\numberwithin{equation}{section}
\newtheorem{theorem}{Theorem}[section]
\newtheorem{lemma}[theorem]{Lemma}
\newtheorem{corollary}[theorem]{Corollary}
\newtheorem{proposition}[theorem]{Proposition}
\theoremstyle{definition}
\newtheorem{definition}[theorem]{Definition}
\newtheorem{remark}[theorem]{Remark}
\theoremstyle{remark}
\newcommand{\C}{\mathbb{C}}
\newcommand{\E}{\mathcal{E}}
\newcommand{\K}{\mathcal{K}}
\newcommand{\R}{\mathbb{R}}
\let\Re=\undefined\DeclareMathOperator*{\Re}{Re}
\let\Im=\undefined\DeclareMathOperator*{\Im}{Im}
\renewcommand{\L}{\mathcal{L}_a}
\newcommand{\eps}{\varepsilon}
\newcommand{\qtq}[1]{\quad\text{#1}\quad}
\newcounter{smalllist}
\begin{document}

\title[NLS with inverse-square potential]{The focusing cubic NLS with inverse-square potential in three space dimensions}

\author[R. Killip]{Rowan Killip}\address{Department of Mathematics, UCLA, Los Angeles, USA}\email{killip@math.ucla.edu}
\author[J. Murphy]{Jason Murphy}\address{Department of Mathematics, University of California, Berkeley, USA}\email{murphy@math.berkeley.edu}
\author[M. Visan]{Monica Visan}\address{Department of Mathematics, UCLA, Los Angeles, USA}\email{visan@math.ucla.edu}
\author[J. Zheng]{Jiqiang Zheng}\address{Universit\'e Nice Sophia-Antipolis, 06108 Nice Cedex 02, France}\email{zhengjiqiang@gmail.com, zheng@unice.fr}

\begin{abstract} We consider the focusing cubic nonlinear Schr\"odinger equation with inverse-square potential in three space dimensions.  We identify a sharp threshold between scattering and blowup, establishing a result analogous to that of Duyckaerts, Holmer, and Roudenko for the standard focusing cubic NLS \cite{DHR, HR}.  We also prove failure of uniform space-time bounds at the threshold.
\end{abstract}

\maketitle

\section{Introduction}
We consider the initial-value problem for the focusing cubic nonlinear Schr\"odinger equation (NLS) with inverse-square potential in three space dimensions.  The associated linear problem is given in terms of the operator
\[
\L := -\Delta + \tfrac{a}{|x|^2}.
\]
Restricting to values $a>-\frac14$, we consider $\L$ as the Friedrichs extension of the quadratic form $Q$, defined on $C_c^\infty(\R^3\backslash\{0\})$ via
\[
Q(f) := \int_{\R^3} |\nabla f(x)|^2 + \tfrac{a}{|x|^2}|f(x)|^2\,dx.
\]
The restriction on $a$ guarantees positivity; indeed, one has
\[
Q(f) = \int_{\R^3} |\nabla f + \tfrac{\sigma x}{|x|^2}f|^2 \,dx, \qtq{where} \sigma := \tfrac12 - \sqrt{\tfrac14 + a}.
\]

For the general theory of such extensions, we refer the reader to \cite[Section X.3]{ReedSimon}; for more on the specific operator $\L$, see \cite{KSWW, KMVZZ1}.  We choose the Friedrichs extension for the following physically-motivated reasons: (i) when $a=0$, $\L$ becomes the usual Laplacian $-\Delta$, and (ii) the Friedrichs extension appears when one takes a scaling limit of more regular potentials; for example,
\[
L_n = -\Delta + \tfrac{an^2}{1+n^2 |x|^2}\to \L \qtq{as} n\to\infty
\]
in the strong resolvent sense, where we understand $L_n$ as having domain $H_x^2(\R^3)$.

By the sharp Hardy inequality, one has
\begin{equation}\label{iso}
Q(f) = \|\sqrt{\L}\, f\|_{L_x^2}^2 \sim \|\nabla f\|_{L_x^2}^2\qtq{for} a>-\tfrac14.
\end{equation}
Thus, the Sobolev space $\dot H_x^1$ defined in terms of the gradient is isomorphic to the space $\dot H_a^1$ defined in terms of $\L$.  The paper \cite{KMVZZ1} determines the sharp range of parameters $s$ and $p$ for which 
$$
\| (\L)^{\frac{s}{2}} f\|_{L_x^p} \sim \| (-\Delta)^{\frac{s}{2}} f\|_{L_x^p} ;
$$
see Lemma~\ref{pro:equivsobolev} below.

We study the following equation:
\begin{equation}\label{nls}\tag{$\text{NLS}_a$}
(i\partial_t - \L)u = -|u|^2 u, \quad (t,x)\in\R\times\R^3
\end{equation}
for initial data in $H_x^1 (\R^3)$.  When $a=0$, \eqref{nls} reduces to the standard focusing cubic NLS:
\begin{equation}\label{nls0}\tag{$\text{NLS}_0$}
(i\partial_t+\Delta) u = -|u|^2 u.
\end{equation}

Like \eqref{nls0}, the equation \eqref{nls} enjoys several symmetries and conservation laws.  Firstly, the class of solutions is invariant under the  rescaling
\begin{equation}\label{scaling}
u(t,x) \mapsto u^\lambda(t,x) : = \lambda u(\lambda^2t, \lambda x),
\end{equation}
which identifies $\dot H_x^{\frac12}(\R^3)$ as the scaling-critical space of initial data.  Secondly, solutions to \eqref{nls} conserve their \emph{mass} and \emph{energy}, defined respectively by
\begin{align*}
& M(u(t)) := \int_{\R^3} |u(t,x)|^2 \,dx, \\
& E_a(u(t)) := \int_{\R^3} \tfrac12|\nabla u(t,x)|^2 + \tfrac{a}{2|x|^2} |u(t,x)|^2 - \tfrac14 |u(t,x)|^4 \,dx.
\end{align*}

Initial data belonging to $H_x^1(\R^3)$ have finite mass and energy, as is evident from \eqref{iso} and the following variant of the Gagliardo--Nirenberg inequality:
\begin{equation}\label{E:GN}
\|f\|_{L_x^4(\R^3)}^4 \leq C_a \|f\|_{L_x^2(\R^3)} \|f\|_{\dot H_a^1(\R^3)}^3,
\end{equation}
where $C_a$ denotes the sharp constant in the inequality above.  That $0<C_a<\infty$ follows from the standard Gagliardo--Nirenberg inequality and \eqref{iso}.  For more on \eqref{E:GN}, see Theorem~\ref{T:GN}.

In contrast to \eqref{nls0}, the equation \eqref{nls} with $a\neq 0$ is not space-translation invariant.  This introduces some of the key challenges in the development of concentration compactness tools and the induction on energy argument, as we discuss below.

We consider the problem of global existence and scattering for \eqref{nls}.  We begin with the following definitions.

\begin{definition}[Solution]\label{D:solution} Let $t_0\in\R$ and $u_0\in H_x^1(\R^3)$. Let $I$ be an interval containing $t_0$. A function $u:I\times\R^3\to\C$ is a \emph{solution} to
\[
(i\partial_t-\L)u = -|u|^2 u, \quad u(t_0)=u_0
\]
if it belongs to $C_t H_a^1\cap L_t^5 H_a^{1,\frac{30}{11}}(K\times\R^3)$ for any compact $K\subset I$ and obeys the Duhamel formula
\[
u(t) = e^{-i(t-t_0)\L}u_0 + i\int_{t_0}^t e^{-i(t-s)\L}\bigl(|u(s)|^2 u(s)\bigr)\,ds\qtq{for all}t\in I,
\]
where we rely on the self-adjointness of $\L$ to make sense of $e^{-it\L}$ via the Hilbert space functional calculus.  We call $I$ the \emph{lifespan} of $u$. We call $u$ a \emph{maximal-lifespan solution} if it cannot be extended to any strictly larger interval. If $I=\R$, we call $u$ \emph{global}.
\end{definition}

Our notion of solution relies on Sobolev spaces adapted to the linear operator $\L$, rather than traditional Fourier expansion; see the beginning of Section~\ref{S:prelim} for the definitions.  This is natural from the point of view of the underlying linear equation; in particular, powers of $\L$ commute with the propagator $e^{-it\L}$.  On the other hand, analysis of the nonlinear problem then requires a version of the Leibniz/product rule adapted to such spaces.  In Theorem~\ref{T:LWP} we show the existence of local solutions to \eqref{nls}, in the sense just described, by exploiting results from \cite{BPSTZ} and \cite{KMVZZ1}.

\begin{definition}[Scattering]  A global solution $u$ to \eqref{nls}  \emph{scatters} if there exist $u_\pm\in H_x^1(\R^3)$ such that
\[
\lim_{t\to\pm\infty} \| u(t) - e^{-it\L}u_{\pm} \|_{H_x^1(\R^3)} = 0.
\]
\end{definition}

Global existence, scattering, and blowup for \eqref{nls0} were studied in \cite{DHR, HR}.  The authors identified a sharp threshold between scattering and blowup, described in terms of the ground state $Q_0$, which is the unique, positive, radial, decaying solution to the elliptic problem
\[
\Delta Q_0 - Q_0 + Q_0^3 = 0.
\]
We state the results of \cite{DHR, HR} as Theorem~\ref{T:DHR} below. Our main result, Theorem~\ref{T:main}, is an analogous threshold result for \eqref{nls}.  To state these results, we make use of the sharp constant in the Gagliardo--Nirenberg inequality \eqref{E:GN}:
\begin{equation}
C_a := \sup\bigl\{ \ \|f\|_{L_x^4}^4 \div \bigl[\|f\|_{L_x^2} \|f\|_{\dot H_a^1}^3\bigr]:f\in H_a^1\backslash\{0\}\ \bigl\}.
\label{E:Ca}
\end{equation}

In Theorem~\ref{T:GN}, we will show (i) for $a\in (-\frac14,0]$, equality is attained in \eqref{E:GN} by a solution $Q_a$ to the elliptic problem
\begin{equation}\label{ell}
-\L Q_a - Q_a + |Q_a|^2Q_a = 0,
\end{equation}
and (ii) for $a>0$, $C_a = C_0$, but equality is never attained in \eqref{E:GN}.

We define the following thresholds:
\begin{equation}\label{thresholds-Q}
\E_a := M(Q_{a\wedge 0})E_{a\wedge 0}(Q_{a\wedge 0}) \qtq{and} \K_a := \|Q_{a\wedge 0}\|_{L_x^2}\|Q_{a\wedge 0}\|_{\dot H_{a\wedge 0}^1}.
\end{equation}
From the Pohozaev identities associated to \eqref{ell} (cf. \eqref{poho} below) and the sharp Gagliardo--Nirenberg inequality \eqref{E:GN}, we compute
\begin{equation}\label{E:threshold}
\E_a = \tfrac{8}{27}C_a^{-2} \qtq{and} \K_a = \tfrac{4}{3} C_a^{-1}.
\end{equation}

Using this notation, the results of  \cite{DHR, HR} may be stated as follows:

\begin{theorem}[Scattering/blowup dichotomy, \cite{DHR, HR}]\label{T:DHR}\text{ }
Let $u_0\in H_x^1(\R^3)$ satisfy $M(u_0)E_0(u_0)<\E_0$.
\begin{itemize}
\item[(i)] If $\|u_0\|_{L_x^2}\| u_0\|_{\dot H_x^1} < \K_0$, then the solution to \eqref{nls0} with initial data $u_0$ is global and scatters.
\item[(ii)] If $\|u_0\|_{L_x^2}\| u_0\|_{\dot H_x^1} > \K_0$ and $u_0$ is radial or $xu_0\in L_x^2(\R^3)$, then the solution to \eqref{nls0} with initial data $u_0$ blows up in finite time in both time directions.
\end{itemize}

Furthermore, if $\psi\in H_x^1(\R^3)$ satisfies $\frac12\|\psi\|_{L_x^2}^2\|\psi\|_{\dot H_x^1}^2< \E_0$, then there exists a global solution to \eqref{nls0} that scatters to $\psi$ forward in time.  The analogous statement holds backward in time.
\end{theorem}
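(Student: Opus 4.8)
I would prove this following the concentration-compactness/rigidity strategy of Kenig--Merle, with a convexity (virial) argument for blowup and a fixed-point construction at temporal infinity for the final assertion; the variational input throughout is the sharp Gagliardo--Nirenberg inequality \eqref{E:GN} with $a=0$ together with the identities \eqref{E:threshold}. The first step is to set up invariant regions. Writing $f(u):=M(u)E_0(u)$ and $g(u):=\|u\|_{L_x^2}\|u\|_{\dot H_x^1}$, applying \eqref{E:GN} to the quartic term of $E_0$ and multiplying by $M(u)$ gives $f(u)\ge\phi(g(u))$, where $\phi(y)=\tfrac12 y^2-\tfrac{C_0}{4}y^3$ attains its maximum value $\E_0$ precisely at $y=\K_0$. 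Hence on $\{f<\E_0\}$ the flow cannot meet $\{g=\K_0\}$, so by conservation of $f$ and continuity of $t\mapsto g(u(t))$ the sets $\{g<\K_0\}$ and $\{g>\K_0\}$ are each flow-invariant. On the first one gets $g(u(t))\le(1-\delta)\K_0$ uniformly, hence by mass conservation a uniform $H_x^1$ bound and global existence; on the second, $g(u(t))\ge(1+\delta)\K_0$ throughout the lifespan.

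For scattering in (i): small-data theory gives scattering when $g(u_0)$ is small, so if it failed somewhere in $\{f<\E_0\}\cap\{g<\K_0\}$ there would be an infimal failure level $(ME)_c<\E_0$. A linear profile decomposition for $H_x^1$-bounded sequences (at fixed mass and energy below threshold neither scaling nor Galilean parameters can run off to infinity, so only spatial translations remain non-compact), fed into the stability theory for \eqref{nls0} and combined with a nonlinear-profile superposition argument, yields a \emph{critical element}: a global, non-scattering solution $u_c$ lying in $\{g<\K_0\}$ at level $(ME)_c$, whose orbit is pre-compact in $H_x^1$ after translation by a path $x(t)$ which a standard argument shows obeys $|x(t)|=o(t)$. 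Rigidity then forces $u_c\equiv0$: the truncated virial functional $V(t)=\int\varphi_R(x-x(t))\,|u_c(t,x)|^2\,dx$, with $\varphi_R(y)=|y|^2$ on $\{|y|\le R\}$ and $\varphi_R$ bounded, is itself bounded by pre-compactness, while the uniform bound $g(u_c(t))\le(1-\delta)\K_0$ together with \eqref{E:GN} makes $V''(t)\ge c>0$ up to errors made small by taking $R$ large (using $|x(t)|=o(t)$ to handle the moving center); a bounded function cannot have $V''$ bounded below by a positive constant. Hence $(ME)_c=\E_0$ and all sub-threshold solutions scatter. This is the crux and the main obstacle: one must develop the profile decomposition and a sufficiently robust stability theory, and controlling the translation path in the rigidity step is exactly where the focusing problem is harder than the defocusing one, there being no useful monotone quantity beyond mass, energy, and momentum.

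For blowup in (ii): when $xu_0\in L_x^2$, set $V(t):=\int|x|^2|u(t,x)|^2\,dx$; the virial identities give $V''(t)=24\,E_0(u_0)-4\|\nabla u(t)\|_{L_x^2}^2$. On $\{g>\K_0\}$ one has $\|\nabla u(t)\|_{L_x^2}^2>\K_0^2/M(u_0)$, and since $\K_0^2=6\,\E_0$ by \eqref{E:threshold}, the hypothesis $f(u_0)<\E_0$ forces $V''(t)\le-c<0$ uniformly; as $V\ge0$, the lifespan is finite in both time directions. For radial data one replaces $|x|^2$ by a smooth truncation agreeing with $|x|^2$ on $\{|x|\le R\}$, controls the error terms via the radial Sobolev inequality and the uniform kinetic lower bound, and argues as in Ogawa--Tsutsumi.

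Finally, for the ``Furthermore'' assertion: given $\psi$ with $\tfrac12\|\psi\|_{L_x^2}^2\|\psi\|_{\dot H_x^1}^2<\E_0$, solve $u(t)=e^{it\Delta}\psi+i\int_\infty^t e^{i(t-s)\Delta}(|u|^2u)(s)\,ds$ by a contraction mapping on $[T,\infty)$ for $T$ large, using smallness of $e^{it\Delta}\psi$ in the relevant Strichartz norm over $[T,\infty)$; this produces $u$ on $[T,\infty)$ with $\|u(t)-e^{it\Delta}\psi\|_{H_x^1}\to0$ as $t\to\infty$. Letting $T\to\infty$ and using dispersive decay of $e^{it\Delta}\psi$ in $L_x^4$, one finds $M(u)=M(\psi)$ and $E_0(u)=\tfrac12\|\psi\|_{\dot H_x^1}^2$, so $f(u)=\tfrac12\|\psi\|_{L_x^2}^2\|\psi\|_{\dot H_x^1}^2<\E_0$; moreover $\tfrac12 g(\psi)^2<\E_0$ forces $g(\psi)<\sqrt{2\E_0}<\K_0$ by \eqref{E:threshold}, so $g(u(T))<\K_0$ for $T$ large. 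Part (i) then extends $u$ to a global solution, which scatters to $\psi$ forward in time by construction; the backward statement is obtained by solving from $-\infty$.
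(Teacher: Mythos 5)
This theorem is not proved in the paper at all: it is the Duyckaerts--Holmer--Roudenko and Holmer--Roudenko result, stated as an imported black box (the paper explicitly says it will ``use the result of \cite{DHR, HR} as a black box''), so there is no internal proof to compare against. Your outline is a faithful reconstruction of the argument of those references and of the analogous arguments the paper runs for general $a$: the invariant-region analysis via the sharp Gagliardo--Nirenberg inequality matches Proposition~\ref{P:coercive} (and your normalizations check out, e.g.\ $\phi(\K_0)=\tfrac{8}{27}C_0^{-2}=\E_0$ and $\K_0^2=6\E_0$); the convexity/truncated-virial blowup argument matches Section~\ref{S:blowup}; the concentration-compactness-plus-rigidity scheme with the moving center $x(t)=o(t)$ is exactly the DHR mechanism (the translation parameter being the one genuinely extra difficulty relative to the paper's own rigidity step in Section~\ref{S:not-exist}, where compactness holds without a moving center); and the wave-operator construction for the ``Furthermore'' clause, including the observation that $\tfrac12 g(\psi)^2<\E_0$ forces $g(\psi)<\K_0/\sqrt3<\K_0$ and that the potential energy vanishes in the limit, is the standard one (cf.\ Theorem~\ref{T:LWP}(iii)). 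As a high-level sketch it is correct; of course the profile decomposition, stability theory, and the control of $x(t)$ are substantial technical components that are only named, not executed, but that is appropriate for a result the paper itself treats as known.
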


Our main result is the following.

\begin{theorem}[Scattering/blowup dichotomy]\label{T:main} Fix $a>-\frac14$.
Let $u_0\in H_x^1(\R^3)$ satisfy $M(u_0)E_a(u_0)<\E_a$.
\begin{itemize}
\item[(i)] If $\|u_0\|_{L_x^2} \|u_0\|_{\dot H_a^1} < \K_a$, then the solution to \eqref{nls} with initial data $u_0$ is global and scatters.
\item[(ii)] If $\|u_0\|_{L_x^2}\|u_0\|_{\dot H_a^1} > \K_a$ and $u_0$ is radial or $x u_0\in L_x^2(\R^3)$, then the solution to \eqref{nls} with initial data $u_0$ blows up in finite time in both time directions.
\end{itemize}
\end{theorem}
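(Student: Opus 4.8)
The plan is to follow the now-standard concentration-compactness/rigidity roadmap of Kenig--Merle, adapted to the operator $\L$, treating the scattering statement (i) and the blowup statement (ii) separately.

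For part (i), the first step is to record the variational characterization of the region $\{M(u_0)E_a(u_0)<\E_a,\ \|u_0\|_{L^2_x}\|u_0\|_{\dot H^1_a}<\K_a\}$: using the sharp Gagliardo--Nirenberg inequality \eqref{E:GN} together with \eqref{E:threshold}, one shows this set is invariant under the flow and that on it the quantity $\|u(t)\|_{L^2_x}\|u(t)\|_{\dot H^1_a}$ stays bounded away from $\K_a$, so that in particular $\|u(t)\|_{\dot H^1_a}$ is uniformly controlled and the energy is coercive. Combined with the local theory of Theorem~\ref{T:LWP} this gives global existence; scattering is the real content. I would introduce the scattering-size functional
\[
\Lambda(m,e) := \sup\{\|u\|_{L^5_tH^{1,30/11}_a(\R\times\R^3)}\}
\]
over solutions with $M(u)=m$, $E_a(u)=e$ in the sub-threshold region, and run an induction-on-(mass$\times$energy) argument: if scattering fails, there is a critical threshold $(m,e)$ and, via a linear profile decomposition adapted to $e^{-it\L}$ (which must be established, using the dispersive/Strichartz estimates for $e^{-it\L}$ from \cite{BPSTZ}) together with a perturbation/stability lemma for \eqref{nls}, one extracts a minimal non-scattering solution whose orbit $\{u(t)\}$ is precompact in $H^1_x$ modulo the symmetries of the equation. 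Because \eqref{nls} is \emph{not} translation invariant, the only surviving symmetry is scaling \eqref{scaling}, so the orbit is precompact modulo scaling; one must then rule out the scaling parameter escaping to $0$ or $\infty$ (the frequency-localized components near or away from the singularity are handled using the Sobolev-norm equivalence Lemma~\ref{pro:equivsobolev} and the fact that $a>-\tfrac14$). The final step is a rigidity/virial argument: using a truncated virial identity built from the Morawetz weight adapted to $\L$ (the cross term involving $a/|x|^2$ has a favorable sign, or is at least controlled), the coercivity from the variational step forces the minimal solution to be identically zero, a contradiction. The scattering of $\psi$-type initial data (not stated in Theorem~\ref{T:main} but present for $a=0$) follows similarly by constructing wave operators.

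For part (ii), I would use the classical Glassey-type convexity argument. For $xu_0\in L^2_x$ one computes, for $V(t):=\int |x|^2|u(t,x)|^2\,dx$,
\[
\partial_t^2 V(t) = 8\Bigl(2E_a(u(t)) - \|u(t)\|_{\dot H^1_a}^2\Bigr) + (\text{error from } a/|x|^2),
\]
but in fact the inverse-square term scales exactly like the kinetic term under dilations, so the virial identity is \emph{clean}: $\partial_t^2 V(t) = 8\,Q_{\mathrm{vir}}(u(t))$ with $Q_{\mathrm{vir}} = \|u\|_{\dot H^1_a}^2 - \tfrac34\|u\|_{L^4_x}^4$ up to constants, exactly as for $-\Delta$ with the gradient replaced by $\sqrt{\L}$. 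The variational analysis (now on the \emph{super}-threshold side $\|u_0\|_{L^2_x}\|u_0\|_{\dot H^1_a}>\K_a$, combined with $M(u_0)E_a(u_0)<\E_a$) shows $Q_{\mathrm{vir}}(u(t))\le -\delta<0$ for all $t$ in the lifespan, so $V$ is strictly concave and positive, forcing finite-time blowup. For radial (but not necessarily finite-variance) data one replaces $|x|^2$ by a smooth truncated weight $\phi_R(|x|)$ equal to $|x|^2$ for $|x|\le R$, controlling the error terms using the radial Sobolev embedding and the uniform-in-time bound on $\|u(t)\|_{\dot H^1_a}$; this is exactly the Ogawa--Tsutsumi/Holmer--Roudenko scheme and goes through with cosmetic changes since $a/|x|^2\ge -\tfrac14|x|^{-2}$ keeps the relevant quadratic forms comparable.

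The main obstacle is the concentration-compactness part of (i): establishing a linear profile decomposition for $e^{-it\L}$ and a stability theory for \eqref{nls} \emph{in the absence of translation symmetry}, and then showing that the scaling parameter of the extracted minimal solution cannot degenerate. This is where the Sobolev space equivalence of \cite{KMVZZ1} (Lemma~\ref{pro:equivsobolev}), together with careful in/out spatial localization relative to the singularity at $x=0$, does the heavy lifting; once a compact minimal solution is in hand, the virial rigidity step is comparatively routine given the clean scaling of the potential.
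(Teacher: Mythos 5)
Your part (ii) is essentially the paper's argument: the clean virial identity for $\L$ (Lemma~\ref{L:virial0}), the coercivity $\|u\|_{\dot H_a^1}^2-\tfrac34\|u\|_{L_x^4}^4\le -c<0$ from the super-threshold variational analysis (Proposition~\ref{P:coercive}b), and the Ogawa--Tsutsumi truncation with radial Sobolev embedding for infinite-variance radial data. That half is fine.

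For part (i) the overall Kenig--Merle roadmap is right, but you have misidentified where the difficulty lies, and the step you gloss over is the one that actually requires a new idea. First, the residual modulation symmetry here is \emph{not} scaling: the data are in $H_x^1$ and both mass and energy are conserved, so the scale is pinned and the linear profile decomposition (Proposition~\ref{P:LPD}) carries only temporal and \emph{spatial translation} parameters $(t_n^j,x_n^j)$; the minimal solution ends up with orbit precompact in $H_x^1$ with no modulation at all. There is no ``scaling parameter escaping to $0$ or $\infty$'' to rule out. Second, and more seriously, the genuine obstruction is the profiles with $|x_n^j|\to\infty$. Since $\L$ is not translation invariant, such a profile does not generate a solution of \eqref{nls} by symmetry, and ``Sobolev-norm equivalence plus in/out localization'' does not produce one: to close the Palais--Smale/nonlinear profile decomposition you must construct, for each far-away profile, a \emph{global scattering} nonlinear solution with uniform space-time bounds. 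The paper does this (Theorem~\ref{T:embedding}) by observing that such profiles see a vanishing potential in the limit, solving the limiting translation-invariant equation \eqref{nls0} using the Duyckaerts--Holmer--Roudenko theorem as a black box, and transferring the solution back to \eqref{nls} via the stability theory. This in turn requires knowing that the thresholds for \eqref{nls} do not exceed those for \eqref{nls0} (Corollary~\ref{C:thresholds}, which rests on the variational analysis of Theorem~\ref{T:GN}, including the fact that $C_a=C_0$ without optimizers for $a>0$). Without this embedding step and the threshold comparison, your induction on $M(u)E_a(u)$ cannot be closed, so as written the proposal has a gap precisely at the heart of the argument.
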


As we will see in Section~\ref{S:var}, the overarching hypothesis $M(u_0)E_a(u_0)<\E_a$ precludes the possibility that $\|u_0\|_{L_x^2} \|u_0\|_{\dot H_a^1} = \K_a$.

For $a\leq 0$, the functions $Q_a$ introduced above provide examples of solutions with $M(u_0)E_a(u_0)=\E_a$ that neither scatter nor blow up; thus, the overarching assumption provides the correct threshold for such a simple dichotomy of behaviors.   A complete description of possible threshold behaviors in the case $a=0$ was given in \cite{DR}.

When $a>0$, there are no optimizers for the Gagliardo--Nirenberg inequality (cf. Theorem~\ref{T:GN} below) and so no soliton solutions with $M(u_0)E_a(u_0)=\E_a$.
Nonetheless, this condition does mark the threshold for uniform space-time bounds.  Specifically, the proof of Theorem~\ref{T:main}(i) will show that such solutions obey
\[
\int_\R \int_{\R^3} |u(t,x)|^5 \,dx\,dt \leq C\bigl( \E_a -M(u_0)E_a(u_0)\bigr)
\]
for some function $C:(0,\E_a)\to(0,\infty)$; however, we can show that the constant here necessarily diverges as one approaches the threshold: 

\begin{theorem}[Failure of uniform space-time bounds at the threshold]\label{T:threshold} Let $a>0$.  There exists a sequence of global solutions $u_n:\R\times\R^3\to\C$ such that
\[
M(u_n)E_a(u_n)\nearrow \E_a\qtq{and} \|u_n(0)\|_{L_x^2} \|u_n(0)\|_{\dot H_a^1} \nearrow \K_a,
\]
with
\[
\lim_{n\to\infty} \|u_n\|_{L_{t,x}^5(\R\times\R^3)} = \infty.
\]
\end{theorem}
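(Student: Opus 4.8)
\emph{Strategy.} The plan is to take $u_n$ to be the solution to \eqref{nls} whose initial data is a slightly subcritical, far-translated copy of the ground state $Q_0$ of \eqref{nls0}. Sending the translation parameter to spatial infinity renders the inverse-square potential negligible on the relevant scales, so on any fixed window $[-T,T]$ the solution $u_n$ is well approximated by a translate of the soliton $e^{it}Q_0$; the latter contributes $\sim T^{1/5}\|Q_0\|_{L^5_x}$ to the $L^5_{t,x}$-norm, which can be made as large as we please while keeping the data below the threshold (the subcriticality being bought at order $(1-\mu_n)^2$ while the potential costs only $O(|x_n|^{-2})$).

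\emph{The data.} Recall that $C_a=C_0$ for $a>0$, so $\E_a=\E_0$ and $\K_a=\K_0$, and that the Pohozaev identities \eqref{poho} for $Q_0$ give $\|\nabla Q_0\|_{L^2_x}^2=3\|Q_0\|_{L^2_x}^2$ and $\|Q_0\|_{L^4_x}^4=4\|Q_0\|_{L^2_x}^2$, whence $M(Q_0)E_0(Q_0)=\E_a$ and $\|Q_0\|_{L^2_x}\|Q_0\|_{\dot H^1_x}=\K_a$. Fix $\mu_n\nearrow1$ and set $u_n(0,x):=\mu_n Q_0(x-x_n)$ with $|x_n|\to\infty$ to be chosen. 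Writing $\delta_n:=\int_{\R^3}|x|^{-2}Q_0(x-x_n)^2\,dx\lesssim|x_n|^{-2}$ (using the exponential decay of $Q_0$), one computes
\[
M(u_n(0))E_a(u_n(0))=\mu_n^4\|Q_0\|_{L^2_x}^4\bigl(\tfrac32-\mu_n^2\bigr)+O(\delta_n),\qquad\|u_n(0)\|_{L^2_x}\|u_n(0)\|_{\dot H^1_a}=\sqrt3\,\mu_n^2\|Q_0\|_{L^2_x}^2+O(\delta_n).
\]
The leading coefficients are strictly increasing on $(0,1)$ with limits $\tfrac12$ and $1$, and $\tfrac12-\mu_n^4(\tfrac32-\mu_n^2)\gtrsim(1-\mu_n)^2$, so taking $|x_n|$ large enough that $\delta_n\ll(1-\mu_n)^2$ yields $M(u_n(0))E_a(u_n(0))<\E_a$ and $\|u_n(0)\|_{L^2_x}\|u_n(0)\|_{\dot H^1_a}<\K_a$, with both tending up to the thresholds (monotonically, after passing to a subsequence). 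By Theorem~\ref{T:main}(i) each $u_n$ is then global and scatters; in particular $\|u_n\|_{L^5_{t,x}(\R\times\R^3)}<\infty$.

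\emph{The lower bound.} Fix $T>0$ and let $\chi_n\in C^\infty_c(\R^3)$ equal $1$ on $\{|x|\ge|x_n|/2\}$, vanish on $\{|x|\le|x_n|/4\}$, with $|\nabla^j\chi_n|\lesssim|x_n|^{-j}$. Since $e^{it}Q_0(\cdot-x_n)$ solves \eqref{nls0} (it is a translated soliton), the function $\tilde u_n(t,x):=\chi_n(x)\,e^{it}Q_0(x-x_n)$ is an approximate solution of \eqref{nls}: writing $(i\partial_t-\L)\tilde u_n=-|\tilde u_n|^2\tilde u_n+e_n$, the pieces of $e_n$ arising from derivatives of $\chi_n$ are supported in $\{|x_n|/4\le|x|\le|x_n|/2\}$, hence exponentially small in $|x_n|$ by the decay of $Q_0$, while $a|x|^{-2}\chi_n\,e^{it}Q_0(\cdot-x_n)$ is $O(|x_n|^{-2})$ uniformly in $t$ because $\chi_n/|x|^2\lesssim|x_n|^{-2}$ on $\supp\chi_n$; thus $\|e_n\|_{L^1_t([-T,T];H^1_x)}\to0$. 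Likewise $\|\tilde u_n(0)-u_n(0)\|_{H^1_x}\le(1-\mu_n)\|Q_0\|_{H^1_x}+o(1)\to0$, while $\tilde u_n$ satisfies the a priori bounds $\|\tilde u_n\|_{L^5_{t,x}([-T,T]\times\R^3)}=(2T)^{1/5}(\|Q_0\|_{L^5_x}+o(1))$ and $\|\tilde u_n\|_{L^5_t H_a^{1,\frac{30}{11}}([-T,T]\times\R^3)}\lesssim_T 1$ (the $\L$- and $(-\Delta)$-based Sobolev norms being comparable, uniformly in $n$, on functions supported in $\{|x|\gtrsim|x_n|\}$; cf.~Lemma~\ref{pro:equivsobolev}). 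The stability theory for \eqref{nls} — the perturbative machinery underlying Theorem~\ref{T:main} — then gives, for each fixed $T$, $\|u_n-\tilde u_n\|_{L^5_{t,x}([-T,T]\times\R^3)}\to0$, so that
\[
\|u_n\|_{L^5_{t,x}(\R\times\R^3)}\ge\|u_n\|_{L^5_{t,x}([-T,T]\times\R^3)}\ge(2T)^{1/5}\|Q_0\|_{L^5_x}-o(1)\qquad(n\to\infty).
\]
Since $T>0$ is arbitrary, $\|u_n\|_{L^5_{t,x}(\R\times\R^3)}\to\infty$, as desired.

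\emph{Expected obstacle.} The substantive step is the comparison $u_n\approx\tilde u_n$ on $[-T,T]$. It needs the stability theory for \eqref{nls} in the $\L$-adapted Strichartz spaces and, more delicately, control of the potential error term: the unregularized object $a|x|^{-2}e^{it}Q_0(\cdot-x_n)$ is not even locally $L^2_x$ near the origin, so the cut-off $\chi_n$ is genuinely needed, and one must verify that excising the region $\{|x|\lesssim|x_n|\}$ is harmless — which it is, precisely because $Q_0$ decays exponentially and that region carries only exponentially little mass and energy. An alternative is to appeal to the analysis of nonlinear profiles escaping to spatial infinity developed for the concentration-compactness argument, which already encodes the fact that such profiles evolve by \eqref{nls0}.
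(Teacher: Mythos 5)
Your proposal is correct and follows essentially the same route as the paper: initial data given by slightly shrunk, far-translated copies of $Q_0$, an approximate solution built from the cut-off soliton $e^{it}\chi_nQ_0(\cdot-x_n)$, smallness of the error (potential term $O(|x_n|^{-2})$, cutoff terms exponentially small), and the stability theorem on $[-T,T]$ for arbitrary $T$. Your quantitative balancing of $\delta_n\ll(1-\mu_n)^2$ to keep the data strictly below threshold is a slightly more explicit version of the paper's appeal to \eqref{poho} and \eqref{coo4}, and omitting the factor $\mu_n$ from $\tilde u_n$ (absorbing it into the initial-data discrepancy) versus the paper's choice of keeping $(1-\eps_n)$ in the ansatz is immaterial.
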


The non-existence of optimizers to the Gagliardo--Nirenberg inequality for $a>0$ is a consequence of the failure of compactness due to translations.  If we restrict attention to the Gagliardo--Nirenberg inequality for radial functions, however, the compactness is restored.  To make this precise, we define
\begin{equation}
C_{a,\text{rad}} := \sup\bigl\{ \|f\|_{L_x^4}^4 \div \bigl[\|f\|_{L_x^2} \|f\|_{\dot H_a^1}^3\bigr]:f\in H_a^1\backslash\{0\},\ f\text{ radial}\bigl\}.
\label{E:Carad}
\end{equation}
The proof of Theorem~\ref{T:GN} shows that $C_a = C_{a,\text{rad}}$ for $a\leq 0$.  For $a>0$, we will show that the sharp constant $C_{a,\text{rad}}$ is attained by a radial solution $Q_{a,\text{rad}}$ to \eqref{ell}. However, as the constant $C_a$ defined in \eqref{E:Ca} is \emph{not} attained (cf. Theorem~\ref{T:GN} below), we must have that $C_{a,\text{rad}}<C_a$.

For $a>0$, we define the thresholds
\[
\E_{a,\text{rad}} = \tfrac{8}{27} C_{a,\text{rad}}^{-2} \qtq{and} \K_{a,\text{rad}} = \tfrac43 C_{a,\text{rad}}^{-1},
\]
which are related to $Q_{a,\text{rad}}$ as before.  The functions $Q_{a,\text{rad}}$ provide examples of non-scattering solutions at the radial threshold via $u(t,x) = e^{it}Q_{a,\text{rad}}(x)$, and we obtain the following threshold result for the class of radial solutions:

\begin{theorem}[Radial scattering/blowup dichotomy]\label{T:radial} Fix $a>0$.

Let $u_0\in H_x^1(\R^3)$ be radial and satisfy $M(u_0)E_a(u_0)<\E_{a,\text{rad}}$.
\begin{itemize}
\item[(i)] If $\|u_0\|_{L_x^2} \|u_0\|_{\dot H_a^1} <\K_{a,\text{rad}}$, then the solution to \eqref{nls} with initial data $u_0$ is global and scatters.
\item[(ii)] If $\|u_0\|_{L_x^2} \|u_0\|_{\dot H_a^1} > \K_{a,\text{rad}}$, then the solution to \eqref{nls} with initial data $u_0$ blows up in finite time.
\end{itemize}
\end{theorem}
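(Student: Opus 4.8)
The plan is to run the proof of Theorem~\ref{T:main} again, now restricted throughout to the class of radial $H_x^1$ solutions, where for $a>0$ the inverse-square potential restores the compactness that spatial translations otherwise destroy; I will only indicate the points that differ. The starting point is the variational analysis. Replacing $C_a$, $Q_a$, $\E_a$, $\K_a$ everywhere by $C_{a,\text{rad}}$, $Q_{a,\text{rad}}$, $\E_{a,\text{rad}}$, $\K_{a,\text{rad}}$ -- which is legitimate since, by the discussion preceding the theorem, $Q_{a,\text{rad}}$ exists and $C_{a,\text{rad}}<C_a$ -- the Pohozaev and sharp Gagliardo--Nirenberg bookkeeping behind \eqref{E:threshold} carries over verbatim on the radial class, and yields the usual trichotomy: for a radial solution $u$ with $M(u)E_a(u)<\E_{a,\text{rad}}$, the quantity $\|u(t)\|_{L_x^2}\|u(t)\|_{\dot H_a^1}$ never equals $\K_{a,\text{rad}}$, hence stays on one side of it along the lifespan (by conservation of mass and energy, \eqref{E:GN} for radial functions, and continuity). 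In case (i) it stays below $\K_{a,\text{rad}}$, giving a coercivity $4\|u(t)\|_{\dot H_a^1}^2-3\|u(t)\|_{L_x^4}^4\geq\delta\|u(t)\|_{\dot H_a^1}^2$ for some $\delta(u_0)>0$, hence $E_a(u)\sim\|u(t)\|_{\dot H_a^1}^2$, hence $\sup_t\|u(t)\|_{H_x^1}<\infty$ and global existence. In case (ii) it stays above $\K_{a,\text{rad}}$, giving the reverse inequality $4\|u(t)\|_{\dot H_a^1}^2-3\|u(t)\|_{L_x^4}^4\leq-\delta$ together with a uniform lower bound on $\|u(t)\|_{\dot H_a^1}$.

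For part (i) I would argue by contradiction via concentration-compactness and rigidity. From the local theory of Theorem~\ref{T:LWP} together with small-data and stability arguments in the $\L$-adapted Strichartz spaces of Definition~\ref{D:solution}, there is a critical level $(\mathcal{ME})_c$ below which every radial sub-threshold solution is global with finite $L_{t,x}^5$ norm; if scattering fails, $(\mathcal{ME})_c<\E_{a,\text{rad}}$. The key structural point is the radial linear profile decomposition adapted to $e^{-it\L}$: it carries no spatial-translation parameter by radiality, and -- because a minimizing sequence is bounded in $H_x^1$ while the $\dot H_x^{1/2}$-critical rescaling $\phi\mapsto\lambda\phi(\lambda\,\cdot)$ distorts the $L_x^2$ and $\dot H_x^1$ norms in opposite directions -- the scaling parameters are also pinned to a compact set. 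Consequently every profile is a genuine $\L$-profile; in particular no ``free'' profile arises, so, unlike in the proof of Theorem~\ref{T:main}, no comparison against the smaller, non-radial threshold $\E_0$ of Theorem~\ref{T:DHR} is needed. Decoupling of mass, of energy (Brezis--Lieb), and of the $\dot H_a^1$-norm (via \eqref{iso}) then places each profile strictly below $(\mathcal{ME})_c$ and below the radial norm-product threshold, so the induction hypothesis controls the genuine profiles while the dispersive decay of $e^{-it\L}$ handles profiles concentrating near $t=\pm\infty$; stability then forces a single surviving profile, yielding a global radial solution $u_c$ with $M(u_c)E_a(u_c)=(\mathcal{ME})_c$, infinite $L_{t,x}^5$ norm, and orbit precompact in $H_x^1$. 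To exclude $u_c$ I would use the truncated virial $V_R(t)=\int_{\R^3}\chi_R(x)|u_c(t,x)|^2\,dx$, with $\chi_R(x)=R^2\chi(x/R)$ and $\chi(x)=|x|^2$ for $|x|\le1$: one computes $V_R''(t)=8\|u_c(t)\|_{\dot H_a^1}^2-6\|u_c(t)\|_{L_x^4}^4+\mathrm{Err}_R(t)$, where the inverse-square term leaves no remainder on $\{|x|\le R\}$ (it is scale-invariant, hence behaves under dilations exactly as $-\Delta$ and is absorbed into $\|u_c\|_{\dot H_a^1}^2$) and $|\mathrm{Err}_R(t)|\to0$ uniformly in $t$ as $R\to\infty$ by compactness of the orbit. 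Combined with the sub-threshold coercivity and the bound $\|u_c(t)\|_{\dot H_a^1}\gtrsim1$ (from compactness and nontriviality), this forces $V_R''(t)\geq c>0$ uniformly for $R$ large, contradicting the boundedness of $V_R'$.

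For part (ii), radiality lets me drop the hypothesis $xu_0\in L_x^2$ and run Glassey's convexity argument in the truncated form of Ogawa--Tsutsumi. With $V_R$ as above, $V_R''(t)=8\|u(t)\|_{\dot H_a^1}^2-6\|u(t)\|_{L_x^4}^4+\mathrm{Err}_R(t)$, where $\mathrm{Err}_R(t)$ is supported in $\{|x|\gtrsim R\}$ and, by the radial Sobolev (Strauss) inequality together with conservation of mass, is dominated by $\eta\|u(t)\|_{\dot H_a^1}^2+C_\eta R^{-2}$ for any $\eta>0$; once again the inverse-square potential contributes nothing on $\{|x|\le R\}$ and only an $O(R^{-2})$ term from $\{|x|>R\}$, by scaling. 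Rewriting the main term as $8\|u(t)\|_{\dot H_a^1}^2-6\|u(t)\|_{L_x^4}^4=-4\|u(t)\|_{\dot H_a^1}^2+24E_a(u_0)$ and feeding in the super-threshold coercivity (which gives both $-4\|u(t)\|_{\dot H_a^1}^2+24E_a(u_0)\leq-\delta'$ uniformly and a uniform lower bound on $\|u(t)\|_{\dot H_a^1}$), one sees that choosing $\eta$ small and then $R$ large yields $V_R''(t)\leq-\tfrac{\delta'}{2}<0$ uniformly on the lifespan; since $V_R\geq0$, the lifespan must be finite.

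The step I expect to be the main obstacle is not the variational or virial analysis -- which is in fact favorable here, since the radial optimizer $Q_{a,\text{rad}}$ exists, compactness holds on the radial class, and the scale-invariant potential enters the virial identities with the harmless sign -- but the construction of the radial linear profile decomposition adapted to $e^{-it\L}$ and the attendant nonlinear-profile and stability analysis in the $\L$-adapted Strichartz spaces, together with verifying that the induction on mass-energy closes. Much of this duplicates machinery needed for Theorem~\ref{T:main}; the one genuinely new point to nail down is that the scaling parameters of the profiles remain in a compact set, so that the argument never leaves the radial $\L$-world and never has to confront the lower threshold $\E_0$ -- a fact resting on the interplay between $H_x^1$-boundedness and the $\dot H_x^{1/2}$-criticality of \eqref{nls}.
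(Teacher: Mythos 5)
Your overall strategy is the paper's: rerun the variational analysis, blowup argument, concentration compactness, and rigidity within the radial class, using $Q_{a,\text{rad}}$, $\E_{a,\text{rad}}$, $\K_{a,\text{rad}}$ in place of the non-radial quantities, and exploit the fact that once no profile escapes to spatial infinity you never need Theorem~\ref{T:embedding} or the comparison with the smaller thresholds $\E_0,\K_0$. The blowup part and the rigidity part are fine and match the paper.

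The gap is in the one place where the paper actually has to do new work. You assert that the radial linear profile decomposition ``carries no spatial-translation parameter by radiality,'' and then identify the ``genuinely new point to nail down'' as compactness of the \emph{scaling} parameters. This is a misdiagnosis on both counts. First, the profile decomposition of Proposition~\ref{P:LPD} has no scaling parameters to begin with: it is adapted to the $H_x^1\to L_{t,x}^5$ Strichartz inequality, and the inverse Strichartz argument (Proposition~\ref{P:IS}) already pins the frequency scale to a fixed dyadic $N_*$ with $(\eps/A)^5\lesssim N_*\lesssim (A/\eps)^5$; your heuristic about the $\dot H^{1/2}$-criticality versus $H^1$-boundedness is essentially what is already built in there. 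Second, and more importantly, radiality does \emph{not} formally kill the translation parameters: the inverse Strichartz step extracts a concentration point $x_n$ from a pointwise lower bound on $e^{-i\tau_n\L}P_*f_n$, and a radial sequence bounded in $H_x^1$ can a priori concentrate on annuli $\{|x|\sim R_n\}$ with $R_n\to\infty$. If that happened, the extracted profile (a weak limit of the translates $f_n(\cdot+x_n)$) would not be radial, you would be forced back into the $|x_n^j|\to\infty$ regime, and---since $\E_{a,\text{rad}}>\E_0$ and $\K_{a,\text{rad}}>\K_0$---Theorem~\ref{T:embedding} would no longer apply, which is exactly the obstruction the radial theorem must circumvent. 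What is needed, and what the paper supplies, is a quantitative bound on $|x_n|$: by Strichartz, the radial Sobolev (Strauss) embedding, and the equivalence of Sobolev spaces,
\[
\|e^{-it\L}f_n\|_{L_{t,x}^5(\R\times\{|x|>\rho\})}\lesssim \rho^{-\frac13}A,
\]
so that for $\rho=C(A/\eps)^{15/2}$ the exterior contribution is $\ll\eps(\eps/A)^{3/2}$ and the concentration points must satisfy $|x_n|\lesssim\rho$; one may then set $x_n\equiv 0$. You do invoke the Strauss inequality elsewhere (in the truncated virial), but it must also be deployed here---this is the single genuinely new estimate in the proof of Theorem~\ref{T:radial}(i), and without it your claim that every profile stays in the ``radial $\L$-world'' is unjustified.
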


In particular, as $C_{a,\text{rad}}<C_a$ for $a>0$, the class of radial solutions enjoys strictly larger thresholds for scattering in this case.  Evidently, for $a>0$, there are no radial functions $\phi$ satisfying $M(\phi)E_a(\phi)<\E_a$ and $\K_a < \|\phi\|_{L_x^2} \|\phi\|_{\dot H_a^1} < \K_{a,\text{rad}}$, for in this case, Theorem~\ref{T:main} and Theorem~\ref{T:radial} would contradict one another.

The proof of Theorem~\ref{T:main} comprises most of the paper.  The sharp Gagliardo--Nirenberg inequality \eqref{E:GN}, which we discuss in Section~\ref{S:var}, plays an important role. Virial-type identities also feature heavily in the proof; cf. Section~\ref{S:virial} below.

Part (ii) of Theorem~\ref{T:main} (blowup above the threshold) follows along fairly standard lines (cf. \cite{DHR, Glassey, HR, OT}).  In particular, we combine virial identities with the coercivity stemming from the sharp Gagliardo--Nirenberg inequality (cf. Proposition~\ref{P:coercive}).  We carry out the details in Section~\ref{S:blowup}.

For part (i) of Theorem~\ref{T:main} (scattering below the threshold), we take the concentration compactness approach to induction on energy and argue by contradiction (see \cite{Bourg, CKSTT, KM} for some of the pioneering results).  Supposing the theorem were false, we deduce the existence of a threshold $\E_c \in (0,\E_a)$, which is strictly smaller than the one appearing in Theorem~\ref{T:main}.  We then construct a blowup solution living at this threshold and show that its orbit must be pre-compact in $H_x^1(\R^3)$; this is the content of Theorem~\ref{T:exist}.  Combining this compactness with the virial identity, the sharp Gagliardo--Nirenberg inequality, and the fact that $\E_c<\E_a$, we then deduce a contradiction (see Theorem~\ref{T:not-exist}).

The main component in the proof of Theorem~\ref{T:main}(i) lies in the construction of a minimal blowup solution (Theorem~\ref{T:exist}).  The general strategy is well-established: Firstly, we prove a linear profile decomposition adapted to the $H_x^1\to L_{t,x}^5$ Strichartz inequality for $e^{-it\L}$ (Proposition~\ref{P:LPD}).  Secondly, we prove a Palais--Smale condition for optimizing sequences of initial data (Proposition~\ref{P:PS}).  We refer the reader to \cite{KV-clay, Oberwolfach} for an introduction to these techniques.

The proof of Proposition~\ref{P:LPD} also follows a familiar path: we prove an inverse Strichartz inequality (Proposition~\ref{P:IS}) in order to extract the profiles, each with its own scaling and spatial and temporal translation parameters.  The failure of space-translation symmetry introduces a few wrinkles, related to the convergence of certain linear operators that appear in the argument.  These issues were addressed already in \cite{KMVZZ2}, which treated the energy-critical NLS with inverse-square potential. We import the results we need in Section~\ref{S:coo}.

The main new challenge related to the presence of the potential in \eqref{nls} appears in the proof of the Palais--Smale condition, Proposition~\ref{P:PS}.  The key step is to establish a `nonlinear profile decomposition'; more precisely, given a sequence of optimizing initial data $u_n(0)$, we show that the corresponding solutions can be written approximately as the sum of the nonlinear evolutions of the profiles appearing in the linear profile decomposition for $u_n(0)$.  One expects that the profiles living far from the spatial origin will not be strongly affected by the potential and hence these profiles should be modeled by solutions to \eqref{nls0}, rather than \eqref{nls}.  We make this heuristic precise in Theorem~\ref{T:embedding}. In particular, using the result of \cite{DHR, HR} as a black box and invoking the stability theory for \eqref{nls} (cf. Theorem~\ref{T:stab} below), we construct solutions to \eqref{nls} associated to profiles living far from the origin. To invoke the results of \cite{DHR, HR} requires that the thresholds for \eqref{nls} do not exceed those for \eqref{nls0}; we prove this in Corollary~\ref{C:thresholds} below.

The proofs of Theorems~\ref{T:threshold}~and~\ref{T:radial}, which comprise Section~\ref{S:threshold}, are comparatively quick.  For Theorem~\ref{T:threshold}, we argue as in \cite{KVZ}, choosing a sequence of translates of the ground state for \eqref{nls0} as initial data and appealing to the stability result, Theorem~\ref{T:stab}. For Theorem~\ref{T:radial}, the main arguments are the same as those used to prove Theorem~\ref{T:main}.  Hence we provide only a sketch, pointing out the few places where the radial assumption comes into play.

Several previous works have treated dispersive equations with broken symmetries; see, for example, \cite{IP1, IP2, IPS, Jao1, Jao2, KKSV, KMVZZ2, KOPV, KSV, KVZ0, KVZ, PTW}. As in these works, we see that in order to treat a problem with broken symmetries, one needs a good understanding of the limiting problem in which the symmetries are restored.  In our case, the limiting problem is \eqref{nls0}, which was studied in \cite{DHR, HR}.  See also \cite{KVZ}, which studied the focusing, cubic NLS in the exterior of a convex obstacle, for which \eqref{nls0} is again the relevant limiting problem, as well as \cite{KMVZZ2}, which also considered NLS with an inverse-square potential.

The rest of the paper is organized as follows:  In Section~\ref{S:prelim}, we collect some useful lemmas, including some harmonic analysis tools from \cite{KMVZZ1} related to the operator $\L$.  We also establish some local theory and stability results, namely, Theorem~\ref{T:LWP} and Theorem~\ref{T:stab}.  In Section~\ref{S:var}, we carry out the variational analysis for the sharp Gagliardo--Nirenberg inequality.  In Section~\ref{S:blowup}, we prove part (ii) of Theorem~\ref{T:main}, establishing blowup above the threshold.

Section~\ref{S:cc} contains the proof of the linear profile decomposition, Proposition~\ref{P:LPD}.  Section~\ref{S:embed} contains the proof of Theorem~\ref{T:embedding}, in which we construct nonlinear solutions associated to profiles living far from the origin.  In Section~\ref{S:exist}, we prove Theorem~\ref{T:exist}, which asserts that the failure of Theorem~\ref{T:main} implies the existence of minimal blowup solutions.  In Section~\ref{S:not-exist}, we show that such solutions cannot exist, thus completing the proof of Theorem~\ref{T:main}.

Finally, in Section~\ref{S:threshold}, we prove Theorems~\ref{T:threshold}~and~\ref{T:radial}.

\subsection*{Acknowledgements} R.K. was supported by the NSF grant DMS-1265868.  J.M. was supported by the NSF Postdoctoral Fellowship DMS-1400706.  M.V. was supported by the NSF grant DMS-1500707.  J.Z. was partly supported by the European Research Council, ERC-2012-ADG, project number 320845: Semi-Classical Analysis of Partial Differential Equations.  The work on this project was supported in part by NSF grant DMS-1440140, while the authors were in residence at the Mathematical Sciences Research Institute in Berkeley, California, during the Fall 2015 semester.

\section{Preliminaries}\label{S:prelim}

We begin by introducing some notation. For non-negative quantities $X$ and $Y$, we write $X\lesssim Y $ or $X=O(Y)$ when $X\leq CY$ for some $C>0$.  We write $A\wedge B = \min\{A,B\}$,  $A\vee B =\max\{A, B\}$,  and $\langle x\rangle =\sqrt{1+|x|^2}.$

For $1< r < \infty$, we write $\dot H^{s,r}_a(\R^3)$ and $H^{s,r}_a(\R^3)$ for the homogeneous and inhomogeneous Sobolev spaces associated with $\mathcal{L}_a$, respectively, which have norms
\[
\|f\|_{\dot H^{s,r}_a(\R^3)}= \|(\mathcal{L}_a)^{\frac{s}2} f\|_{L_x^r(\R^3)} \qtq{and} \|f\|_{H^{s,r}_a(\R^3)}= \|(1+ \mathcal{L}_a)^\frac{s}2 f\|_{L_x^r(\R^3)}.
\]
We abbreviate $\dot H^{s}_a(\R^3)=\dot H^{s,2}_a(\R^3)$ and $H^{s}_a(\R^3)=H^{s,2}_a(\R^3)$.  We use the notation
\[
\|u\|_{L_t^q L_x^r(I\times\R^3)} = \biggl(\int_I \biggl( \int_{\R^3} |u(t,x)|^r \,dx \biggr)^{\frac{q}{r}}\,dt\biggr)^{\frac{1}{q}},
\]
with the usual modifications if $q$ or $r$ equals $\infty$.

\subsection{Harmonic analysis for $\L$} In this section, we collect some harmonic analysis tools adapted to the operator $\L$.  The primary reference for this section is \cite{KMVZZ1}.

Some of the results admit generalizations to dimensions $d\geq 3$.   By the sharp Hardy inequality, the operator $\L$ is positive precisely for $a\geq -(\frac{d-2}2)^2$.  Many results have clearer formulations when written in terms of the parameter
\[
\sigma:=\tfrac{d-2}2-\bigr[\bigl(\tfrac{d-2}2\bigr)^2+a\bigr]^{\frac12}.
\]

Estimates on the heat kernel associated to the operator $\mathcal{L}_a$ were found by Liskevich--Sobol \cite{LS} and Milman--Semenov \cite{MS}.

\begin{lemma}[Heat kernel bounds, \cite{LS, MS}] \label{L:kernel}  Let $d\geq 3$ and $a\geq -(\tfrac{d-2}{2})^2$. There exist positive constants $C_1,C_2$ and $c_1,c_2$ such that for any $t>0$ and any $x,y\in\R^d\backslash\{0\}$,
\[
C_1(1\vee\tfrac{\sqrt{t}}{|x|})^\sigma(1\vee\tfrac{\sqrt{t}}{|y|})^\sigma t^{-\frac{d}{2}} e^{-\frac{|x-y|^2}{c_1t}} \leq e^{-t\L}(x,y) \leq
C_2(1\vee\tfrac{\sqrt{t}}{|x|})^\sigma(1\vee\tfrac{\sqrt{t}}{|y|})^\sigma t^{-\frac{d}{2}} e^{-\frac{|x-y|^2}{c_2t}}.
\]
\end{lemma}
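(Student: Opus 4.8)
The plan is to follow the desingularizing-weight argument of \cite{LS, MS}, which I sketch here. Since $\L$ is homogeneous of degree two under dilations, one has $e^{-t\L}(x,y) = t^{-d/2}\,e^{-\L}(t^{-1/2}x,\,t^{-1/2}y)$, so it suffices to treat $t=1$; there the weight $(1\vee\tfrac{\sqrt{t}}{|x|})^\sigma$ reads $(|x|\wedge 1)^{-\sigma}$.

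The central device is the ground state (or desingularizing weight) transformation. A direct computation --- using only the identity $\sigma^2-(d-2)\sigma=a$, which merely restates the definition of $\sigma$ --- shows that $h(x):=|x|^{-\sigma}$ is a positive solution of $\L h=0$. The substitution $f=hg$ is an isometry of $L^2(\R^d,\,h^2\,dx)=L^2(\R^d,\,|x|^{-2\sigma}\,dx)$ onto $L^2(\R^d,\,dx)$, and an integration by parts, again invoking $\sigma^2-(d-2)\sigma=a$ to cancel every zeroth-order contribution, gives
\[
Q(hg) = \int_{\R^d} |\nabla g(x)|^2\,|x|^{-2\sigma}\,dx.
\]
Thus $\L$ is unitarily equivalent to the potential-free divergence-form operator $\mathcal{A}$ on $L^2(d\mu)$, $d\mu:=|x|^{-2\sigma}\,dx$, associated with the Dirichlet form above; writing $p_t^{\mathcal{A}}$ for the heat kernel of $\mathcal{A}$ relative to $\mu$, this yields
\[
e^{-t\L}(x,y) = |x|^{-\sigma}\,|y|^{-\sigma}\,p_t^{\mathcal{A}}(x,y).
\]

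Next one invokes the classical equivalence (due to Grigor'yan and to Saloff-Coste) between two-sided Li--Yau Gaussian heat kernel bounds and the conjunction of volume doubling with a scale-invariant $L^2$ Poincar\'e inequality --- equivalently, the parabolic Harnack inequality --- on the metric measure space $(\R^d,|\cdot|,\mu)$. Since $2\sigma\le d-2<d$, the power weight $|x|^{-2\sigma}$ is doubling, with $\mu(B(x,r))\asymp r^d\,(|x|\vee r)^{-2\sigma}$, and the requisite Poincar\'e inequality for such weights is standard. Hence
\[
p_t^{\mathcal{A}}(x,y) \asymp \mu\bigl(B(x,\sqrt{t})\bigr)^{-1}\, e^{-|x-y|^2/(ct)},
\]
with different constants and different $c$ in the upper and lower bounds. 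Symmetrizing the prefactor by doubling --- replacing $\mu(B(x,\sqrt{t}))^{-1}$ by $\bigl[\mu(B(x,\sqrt{t}))\,\mu(B(y,\sqrt{t}))\bigr]^{-1/2}$, the discrepancy being a power of $1+|x-y|/\sqrt{t}$ that is absorbed into the exponential --- and then using $\mu(B(x,\sqrt{t}))\asymp t^{d/2}(|x|\vee\sqrt{t})^{-2\sigma}$ together with $|x|^{-\sigma}(|x|\vee\sqrt{t})^{\sigma}=(1\vee\tfrac{\sqrt{t}}{|x|})^{\sigma}$ (and its analogue in $y$), the relation $e^{-t\L}(x,y)=|x|^{-\sigma}|y|^{-\sigma}p_t^{\mathcal{A}}(x,y)$ turns into precisely the asserted two-sided bound.

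The main obstacle is establishing the Poincar\'e (or parabolic Harnack) inequality for $(\R^d,|\cdot|,|x|^{-2\sigma}\,dx)$ uniformly across the whole range $a\ge-(\tfrac{d-2}{2})^2$: this is routine while $|x|^{-2\sigma}$ lies in the Muckenhoupt class $A_2$, i.e.\ for $|\sigma|<d/2$, but the regime of large positive $a$ --- where $-2\sigma\ge d$ and the weight leaves $A_2$ --- requires a more hands-on analysis near the singularity. An alternative route, closer to \cite{LS}, sidesteps the metric-measure-space machinery: one proves weighted on-diagonal $L^1\to L^\infty$ smoothing bounds via weighted Nash inequalities and perturbation, then obtains the Gaussian off-diagonal \emph{upper} bound by Davies' exponential-perturbation method and recovers the off-diagonal \emph{lower} bound by a Harnack-chaining argument. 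Either way, the crux is securing \emph{weighted} smoothing estimates that correctly detect the behavior of the kernel at $x=0$.
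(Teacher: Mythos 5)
The paper does not prove this lemma: it is imported verbatim from Liskevich--Sobol \cite{LS} and Milman--Semenov \cite{MS}, so there is no internal proof to compare against. Judged on its own terms, your sketch is an accurate account of the method those references use (indeed, ``desingularizing weights'' is the very mechanism in \cite{MS}): the scaling reduction to $t=1$ is correct, $h(x)=|x|^{-\sigma}$ is genuinely $\L$-harmonic precisely because $\sigma^2-(d-2)\sigma=a$, the $h$-transform identity $e^{-t\L}(x,y)=|x|^{-\sigma}|y|^{-\sigma}p_t^{\mathcal A}(x,y)$ is right, and your computation $\mu(B(x,r))\asymp r^d(|x|\vee r)^{-2\sigma}$ (valid since $2\sigma\le d-2<d$) reassembles into exactly the stated prefactor. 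You also correctly identify where the real work lies: the scale-invariant Poincar\'e/parabolic Harnack inequality (or, in the \cite{LS} route, the weighted Nash inequality plus Davies perturbation) for the measure $|x|^{-2\sigma}\,dx$, uniformly over the whole range of $a$, including the regime $-2\sigma\ge d$ where the weight exits $A_2$. That step is asserted to be ``standard'' or ``routine'' but is not carried out, and it is the entire analytic content of the theorem; as a self-contained proof the proposal is therefore incomplete, though as a roadmap to the cited literature it is faithful and contains no incorrect steps. Given that the paper itself treats the lemma as a black box, leaving that core estimate to \cite{LS,MS} is the appropriate resolution.
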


These estimates formed the starting point of the analysis in \cite{KMVZZ1}, which developed a number of basic harmonic analysis tools that will be of use in this paper.  Foremost among these is the following, which tells us when Sobolev spaces defined through powers of $\L$ coincide with the traditional Sobolev spaces.

\begin{lemma}[Equivalence of Sobolev spaces, \cite{KMVZZ1}]\label{pro:equivsobolev} Let $d\geq 3$, $a\geq -(\frac{d-2}{2})^2$, and $0<s<2$. If $1<p<\infty$ satisfies $\frac{s+\sigma}{d}<\frac{1}{p}< \min\{1,\frac{d-\sigma}{d}\}$, then
\[
\||\nabla|^s f \|_{L_x^p}\lesssim_{d,p,s} \|(\L)^{\frac{s}{2}} f\|_{L_x^p}\qtq{for all}f\in C_c^\infty(\R^d\backslash\{0\}).
\]
If $\max\{\frac{s}{d},\frac{\sigma}{d}\}<\frac{1}{p}<\min\{1,\frac{d-\sigma}{d}\}$, then
\[
\|(\L)^{\frac{s}{2}} f\|_{L_x^p}\lesssim_{d,p,s} \||\nabla|^s f\|_{L_x^p} \qtq{for all} f\in C_c^\infty(\R^d\backslash\{0\}).
\]
\end{lemma}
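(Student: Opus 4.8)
The plan is to deduce both inequalities from the $L^p$-boundedness of two auxiliary operators built from $\L$ and $-\Delta$. For the first inequality it suffices, by density, to show that $T_s := |\nabla|^s (\L)^{-s/2}$ extends to a bounded operator on $L^p(\R^d)$ for $p$ in the stated range, since then $\||\nabla|^s f\|_{L^p} = \|T_s\,(\L)^{s/2} f\|_{L^p}\lesssim \|(\L)^{s/2}f\|_{L^p}$; the second inequality amounts to boundedness of $S_s := (\L)^{s/2}|\nabla|^{-s}$ on the corresponding range. (That the fractional powers $(\L)^{\pm s/2}$ make sense here follows from the positivity of $\L$ and the functional calculus, with $0<s<2$ ensuring we stay below the scaling that would see the singularity of the potential too strongly.) The whole argument rests on good pointwise kernel bounds for $(\L)^{-s/2}$ and for the composition $|\nabla|^s(\L)^{-s/2}$.

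First I would use the subordination formula
\[
(\L)^{-s/2} = \tfrac{1}{\Gamma(s/2)}\int_0^\infty t^{\frac s2-1}\,e^{-t\L}\,dt
\]
together with the Gaussian heat-kernel bounds of Lemma~\ref{L:kernel}. Integrating in $t$ produces
\[
0\le (\L)^{-s/2}(x,y)\lesssim |x-y|^{-(d-s)}\Bigl(1\vee\tfrac{|x-y|}{|x|}\Bigr)^{\sigma}\Bigl(1\vee\tfrac{|x-y|}{|y|}\Bigr)^{\sigma},
\]
namely the usual Riesz-potential kernel corrected by weights that become singular near the origin when $\sigma>0$ (i.e.\ $a<0$) and are harmless when $\sigma\le 0$. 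One then needs to control the kernel of $|\nabla|^s(\L)^{-s/2}$; the cleanest route is to subtract the model operator and write $T_s = \mathrm{Id} + |\nabla|^s\bigl[(\L)^{-s/2}-(-\Delta)^{-s/2}\bigr]$, since the kernel of the difference carries an extra weight that improves its integrability near the origin, the operator $|\nabla|^s$ being understood through the Riesz-potential representation.

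With these kernel bounds in hand, the remaining task is the $L^p$-boundedness of $T_s$ and $S_s$. I would split the kernel over the regions $|x|\sim|y|$, $|x|\ll|y|$, $|x|\gg|y|$: on the diagonal region the weights are $\sim 1$ and one is reduced to a standard Calder\'on--Zygmund/Riesz-potential estimate, while on the off-diagonal regions the operator behaves like convolution with a homogeneous kernel dressed by a power of $|x|$ or $|y|$, where a Schur test (or a Stein--Weiss weighted Hardy--Littlewood--Sobolev inequality) applies. Boundedness holds precisely when the resulting weighted kernels are integrable at the origin and at infinity on the dyadic scales dictated by the homogeneities, and tracking this arithmetic is exactly what produces the asymmetric conditions on $1/p$: the constraint $\frac{s+\sigma}{d}<\frac1p$ for $T_s$ comes from the concentration of $(\L)^{-s/2}g$ near the origin, whereas $S_s$ only requires $\max\{\frac sd,\frac\sigma d\}<\frac1p$; the upper bound $\frac1p<\min\{1,\frac{d-\sigma}{d}\}$ is the dual condition in both cases. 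To pin down the endpoints I would combine the kernel bounds with Stein's complex-interpolation theorem applied to the analytic family $z\mapsto (\L)^{z}(-\Delta)^{-z}$, using Lemma~\ref{L:kernel} to control its behavior on the imaginary axis.

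The step I expect to be the main obstacle is the $L^p$-boundedness itself. Because $\L$ is not translation-invariant, Mikhlin--H\"ormander multiplier theory is unavailable, so one is forced to work with the two-weight kernel estimates above and, in particular, to control the action of the nonlocal operator $|\nabla|^s$ on a kernel that is singular both on the diagonal and at the spatial origin. Understanding the interaction of these two singularities — and verifying that it costs nothing beyond the advertised range of $p$ — is the technical heart of the matter; this analysis is carried out in \cite{KMVZZ1}, from which we quote the statement.
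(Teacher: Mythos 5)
You should first note that the paper does not prove this lemma at all: it is quoted verbatim from \cite{KMVZZ1} (as the attribution in the statement indicates), so there is no in-paper argument to match, and your proposal likewise ends by deferring the ``technical heart'' to \cite{KMVZZ1}. At the level of what is actually established, you and the authors are therefore doing the same thing. That said, the sketch you give is \emph{not} the route taken in \cite{KMVZZ1}. There, the equivalence is deduced from the Mikhlin-type multiplier theorem and the attendant Littlewood--Paley theory for $\L$ --- precisely the tools recorded as Lemmas~\ref{L:Bernie} and~\ref{T:sq} in this paper: one writes $\||\nabla|^s f\|_{L^p}$ via the classical square function, expands each classical projection $P_N$ against the heat-kernel projections $P^a_M$, proves $L^p\to L^p$ operator bounds for the compositions $P_N P^a_M$ with a gain in $\min\{N/M,M/N\}$ (this is where the weights $(1\vee\sqrt{t}/|x|)^\sigma$ from Lemma~\ref{L:kernel} impose the stated restrictions on $p$), and closes with a Schur test in the two dyadic frequency parameters and the square function estimate for $\L$. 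Your route --- pointwise kernel bounds for $(\L)^{-s/2}$ via subordination, followed by a weighted Schur/Stein--Weiss analysis of $|\nabla|^s(\L)^{-s/2}$ --- is a genuinely different strategy of the ``Riesz transform'' type; its advantage is that it makes the origin of each constraint on $1/p$ transparent (your heuristic accounting of $\frac{s+\sigma}{d}<\frac1p$ and of the dual upper bound is consistent with the statement, and your kernel bound for $(\L)^{-s/2}$ is correct), while the Littlewood--Paley route avoids ever touching the diagonal singularity directly.

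If your sketch were to be read as a standalone proof, the gap is exactly where you locate it, and it is worth naming the missing ingredients concretely. First, the assertion that the kernel of $(\L)^{-s/2}-(-\Delta)^{-s/2}$ ``carries an extra weight that improves its integrability near the origin'' does not follow from Lemma~\ref{L:kernel}, which bounds each heat kernel separately but gives no cancellation in the difference; one would need a Duhamel expansion of $e^{-t\L}-e^{t\Delta}$ against the potential $a|x|^{-2}$ and new estimates for the resulting iterated kernels. Second, even granting good bounds on the difference kernel, $|\nabla|^s$ applied to it produces an operator of order zero whose kernel is \emph{not} absolutely integrable near the diagonal, so a Schur or Stein--Weiss test on $|K(x,y)|$ cannot close the region $|x|\sim|y|$; genuine Calder\'on--Zygmund input (or the multiplier theorem) is required there. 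Third, the singular-integral representation of $|\nabla|^s$ you invoke covers $0<s<1$ cleanly but needs modification for $1\le s<2$, and the complex-interpolation endgame requires $L^p$ bounds on the imaginary powers $\L^{iy}$ with controlled growth in $y$ --- which again comes from the multiplier theorem of \cite{KMVZZ1}. None of this makes the proposal wrong, but it does mean the load-bearing steps all live in the cited reference, exactly as they do for the paper itself.
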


\begin{remark} We consider the full range $a>-\frac14$, which puts some restrictions on the spaces we may use.  This accounts for some of the more peculiar exponents appearing in Section~\ref{S:embed} and Section~\ref{S:exist}.
\end{remark}

We will make use of the following fractional calculus estimates due to Christ and Weinstein \cite{CW}.  Combining these estimates with Lemma~\ref{pro:equivsobolev}, we can deduce analogous statements for the operator $\L$ (for restricted sets of exponents).

\begin{lemma}[Fractional calculus]\text{ }
\begin{itemize}
\item[(i)] Let $s\geq 0$ and $1<r,r_j,q_j<\infty$ satisfy $\tfrac{1}{r}=\tfrac{1}{r_j}+\tfrac{1}{q_j}$ for $j=1,2$. Then
\[
\| |\nabla|^s(fg) \|_{L_x^r} \lesssim \|f\|_{L_x^{r_1}} \||\nabla|^s g\|_{L_x^{q_1}} + \| |\nabla|^s f\|_{L_x^{r_2}} \| g\|_{L_x^{q_2}}.
\]
\item[(ii)] Let $G\in C^1(\C)$ and $s\in (0,1]$, and let $1<r_1\leq \infty$  and $1<r,r_2<\infty$ satisfy $\tfrac{1}{r}=\tfrac{1}{r_1}+\tfrac{1}{r_2}$. Then
\[
\| |\nabla|^s G(u)\|_{L_x^r} \lesssim \|G'(u)\|_{L_x^{r_1}} \|u\|_{L_x^{r_2}}.
\]
\end{itemize}
\end{lemma}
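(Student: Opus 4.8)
The plan is to prove both estimates by the standard Littlewood--Paley (paraproduct) decomposition, following the original argument of Christ--Weinstein \cite{CW}; since the statement is classical one could simply invoke that reference, but here is how I would reconstruct it.

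For (i), fix a Littlewood--Paley decomposition $1=\sum_N P_N$ into dyadic pieces and split the product as $fg=\pi_{\mathrm{hl}}(f,g)+\pi_{\mathrm{lh}}(f,g)+\pi_{\mathrm{hh}}(f,g)$, the high--low, low--high, and high--high paraproducts. I would use the square-function characterization $\||\nabla|^s h\|_{L^r}\sim\bigl\|\bigl(\sum_N N^{2s}|P_Nh|^2\bigr)^{1/2}\bigr\|_{L^r}$ throughout. For $\pi_{\mathrm{hl}}(f,g)=\sum_N(P_Nf)(P_{\ll N}g)$ the output is frequency-localized near $N$, so $\||\nabla|^s\pi_{\mathrm{hl}}\|_{L^r}\lesssim\bigl\|\bigl(\sum_N N^{2s}|P_Nf|^2\bigr)^{1/2}\sup_N|P_{\le N}g|\bigr\|_{L^r}$; Hölder with exponents $r_2,q_2$, the square-function bound for $\||\nabla|^sf\|_{L^{r_2}}$, the pointwise estimate $\sup_N|P_{\le N}g|\lesssim Mg$, and boundedness of the Hardy--Littlewood maximal function $M$ on $L^{q_2}$ then produce the term $\||\nabla|^sf\|_{L^{r_2}}\|g\|_{L^{q_2}}$. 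The low--high piece is symmetric and yields $\|f\|_{L^{r_1}}\||\nabla|^sg\|_{L^{q_1}}$. For $\pi_{\mathrm{hh}}(f,g)=\sum_N(P_Nf)(P_{\sim N}g)$ the output frequency is only $\lesssim N$, so one bounds $|\nabla|^s$ of the sum by $\sum_N N^s|P_Nf|\,|P_{\sim N}g|$ --- here $s\ge0$ is what controls the loss --- and Cauchy--Schwarz in $N$ followed by Hölder gives a bound by either stated term.

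For (ii), localize again: $|\nabla|^sG(u)=\sum_N|\nabla|^sP_NG(u)$, and use that the convolution kernel $K_N$ of $P_N$ has vanishing integral to write $P_NG(u)(x)=\int K_N(y)\bigl(G(u(x-y))-G(u(x))\bigr)\,dy$. The mean value theorem gives the pointwise bound $|G(u(x-y))-G(u(x))|\le|u(x-y)-u(x)|\int_0^1|G'(u(x)+t(u(x-y)-u(x)))|\,dt$; the factor $|u(x-y)-u(x)|$ is absorbed into $K_N$ and, after summing the dyadic pieces, produces the gain of $s\in(0,1]$ derivatives on $u$, while the $G'$ factor is dominated pointwise by $M(G'(u))$ (using continuity of $G'$ and that $u$ varies little on the relevant scale). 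Taking $L^r$ norms, Hölder with exponents $r_1,r_2$, $L^{r_1}$-boundedness of $M$, and the Littlewood--Paley identity for $\||\nabla|^su\|_{L^{r_2}}$ then yield the claim; the restriction $s\le1$ is precisely what permits the single-difference estimate.

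The main obstacle is the high--high interaction in (i): the output frequency can lie far below $N$, so the naive estimate loses derivatives, and it is exactly the hypothesis $s\ge0$ that rescues it. In (ii) the difficulty is instead the limited regularity --- $G\in C^1$ and $s\le1$ --- which precludes any brute-force fractional differentiation of $G(u)$; the kernel-cancellation/mean-value scheme above, with $G'(u)$ kept inside an $L^{r_1}$ norm rather than an $L^\infty$ norm, is the crux.
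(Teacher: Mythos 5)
The paper does not actually prove this lemma; it is quoted directly from Christ--Weinstein \cite{CW}, so there is no in-paper argument to compare against. Your paraproduct reconstruction of part (i) is the standard one and is essentially correct, with one small caveat: the geometric summation $\sum_{N\gtrsim M}(M/N)^s$ needed for the high--high piece requires $s>0$, not merely $s\ge 0$; the case $s=0$ is just H\"older's inequality and should be dispatched separately.

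Part (ii), however, has a genuine gap at precisely the step you identify as the crux. After the mean value theorem you must control $\int_0^1|G'(u(x)+t(u(x-y)-u(x)))|\,dt$, and you assert this is dominated pointwise by $M(G'(u))(x)$ ``using continuity of $G'$ and that $u$ varies little on the relevant scale.'' This does not work: $u$ is merely a function with $s$ derivatives in $L^{r_2}$ and can oscillate arbitrarily between $x$ and $x-y$; more importantly, the intermediate point $u(x)+t(u(x-y)-u(x))$ lies on a segment in $\C$ that need not meet the range of $u$ at all, so $G'$ evaluated there is not controlled by any value of $G'(u(\cdot))$, let alone by its maximal function. (Consider $u$ essentially two-valued with $G'$ vanishing at those two values but large in between; then $G(u)$ can have large oscillation while $G'(u)\equiv 0$.) The standard proof---and the reason the lemma is really a statement about power-type nonlinearities such as $G(u)=|u|^2u$ rather than arbitrary $G\in C^1(\C)$---imposes the segment condition $|G'(\zeta)|\lesssim|G'(z)|+|G'(w)|$ for all $\zeta$ on the segment joining $z$ and $w$. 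With that hypothesis the $t$-integral is bounded by $|G'(u(x))|+|G'(u(x-y))|$; the first term factors out of the $y$-integral, and the second is paired with $|K_N(y)|\,|u(x-y)-u(x)|$ and handled by maximal-function estimates. You should either add this structural hypothesis (harmless for the application in the paper) or simply cite \cite{CW}, where $G$ is assumed to have the requisite form.
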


We will make use of Littlewood--Paley projections defined via the heat kernel:
\begin{align*}
P_N^a:=e^{-\mathcal L_a/N^2}-e^{-4\mathcal L_a/N^2}\qtq{for} N \in 2^{\mathbb{Z}}.
\end{align*}

To state the following results, we define
\[
q_0 := \begin{cases} \infty & \text{if }a\geq 0, \\ \tfrac{d}{\sigma} & \text{if }-(\frac{d-2}{2})^2\leq a < 0, \end{cases}
\]
and we write $q_0'$ for the dual exponent to $q_0$.  We begin with several lemmas proved in \cite{KMVZZ1} on the basis of the (Mikhlin-type) multiplier theorem proved therein for functions of the operator $\L$.

\begin{lemma}[Expansion of the identity, \cite{KMVZZ1}] Let $q_0' < r < q_0$. Then
\[
f= \sum_{N\in 2^{\mathbb{Z}}} P_N^a f\qtq{as elements of} L_x^r.
\]
\end{lemma}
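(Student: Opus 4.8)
\textit{Proposal.} The plan is to exploit the telescoping structure of the heat‑kernel projections and reduce the claim to the strong continuity and large‑time decay of the semigroup $e^{-t\L}$ on $L^r_x$. Writing $N=2^j$, one has $P_N^a=e^{-\L/4^j}-e^{-\L/4^{j-1}}$, so that for $m\le n$,
\[
\sum_{j=m}^n P_{2^j}^a f = e^{-\L/4^n}f-e^{-\L/4^{m-1}}f = e^{-t_n\L}f-e^{-s_m\L}f ,
\]
with $t_n=4^{-n}\to0$ as $n\to\infty$ and $s_m=4^{\,1-m}\to\infty$ as $m\to-\infty$. Thus the assertion reduces to showing, for every $f\in L^r_x$ with $q_0'<r<q_0$, that $e^{-t\L}f\to f$ in $L^r_x$ as $t\to0^+$ and $e^{-t\L}f\to0$ in $L^r_x$ as $t\to\infty$; the bi‑infinite series then converges (in any manner of exhaustion) to $f-0=f$.

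Both limits follow from two facts about $e^{-t\L}$ which are consequences of the heat kernel bounds of Lemma~\ref{L:kernel} (and are recorded in \cite{KMVZZ1}): first, the uniform bound $\sup_{t>0}\|e^{-t\L}\|_{L^r_x\to L^r_x}\lesssim1$ for $q_0'<r<q_0$; and second, the smoothing estimate $\|e^{-t\L}\|_{L^{r_1}_x\to L^{r}_x}\lesssim t^{-\frac32(\frac1{r_1}-\frac1r)}$ whenever $q_0'<r_1\le r<q_0$, both obtained from Lemma~\ref{L:kernel} by a Schur/Young argument handling the weights $(1\vee\sqrt t/|x|)^\sigma$ near the origin. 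By the uniform bound and the density of $C_c^\infty(\R^3\setminus\{0\})$ in $L^r_x$, it suffices to verify the two limits for $f\in C_c^\infty(\R^3\setminus\{0\})$. For such $f$ we have $\L f=-\Delta f+a|x|^{-2}f\in L^r_x$ and $\partial_s\,e^{-s\L}f=-e^{-s\L}(\L f)$ in $L^r_x$, whence
\[
\|e^{-t\L}f-f\|_{L^r_x}\le\int_0^t\|e^{-s\L}(\L f)\|_{L^r_x}\,ds\lesssim t\,\|\L f\|_{L^r_x}\to0\qtq{as}t\to0^+ .
\]
For the large‑time limit, choose $r_1\in(q_0',r)$ (possible since $r>q_0'$); then the smoothing estimate gives $\|e^{-t\L}f\|_{L^r_x}\lesssim t^{-\frac32(\frac1{r_1}-\frac1r)}\|f\|_{L^{r_1}_x}\to0$ as $t\to\infty$, completing the proof.

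The routine part is the $t\to0^+$ limit, which is just strong continuity of the semigroup on $L^r_x$. The point requiring care is the decay as $t\to\infty$: one needs the semigroup to genuinely smooth between Lebesgue spaces despite the singular weight $|x|^{-\sigma}$ produced by the potential when $a<0$, and this is exactly where the hypothesis $r<q_0$ (i.e. $r<\tfrac3\sigma$ for $-\tfrac14<a<0$) enters. All the needed quantitative input is, however, already available from Lemma~\ref{L:kernel} and \cite{KMVZZ1}, so no genuinely new estimate is required.
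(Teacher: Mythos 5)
Your argument is correct. Note that the paper contains no proof of this lemma at all---it is imported verbatim from \cite{KMVZZ1}---and your route (telescoping $\sum_{j=m}^{n}P_{2^j}^a f = e^{-4^{-n}\L}f - e^{-4^{1-m}\L}f$, then strong continuity at $t\to0^+$ and decay at $t\to\infty$ of the semigroup on $L^r_x$, both extracted from the heat kernel bounds of Lemma~\ref{L:kernel}) is the standard one and is essentially how the cited reference argues; in particular you have correctly located where each half of the hypothesis $q_0'<r<q_0$ enters (the $x$-weight forces $r<q_0$, the $y$-weight forces the dual constraint). The one step I would tighten is the line $\partial_s e^{-s\L}f=-e^{-s\L}(\L f)$ ``in $L^r_x$'': taken literally this presupposes the strong $L^r$-continuity you are in the middle of establishing. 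The clean fix is to observe that $C_c^\infty(\R^3\setminus\{0\})\subset D(\L)$ with $\L f\in L^2_x\cap L^r_x$, so the identity $e^{-t\L}f-f=-\int_0^t e^{-s\L}(\L f)\,ds$ holds in $L^2_x$ by the spectral theorem, and then the uniform $L^r$-bound on the integrand together with Minkowski's integral inequality yields $\|e^{-t\L}f-f\|_{L^r_x}\lesssim t\|\L f\|_{L^r_x}$. This is cosmetic rather than a gap.
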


\begin{lemma}[Bernstein estimates, \cite{KMVZZ1}]\label{L:Bernie} Let $q_0'<q\leq r<q_0$.  Then
\begin{itemize}
\item[(i)] The operators $P^a_{N}$ are bounded on $L_x^r$.
\item[(ii)] The operators $P^a_{N}$ map $L_x^q$ to $L_x^r$, with norm $O(N^{\frac dq-\frac dr})$.
\item[(iii)] For any $s\in \R$,
\[
N^s\|P^a_N f\|_{L_x^r}  \sim \bigl\|(\L)^{\frac s2}P^a_N f\bigr\|_{L_x^r}.
\]
\end{itemize}
\end{lemma}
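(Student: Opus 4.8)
The plan is to derive all three parts from the heat-kernel bounds in Lemma~\ref{L:kernel} and the Mikhlin-type multiplier theorem for $\L$ established in \cite{KMVZZ1}, together with the scaling structure of the problem: since the potential $a|x|^{-2}$ is homogeneous of degree $-2$, the dilations $(D_\lambda f)(x):=f(\lambda x)$ satisfy $e^{-t\L}D_\lambda=D_\lambda e^{-t\lambda^2\L}$, so that $P_N^a=D_{1/N}P_1^aD_N$ and $(\L)^{s/2}D_\lambda=\lambda^s D_\lambda(\L)^{s/2}$; combined with $\|D_\lambda g\|_{L^\rho_x}=\lambda^{-d/\rho}\|g\|_{L^\rho_x}$, this reduces each estimate to the case $N=1$.

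For (i) and (iii) I would write $P_N^a=m(\L/N^2)$ with $m(\mu):=e^{-\mu}-e^{-4\mu}$: this function is smooth, vanishes at $\mu=0$, and obeys $|\mu^k m^{(k)}(\mu)|\lesssim_k 1$ for every $k$, a bound invariant under the dilation $\mu\mapsto\mu/N^2$. The multiplier theorem of \cite{KMVZZ1} then gives (i) for $q_0'<r<q_0$, uniformly in $N$. For (iii) one has $(\L)^{s/2}P_N^a=N^s m_s(\L/N^2)$ with $m_s(\mu):=\mu^{s/2}(e^{-\mu}-e^{-4\mu})$; the (simple) zero of $e^{-\mu}-e^{-4\mu}$ at the origin absorbs the factor $\mu^{s/2}$ while the exponential decay controls $m_s$ at infinity, so $m_s$ again falls under the multiplier theorem. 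The claimed equivalence then follows by applying this with $s$ and with $-s$, using that $(\L)^{\pm s/2}$ commutes with $P_N^a$ and that $(\L)^{-s/2}(\L)^{s/2}P_N^a=P_N^a$.

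For (ii), the scaling reduction leaves us to prove $\|P_1^a f\|_{L^r_x}\lesssim\|f\|_{L^q_x}$ whenever $q_0'<q\le r<q_0$. By Lemma~\ref{L:kernel} the integral kernel of $P_1^a$ is dominated by $w(x)\,e^{-c|x-y|^2}\,w(y)$ with $w(x)=(1\vee|x|^{-1})^\sigma$. If $a\ge0$ then $\sigma\le0$, so $w\le1$ and the bound is immediate from Young's inequality for the Gaussian convolution. If $-(\tfrac{d-2}{2})^2\le a<0$ then $\sigma\in(0,\tfrac{d-2}{2}]$ and $w$ is unbounded at the origin; here I would split $w=w\,\mathbf 1_{\{|x|\le1\}}+\mathbf 1_{\{|x|>1\}}$, note that $w\,\mathbf 1_{\{|\cdot|\le1\}}\in L^\rho_x$ precisely when $\rho<d/\sigma=q_0$, expand the operator into the four resulting pieces, and estimate each by combining H\"older's inequality (to absorb the weights into Lebesgue norms) with Young's inequality (for the Gaussian). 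This weighted mapping estimate is where essentially all the work lies: the conditions $q_0'<q\le r<q_0$ — dual to one another, since the kernel is symmetric — are exactly what is needed for every piece to close, and arranging the four pieces amounts to choosing the intermediate exponents compatibly with these constraints.
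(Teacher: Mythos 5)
This paper does not actually prove Lemma~\ref{L:Bernie}: it imports it verbatim from \cite{KMVZZ1}, remarking only that the proof there rests on the Mikhlin-type multiplier theorem for functions of $\L$. So the only available comparison is with that indicated strategy, and your framework --- dilation covariance of $\L$, the multiplier theorem, and the heat-kernel bounds of Lemma~\ref{L:kernel} --- is exactly the right one. Parts (i) and (ii) are essentially sound: $e^{-\mu}-e^{-4\mu}$ satisfies Mikhlin bounds uniformly under $\mu\mapsto \mu/N^2$, and your weighted Young/H\"older scheme for the kernel majorant $w(x)e^{-c|x-y|^2}w(y)$, keyed to the observation that $w\,\mathbf{1}_{\{|x|\le 1\}}\in L^\rho$ precisely for $\rho<d/\sigma=q_0$, is the standard route; the exponent bookkeeping you defer is routine. (One harmless slip: with your convention $e^{-t\L}D_\lambda=D_\lambda e^{-t\lambda^2\L}$ one gets $D_{1/N}P_1^aD_N=P_{1/N}^a$, so the conjugation identity should read $P_N^a=D_N P_1^a D_{1/N}$.)

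The genuine gap is in part (iii), in two respects. First, $m_s(\mu)=\mu^{s/2}(e^{-\mu}-e^{-4\mu})$ is an admissible symbol only for $s\ge -2$: the zero of $e^{-\mu}-e^{-4\mu}$ at the origin is simple, so it absorbs one power of $\mu$, not $|s|/2$ of them, and $m_s(\mu)\sim 3\mu^{1+s/2}$ is unbounded near $\mu=0$ once $s<-2$; so ``for any $s\in\R$'' is not reached by this symbol alone. Second, and more seriously, what the multiplier theorem actually yields from $m_{\pm s}$ is $\|(\L)^{s/2}P_N^af\|_{L^r}\lesssim N^s\|f\|_{L^r}$ and $N^s\|P_N^af\|_{L^r}\lesssim \|(\L)^{s/2}f\|_{L^r}$, with the \emph{unprojected} $f$ on the majorant side. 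The identity $(\L)^{-s/2}(\L)^{s/2}P_N^a=P_N^a$ does not convert these into the stated two-sided equivalence: $(\L)^{\mp s/2}$ alone is not a bounded multiplier, and $P_N^a$ is neither idempotent nor reproduced by any fattened companion $\tilde P_N^a$ (unlike projections built from compactly supported symbols), so there is no way to reinsert $P_N^a$ on the right-hand side.

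This last step is not merely unproved --- it cannot be repaired, because the equivalence in the literal form stated fails for heat-kernel projections even when $a=0$. In $d=3$ take $N=1$ and $\widehat f=\mathbf{1}_{\{|\xi|\le\eps\}}$: since $e^{-|\xi|^2}-e^{-4|\xi|^2}\sim 3|\xi|^2$ near the origin, one computes $\|P_1f\|_{L^2}\sim\eps^{7/2}$ while $\||\nabla| P_1f\|_{L^2}\sim\eps^{9/2}$, so $N^{-s}\|(\L)^{s/2}P_Nf\|_{L^2}\ll\|P_Nf\|_{L^2}$ as $\eps\to0$ for $s=1$; data Fourier-supported near $|\xi|=R\gg 1$ breaks the opposite inequality, since the symbol decays only Gaussianly there. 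The provable (and, in this paper, actually used) versions are the one-sided bounds with $f$ rather than $P_N^af$ on the right, which your multiplier computation already delivers for the range of $s$ that occurs; if you want a genuine two-sided equivalence you should instead work with projections $\phi(\sqrt{\L}/N)$ for a compactly supported bump $\phi$, for which the fattened-projection argument is available via the same multiplier theorem. As written, the proof of (iii) does not close.
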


\begin{lemma}[Square function estimate, \cite{KMVZZ1}]\label{T:sq}
Let $0\leq s<2$ and $q_0'<r<q_0$. Then
\begin{align*}
\biggl\|\biggl(\sum_{N\in2^\mathbb{Z}} N^{2s}| P^a_N f|^2\biggr)^{\!\!\frac 12}\biggr\|_{L_x^r} \sim \|(\L)^{\frac s2}f\|_{L_x^r}.
\end{align*}
\end{lemma}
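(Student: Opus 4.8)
The plan is to reduce to the case $s=0$ and then run the classical Littlewood--Paley argument, using the Mikhlin-type multiplier theorem for $\L$ from \cite{KMVZZ1} in place of the Fourier-multiplier theorem. Writing $P_N^a=\phi(\L/N^2)$ with $\phi(\mu)=e^{-\mu}-e^{-4\mu}$, we have
\[
N^sP_N^a=\psi(\L/N^2)\,(\L)^{\frac s2},\qquad \psi(\mu):=\mu^{-\frac s2}\phi(\mu),
\]
since $(\L)^{s/2}$ commutes with every function of $\L$. As $0\le s<2$, and $\phi$ vanishes to first order at $\mu=0$ while being Schwartz at $\mu=+\infty$, the function $\psi$ is smooth and strictly positive on $(0,\infty)$ and obeys the scale-invariant bounds $|\mu^k\psi^{(k)}(\mu)|\lesssim_{k,M}\mu^{1-\frac s2}\wedge\mu^{-M}$ for all $k,M\ge0$. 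Setting $g:=(\L)^{s/2}f$, it therefore suffices to prove the two estimates
\[
\bigl\|\bigl({\textstyle\sum_N}|\beta(\L/N^2)g|^2\bigr)^{\frac12}\bigr\|_{L_x^r}\lesssim\|g\|_{L_x^r}
\qtq{and}
\|g\|_{L_x^r}\lesssim\bigl\|\bigl({\textstyle\sum_N}|\psi(\L/N^2)g|^2\bigr)^{\frac12}\bigr\|_{L_x^r},
\]
where in the first estimate $\beta$ denotes any function obeying the same scale-invariant bounds as $\psi$ (we will use it for $\beta=\psi$ and for a companion bump $\widetilde\psi$ constructed below).

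For the upper bound, Khintchine's inequality replaces the left-hand side by $\bigl(\mathbb{E}\,\|m_\epsilon(\L)g\|_{L_x^r}^r\bigr)^{1/r}$, where $m_\epsilon(\mu):=\sum_N\epsilon_N\,\beta(\mu/N^2)$ for independent signs $(\epsilon_N)$. The dilates $\beta(\cdot/N^2)$ have finite overlap in the multiplicative sense, and each obeys $|\mu^k\partial_\mu^k[\beta(\mu/N^2)]|\lesssim_k(\mu/N^2)^{1-\frac s2}\wedge(\mu/N^2)^{-M}$; summing the resulting geometric series in $N$ gives $\sup_{\mu>0}|\mu^k\partial_\mu^k m_\epsilon(\mu)|\lesssim_k1$, uniformly in the choice of signs. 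The multiplier theorem of \cite{KMVZZ1}, valid on $L_x^r$ for $q_0'<r<q_0$, then yields $\|m_\epsilon(\L)\|_{L_x^r\to L_x^r}\lesssim1$ uniformly in $\epsilon$, and the upper bound follows on taking expectations.

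For the lower bound we use a Calderón reproducing formula. Put $\Sigma(\mu):=\sum_N\psi(\mu/N^2)^2$, which is smooth, bounded above and below on $(0,\infty)$, and satisfies $\Sigma(\mu/N^2)=\Sigma(\mu)$ for every $N\in2^{\mathbb Z}$; then $\widetilde\psi:=\psi/\Sigma$ obeys the same scale-invariant bounds as $\psi$ and satisfies $\sum_N\widetilde\psi(\mu/N^2)\psi(\mu/N^2)=1$ for all $\mu>0$. Hence $g=\sum_N\widetilde\psi(\L/N^2)\psi(\L/N^2)g$ in $L_x^r$, the partial sums converging by a standard density argument using the uniform multiplier bounds above. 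Pairing with $h\in L_x^{r'}$, using self-adjointness of $\L$, Cauchy--Schwarz in $N$, and Hölder in $x$,
\[
|\langle g,h\rangle|\le\bigl\|\bigl({\textstyle\sum_N}|\psi(\L/N^2)g|^2\bigr)^{\frac12}\bigr\|_{L_x^r}\,\bigl\|\bigl({\textstyle\sum_N}|\widetilde\psi(\L/N^2)h|^2\bigr)^{\frac12}\bigr\|_{L_x^{r'}}.
\]
Since the range $q_0'<r<q_0$ is invariant under $r\mapsto r'$, the upper bound already proved (with $\beta=\widetilde\psi$) controls the last factor by $\|h\|_{L_x^{r'}}$; taking the supremum over $\|h\|_{L_x^{r'}}\le1$ completes the proof.

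The main obstacle is the uniform Mikhlin estimate on the randomized symbols $m_\epsilon$: one must track the interplay of the $\mu^{-s/2}$ singularity at the origin with the summation in $N$, and verify that the multiplier theorem of \cite{KMVZZ1} applies with constants independent both of the signs and of $r$ within the stated range. Everything else is routine bookkeeping. One could instead invoke a vector-valued version of that multiplier theorem directly, thereby bypassing the randomization; the route above is chosen only to keep the required input from \cite{KMVZZ1} to the scalar statement used elsewhere in the paper.
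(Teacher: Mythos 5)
The paper does not actually prove this lemma---it is imported verbatim from \cite{KMVZZ1}, with only the remark that it rests on the Mikhlin-type multiplier theorem established there. Your argument is precisely that standard derivation (Khintchine randomization for the upper bound, a Calder\'on reproducing formula plus duality for the lower bound, with the $\mu^{-s/2}$ factor absorbed into the bump using $s<2$ so the dyadic sums of the symbol bounds converge), and it is correct.
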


The following refined Fatou lemma due to Brezis and Lieb \cite{BL} will be of use in Section~\ref{S:cc}.

\begin{lemma}[Refined Fatou, \cite{BL}]\label{L:RF} Let $1\leq r<\infty$ and let $\{f_n\}$ be a bounded sequence in $L_x^r$. If $f_n\to f$ almost everywhere, then
\[
\int \bigl| |f_n|^r - |f_n-f|^r - |f|^r \bigr| \,dx \to 0.
\]
\end{lemma}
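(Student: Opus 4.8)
The plan is to follow the classical argument of Brezis and Lieb \cite{BL}. First, since $f_n\to f$ a.e. and $\{f_n\}$ is bounded in $L^r_x$, Fatou's lemma guarantees $f\in L^r_x$, and hence $M:=\sup_n\|f_n-f\|_{L^r_x}\le \sup_n\|f_n\|_{L^r_x}+\|f\|_{L^r_x}<\infty$. The engine of the proof is the elementary pointwise inequality: for each $\varepsilon>0$ there is a constant $C_\varepsilon$, depending only on $\varepsilon$ and $r$, such that
\[
\bigl| |z+w|^r-|z|^r \bigr| \le \varepsilon |z|^r + C_\varepsilon |w|^r \qtq{for all} z,w\in\C.
\]
I would derive this from the convexity of $t\mapsto t^r$ on $[0,\infty)$: for $\theta\in(0,1)$ one has $(a+b)^r\le \theta^{1-r}a^r+(1-\theta)^{1-r}b^r$ for all $a,b\ge 0$, and choosing $\theta$ close to $1$ gives $(a+b)^r\le (1+\varepsilon)a^r+C_\varepsilon b^r$. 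Applying this with $(a,b)=(|z|,|w|)$ yields the upper bound for $|z+w|^r-|z|^r$, and applying it to $|z|^r=|(z+w)+(-w)|^r$ yields the matching lower bound.

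Next I would apply this inequality with $z=f_n-f$ and $w=f$, which produces
\[
\bigl| |f_n|^r-|f_n-f|^r-|f|^r \bigr| \le \varepsilon |f_n-f|^r + (C_\varepsilon+1)|f|^r
\]
pointwise a.e. Define
\[
W_n^\varepsilon := \Bigl(\, \bigl| |f_n|^r-|f_n-f|^r-|f|^r \bigr| - \varepsilon |f_n-f|^r \,\Bigr)^+ .
\]
Then $0\le W_n^\varepsilon\le (C_\varepsilon+1)|f|^r\in L^1_x$; moreover, since $f_n\to f$ a.e. we have $|f_n-f|^r\to 0$ and $|f_n|^r\to |f|^r$ a.e., so $W_n^\varepsilon\to 0$ a.e. By the dominated convergence theorem, $\int W_n^\varepsilon\,dx\to 0$ as $n\to\infty$.

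Finally, using $\bigl| |f_n|^r-|f_n-f|^r-|f|^r \bigr|\le W_n^\varepsilon+\varepsilon|f_n-f|^r$ and integrating,
\[
\int \bigl| |f_n|^r-|f_n-f|^r-|f|^r \bigr|\,dx \le \int W_n^\varepsilon\,dx + \varepsilon\|f_n-f\|_{L^r_x}^r \le \int W_n^\varepsilon\,dx + \varepsilon M^r ,
\]
so that $\limsup_{n\to\infty}\int\bigl| |f_n|^r-|f_n-f|^r-|f|^r\bigr|\,dx\le \varepsilon M^r$; letting $\varepsilon\downarrow 0$ completes the proof. There is no genuine obstacle here: the whole argument is self-contained and dimension-independent, and the only step requiring any care is the pointwise inequality, which is precisely where the hypothesis $r\ge 1$ is used (through the convexity of $t\mapsto t^r$).
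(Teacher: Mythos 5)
Your proof is correct and is precisely the classical Brezis--Lieb argument from the cited reference \cite{BL}; the paper itself offers no proof, simply importing the lemma, so there is nothing to compare beyond noting that you have faithfully reconstructed the standard one. The only microscopic point worth a word is that your lower bound $|z|^r-|z+w|^r\le\varepsilon|z+w|^r+C_\varepsilon|w|^r$ has $|z+w|^r$ rather than $|z|^r$ on the right, but a second application of the upper bound converts it at the cost of relabeling $\varepsilon$ and $C_\varepsilon$, so the argument stands.
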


Strichartz estimates for the propagator $e^{-it\L}$ in $\R^3$ were proved by Burq, Planchon, Stalker, and Tahvildar-Zadeh in \cite{BPSTZ}.  Combining these with the Christ--Kiselev Lemma \cite{CK}, we obtain the following Strichartz estimates:

\begin{proposition}[Strichartz, \cite{BPSTZ}] Fix $a>-\frac14$. The solution $u$ to $(i\partial_t-\L)u = F$ on an interval $I\ni t_0$ obeys
\[
\|u\|_{L_t^q L_x^r(I\times\R^3)} \lesssim \|u(t_0)\|_{L_x^2(\R^3)} + \|F\|_{L_t^{\tilde q'} L_x^{\tilde r'}(I\times\R^3)}
\]
for any $2\leq q,\tilde q\leq\infty$ with $\frac{2}{q}+\frac{3}{r}=\frac{2}{\tilde q}+\frac{3}{\tilde r}= \frac32$ and $(q,\tilde q)\neq (2,2)$. \end{proposition}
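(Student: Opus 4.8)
The plan is to reduce the stated inequality to the already-established dual-pair space-time bounds of Burq--Planchon--Stalker--Tahvildar-Zadeh \cite{BPSTZ} by handling the Duhamel term with the Christ--Kiselev lemma \cite{CK}. First, I would recall the content imported from \cite{BPSTZ}: for $a>-\frac14$ in $\R^3$, the free propagator $e^{-it\L}$ obeys the homogeneous estimate $\|e^{-it\L}f\|_{L_t^q L_x^r(\R\times\R^3)}\lesssim \|f\|_{L_x^2}$ for every admissible pair, i.e. $2\le q\le\infty$, $\frac2q+\frac3r=\frac32$, $(q,r)\ne(2,\infty)$, together with the dual (inhomogeneous with no time ordering) estimate $\bigl\|\int_\R e^{is\L}F(s)\,ds\bigr\|_{L_x^2}\lesssim\|F\|_{L_t^{\tilde q'}L_x^{\tilde r'}}$ for admissible $(\tilde q,\tilde r)$; these follow from the corresponding estimates in \cite{BPSTZ} together with the $TT^*$ argument and the self-adjointness of $\L$ (which makes the functional calculus, and hence $(e^{-it\L})^* = e^{it\L}$, available). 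Composing the homogeneous estimate with its dual yields the \emph{untruncated} inhomogeneous estimate
\[
\Bigl\| \int_\R e^{-i(t-s)\L} F(s)\,ds \Bigr\|_{L_t^q L_x^r(\R\times\R^3)} \lesssim \|F\|_{L_t^{\tilde q'}L_x^{\tilde r'}(\R\times\R^3)}
\]
for all admissible $(q,r)$ and $(\tilde q,\tilde r)$.

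The second step is to pass from the untruncated operator to the retarded (time-ordered) operator $F\mapsto \int_{t_0}^t e^{-i(t-s)\L}F(s)\,ds$ that actually appears in the Duhamel formula. This is exactly the situation the Christ--Kiselev lemma is designed for: since the exponents satisfy $\tilde q' < q$ whenever $(q,\tilde q)\ne(2,2)$ (and in the endpoint-excluded regime one has $q>2\ge\tilde q'$ strictly, or $\tilde q' <\infty = \ldots$, in any case $\tilde q'<q$ because $\frac1{\tilde q'} = 1-\frac1{\tilde q} \ge \frac12 \ge \frac1q$ with equality ruled out), the Christ--Kiselev lemma upgrades the untruncated bound above to the retarded bound with the same constant up to a factor depending only on the exponents. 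Then writing the solution to $(i\partial_t-\L)u=F$ with $u(t_0)$ prescribed via its Duhamel representation $u(t)=e^{-i(t-t_0)\L}u(t_0)+i\int_{t_0}^t e^{-i(t-s)\L}F(s)\,ds$, the triangle inequality combines the homogeneous estimate for the first term with the retarded inhomogeneous estimate for the second, giving the claimed bound. Restricting to a subinterval $I\ni t_0$ is harmless, since one may extend $F$ by zero outside $I$ and the homogeneous term only gets smaller when the time integral is restricted.

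The only genuine subtlety — and the step I would be most careful about — is checking the hypotheses of the Christ--Kiselev lemma at the boundary of the admissible range, i.e. making sure the strict inequality $\tilde q'<q$ holds throughout the stated range. This is precisely why the endpoint $(q,\tilde q)=(2,2)$ is excluded in the statement: there $\tilde q'=2=q$ and Christ--Kiselev does not apply, while the double-endpoint inhomogeneous estimate is false in general. For all other admissible pairs with the stated scaling relations one has $q>2$ or $\tilde q>2$, hence $\tilde q'<q$, and the argument goes through. A minor bookkeeping point is that $r$ and $\tilde r$ are allowed to range independently (the scaling relation couples $q$ with $r$ and $\tilde q$ with $\tilde r$ separately), which is automatic from the composition of the homogeneous estimate in the pair $(q,r)$ with the dual estimate in the pair $(\tilde q,\tilde r)$; no further work is needed beyond quoting \cite{BPSTZ} in the required generality.
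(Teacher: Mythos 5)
Your proposal is correct and follows essentially the same route as the paper, which simply combines the homogeneous/dual estimates of \cite{BPSTZ} with the Christ--Kiselev lemma \cite{CK} to obtain the retarded inhomogeneous bounds away from the double endpoint $(q,\tilde q)=(2,2)$. Your verification that $\tilde q'<q$ throughout the stated range, so that Christ--Kiselev applies, is exactly the point the paper leaves implicit.
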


Note the loss of the double endpoint in the theorem above.  For $a\geq 0$, the double endpoint estimate can be recovered via the argument in \cite{KeelTao}, since $L^1_x\to L_x^\infty$ dispersive estimates hold in this case (see \cite[Theorem~1.11]{Fanelli}).  For sufficiently small $a<0$, one can also recover the double endpoint estimate via the argument in \cite{BPSTZ}.

Throughout the paper, it will be convenient to use the following notation:
\[
S^s_a(I) = L_t^2 H_a^{s,6}\cap L_t^\infty H_a^s(I\times\R^3) \qtq{and}
\dot S^s_a(I) = L_t^2 \dot H_a^{s,6}\cap L_t^\infty \dot H_a^s(I\times\R^3).
\]

\subsection{Convergence of operators}\label{S:coo}

In this section, we import a convergence result from \cite{KMVZZ2}; its consequences will be useful in Sections~\ref{S:cc}~and~\ref{S:embed}.

We begin with a definition, the need for which reflects the fact that $\L$ does not commute with translations.

\begin{definition}\label{D:ops} Suppose $\{x_n\}\subset\R^3$. We define
\[
\L^n = -\Delta + \tfrac{a}{|x+x_n|^2} \qtq{and}
\L^\infty = \begin{cases} -\Delta + \tfrac{a}{|x+x_\infty|^2} & \text{if}\quad x_n\to x_\infty\in \R^3, \\
    -\Delta & \text{if}\quad |x_n|\to\infty.\end{cases}
\]
In particular, $\L[\phi(x-x_n)] = [\L^n \phi](x-x_n).$
\end{definition}

 The operators $\L^\infty$ appear as limits of the operators $\L^n$, as in the following lemma.

\begin{lemma}[Convergence of operators, \cite{KMVZZ2}] \label{L:coo} Let $a>-\frac14$. Suppose $\tau_n\to \tau_\infty\in\R$ and $\{x_n\}\subset\R^3$ satisfies $x_n\to x_\infty\in\R^3$ or $|x_n|\to\infty$. Then,
\begin{align}
\label{coo1}
&\lim_{n\to\infty} \|\L^n \psi - \L^\infty \psi \|_{\dot H_x^{-1}} = 0\qtq{for all} \psi\in \dot H_x^1, \\
\label{coo3}
&\lim_{n\to\infty} \|\bigl( e^{-i\tau_n\L^n}-e^{-i\tau_\infty\L^\infty}\bigr)\psi\|_{\dot H_x^{-1}} =0 \qtq{for all} \psi\in \dot H_x^{-1}, \\
\label{coo4}
&\lim_{n\to\infty} \| \bigl[({\L^n})^{\frac12} -(\L^\infty)^{\frac12}\bigr]\psi\|_{L_x^2} = 0 \qtq{for all} \psi\in\dot H_x^1.
\end{align}
Furthermore, for any $2< q\leq \infty$ and $\frac{2}{q}+\frac{3}{r}=\frac{3}{2}$,
\begin{equation}
\label{coo5}
\lim_{n\to\infty} \|\bigl(e^{-it\L^n}-e^{-it\L^\infty}\bigr)\psi\|_{L_t^q L_x^r(\R\times\R^3)} = 0 \qtq{for all} \psi\in L_x^2.
\end{equation}
Finally, if $x_\infty\neq 0$, then for any $t>0$,
\begin{align}
\label{coo2}
\lim_{n\to\infty} \| [e^{-t\L^n}-e^{-t\L^\infty}]\delta_0 \|_{\dot H_x^{-1}} = 0.
\end{align}
\end{lemma}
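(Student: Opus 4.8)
The strategy is to reduce \eqref{coo1}, \eqref{coo3}, \eqref{coo4}, and \eqref{coo5} to the single statement that $\L^n\to\L^\infty$ in the strong resolvent sense on $L_x^2$, together with the uniform-in-$n$ bounds that come from translation-invariance of Hardy's inequality; \eqref{coo2} I would handle separately via the heat kernel bounds of Lemma~\ref{L:kernel}. First I would prove \eqref{coo1}. Since Hardy's inequality is translation invariant, $\|(\L^n-\L^\infty)\psi\|_{\dot H_x^{-1}}\lesssim_a\|\psi\|_{\dot H_x^1}$ uniformly in $n$. Because a point has zero $\dot H^1$-capacity in $\R^3$, the class $C_c^\infty(\R^3\setminus\{-x_\infty\})$ (or $C_c^\infty(\R^3)$ when $|x_n|\to\infty$) is dense in $\dot H_x^1$; for $\psi_\eps$ in this class the potential difference $a(|x+x_n|^{-2}-|x+x_\infty|^{-2})$ tends to $0$ uniformly on the (compact) support of $\psi_\eps$, so $\|(\L^n-\L^\infty)\psi_\eps\|_{L_x^{6/5}}\to0$ and $L_x^{6/5}\hookrightarrow\dot H_x^{-1}$; a $3\eps$-argument completes \eqref{coo1}. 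Strong resolvent convergence then drops out of the resolvent identity $(\L^n+1)^{-1}-(\L^\infty+1)^{-1}=(\L^n+1)^{-1}(\L^\infty-\L^n)(\L^\infty+1)^{-1}$: for fixed $g\in L_x^2$, $(\L^\infty+1)^{-1}g\in\dot H_x^1$, \eqref{coo1} makes the middle factor vanish, and $(\L^n+1)^{-1}\colon\dot H_x^{-1}\to L_x^2$ is bounded uniformly in $n$ by \eqref{iso} and the spectral theorem.

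Granting strong resolvent convergence, one has $f(\L^n)\to f(\L^\infty)$ strongly on $L_x^2$ for every bounded continuous $f$ vanishing at infinity, and, by Trotter--Kato, $e^{-i\tau_n\L^n}\to e^{-i\tau_\infty\L^\infty}$ strongly on $L_x^2$ whenever $\tau_n\to\tau_\infty$. Both \eqref{coo4} and \eqref{coo3} then follow by functional calculus together with the product identities $\sqrt{\L^n}\,\phi=\tfrac{\sqrt{\L^n}}{\L^n+1}\bigl((\L^n+1)\phi\bigr)$ and $(\L^n+1)\phi=(\L^\infty+1)\phi-(\L^n-\L^\infty)\phi$, which let one trade powers of $\L^n$ for powers of $\L^\infty$ modulo errors controlled by \eqref{coo1} (for $\phi\in C_c^\infty(\R^3\setminus\{-x_\infty\})$ one also uses that the potential converges uniformly on $\supp\phi$). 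Concretely, I would first deduce \eqref{coo4}, then convergence of $(\L^n)^{\pm1/2}$ on the appropriate scale of Sobolev spaces, and finally strong convergence of $e^{-i\tau_n\L^n}$ on $\dot H_x^s$, $s\in\{-1,0,1\}$, by this bootstrap; this is routine but lengthy bookkeeping, each step using density in the relevant space plus the uniform norm equivalences.

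For \eqref{coo5}, I would upgrade the strong $L_x^2$-convergence of the propagators to uniform convergence on compact time intervals. This uses pointwise convergence together with equicontinuity in $t$ of $e^{-it\L^n}\psi$ uniformly in $n$, which in turn follows from uniform tightness of the spectral measures $\langle E^{\L^n}(\cdot)\psi,\psi\rangle$ (obtained by comparing $(1+\L^n/R)^{-1}\psi$ to $(1+\L^\infty/R)^{-1}\psi$ for $R$ large). Fixing $\psi\in C_c^\infty(\R^3)$ and splitting the spacetime norm at $|t|=T$: on $|t|\le T$ one interpolates the uniform $L_x^2$-convergence against the uniform bound $\|(e^{-it\L^n}-e^{-it\L^\infty})\psi\|_{\dot H_x^1}\lesssim_a\|\psi\|_{\dot H_x^1}$ (so, using $\dot H_x^1\hookrightarrow L_x^6$, $\sup_{|t|\le T}\|(e^{-it\L^n}-e^{-it\L^\infty})\psi\|_{L_x^r}\to0$), then multiplies by $(2T)^{1/q}$; on $|t|>T$ one bounds by $\|e^{-it\L^n}\psi\|_{L_t^qL_x^r(|t|>T)}+\|e^{-it\L^\infty}\psi\|_{L_t^qL_x^r(|t|>T)}$, the second term vanishing as $T\to\infty$ by finiteness of the Strichartz norm and the first being $\lesssim T^{-1/q}\|\psi\|_{L_x^{r'}}$ uniformly in $n$ via the dispersive bound $\|e^{-it\L^n}\|_{L_x^1\to L_x^\infty}\lesssim|t|^{-3/2}$, valid uniformly for $a\ge0$ (with a Kato-smoothing substitute for $-\tfrac14<a<0$). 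Letting $n\to\infty$, then $T\to\infty$, then removing the regularity of $\psi$ by the uniform Strichartz inequality, yields \eqref{coo5}. Finally, for \eqref{coo2}, the hypothesis $x_\infty\neq0$ keeps $\delta_0$ away from the singularity of $\L^\infty$: Lemma~\ref{L:kernel} makes $e^{-t\L^n}(0,\cdot)$ bounded in $L_x^{6/5}$ uniformly in $n$ (the weight $(1\vee\sqrt t/|x_n|)^\sigma$ being harmless), so by $L_x^{6/5}\hookrightarrow\dot H_x^{-1}$ it is enough to prove $e^{-t\L^n}(0,\cdot)\to e^{-t\L^\infty}(0,\cdot)$ in $L_x^{6/5}$. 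This follows from the heat-Duhamel formula $[e^{-t\L^n}-e^{-t\L^\infty}]\delta_0=-\int_0^t e^{-(t-s)\L^n}(\L^n-\L^\infty)e^{-s\L^\infty}\delta_0\,ds$, the uniform $L_x^{6/5}$-boundedness of $e^{-(t-s)\L^n}$ (a consequence of the Gaussian bounds since $\tfrac65\in(q_0',q_0)$), and the fact that $e^{-s\L^\infty}\delta_0$ concentrates near the origin, where $|x+x_\infty|^{-2}-|x+x_n|^{-2}$ is both bounded and small, with the small-$s$ behavior tamed by the Gaussian localization. I expect \eqref{coo5} to be the main obstacle: the uniform-in-$n$ equicontinuity estimate and the uniform tail control in $t$ are the delicate points, and the case $-\tfrac14<a<0$ requires the more refined dispersive/smoothing information in place of the clean $|t|^{-3/2}$ decay.
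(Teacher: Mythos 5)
You cannot really be compared against ``the paper's own proof'' here: the paper does not prove Lemma~\ref{L:coo} at all, but imports it verbatim from \cite{KMVZZ2} (the only argument supplied locally is the remark that the general-$t$ case of \eqref{coo2} follows from the $t=1$ case by scaling). Judged on its own terms, your architecture is the standard one and most of it is sound: the density-plus-uniform-boundedness argument for \eqref{coo1} (points have zero $\dot H^1$-capacity in $d=3$, Hardy is translation invariant, and $L_x^{6/5}\hookrightarrow \dot H_x^{-1}$), the passage to strong resolvent convergence via the second resolvent identity, the treatment of \eqref{coo4} by splitting $\sqrt\lambda=\tfrac{\sqrt\lambda}{\lambda+1}\cdot(\lambda+1)$, and the heat--Duhamel argument for \eqref{coo2} are all correct in outline. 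One point you gloss over: Trotter--Kato gives \eqref{coo3} on $L_x^2$, whereas the claim is on $\dot H_x^{-1}$, where the propagators are not unitary for the flat norm and $\|f\|_{\dot H_x^{-1}}$ is not controlled by $\|f\|_{L_x^2}$; the repair is to test against $\psi=\L^\infty\phi$ with $\phi\in C_c^\infty(\R^3\setminus\{-x_\infty\})$ (dense in $\dot H_x^{-1}$ since $\L^\infty:\dot H_x^1\to\dot H_x^{-1}$ is an isomorphism) and use \eqref{coo1} together with the uniform equivalence of $\dot H_x^{-1}$ with $\dot H^{-1}_{\L^n}$. This is fillable, but it is more than ``bookkeeping.''

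The genuine gap is the tail estimate in \eqref{coo5} for $-\tfrac14<a<0$. Your control of $\|e^{-it\L^n}\psi\|_{L_t^qL_x^r(|t|>T)}$ uniformly in $n$ rests on the dispersive bound $\|e^{-it\L^n}\|_{L_x^1\to L_x^\infty}\lesssim |t|^{-3/2}$, which --- as the paper itself notes in citing \cite{Fanelli} --- is available only for $a\ge 0$; for $a<0$ the $L^1\to L^\infty$ bound genuinely fails because of the weights $(1\vee \sqrt{t}/|x|)^\sigma$ with $\sigma>0$ in Lemma~\ref{L:kernel}, and the ``Kato-smoothing substitute'' you invoke is precisely the missing ingredient, not a routine one. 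The step can be rescued, e.g.\ by noting that only $L_x^{r'}\to L_x^r$ decay for $2\le r<6<q_0=3/\sigma$ is needed (a restricted-range dispersive estimate compatible with the weighted kernel bounds), or, when $x_n\to x_\infty$, by the identity $e^{-it\L^n}\psi=[e^{-it\L}\psi(\cdot-x_n)](\cdot+x_n)$ together with uniform Strichartz and $\psi(\cdot-x_n)\to\psi(\cdot-x_\infty)$ in $L_x^2$, and, when $|x_n|\to\infty$, by perturbing off the free flow via Duhamel and exploiting that $e^{is\Delta}\psi(\cdot-x_n)$ stays away from the singularity of the potential for bounded times while decaying in $L_x^{6,2}$ for large times. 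As written, however, your proposal establishes \eqref{coo5} only for $a\ge 0$, and this must be repaired before the lemma is proved in the generality claimed.
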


\begin{remark} In \cite{KMVZZ2}, \eqref{coo2} appears with $t=1$; the case of general $t>0$ follows by scaling.
\end{remark}

In \cite[Corollary~3.4]{KMVZZ2}, the authors use \eqref{coo4} and \eqref{coo5} to prove
\[
\lim_{n\to\infty} \|e^{it_n\L^n}\psi\|_{L_x^6}=0\qtq{for}\psi\in \dot H_x^1.
\]
Interpolating this with $L_x^2$-boundedness yields the following corollary, which will be used in the proof of Proposition~\ref{P:IS}:

\begin{corollary}\label{L4-to-zero} For $\{x_n\}\subset\R^3$, $t_n\to\pm\infty$, and $\psi \in H_x^1$, we have
\[
\lim_{n\to\infty} \|e^{it_n\L^n}\psi \|_{L_x^4} = 0.
\]
\end{corollary}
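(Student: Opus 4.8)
The plan is to interpolate the known $\dot H_x^1$ decay statement from \cite[Corollary~3.4]{KMVZZ2} with the uniform $L_x^2$-bound coming from unitarity, and then apply Sobolev embedding. More precisely, recall from the discussion preceding the corollary that \eqref{coo4} and \eqref{coo5} yield
\[
\lim_{n\to\infty} \|e^{it_n\L^n}\psi\|_{L_x^6} = 0 \qtq{for all} \psi\in\dot H_x^1;
\]
here it is important that this holds for \emph{any} sequence $\{x_n\}\subset\R^3$ (passing, if necessary, to a subsequence along which $x_n\to x_\infty\in\R^3$ or $|x_n|\to\infty$, so that $\L^\infty$ is defined; since the conclusion is that a nonnegative sequence tends to zero, it suffices to prove it along an arbitrary subsequence). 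Meanwhile, since each $\L^n$ is self-adjoint, the propagators $e^{it_n\L^n}$ are unitary on $L_x^2$, so $\|e^{it_n\L^n}\psi\|_{L_x^2} = \|\psi\|_{L_x^2}$ for all $n$.

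First I would write any $\psi\in H_x^1$ and apply the Gagliardo--Nirenberg (equivalently, interpolation) inequality $\|f\|_{L_x^4(\R^3)} \lesssim \|f\|_{L_x^2(\R^3)}^{1/4}\|f\|_{L_x^6(\R^3)}^{3/4}$, valid in three dimensions since $\tfrac14 = \tfrac14\cdot\tfrac12 + \tfrac34\cdot\tfrac16$. Applying this with $f = e^{it_n\L^n}\psi$ gives
\[
\|e^{it_n\L^n}\psi\|_{L_x^4} \lesssim \|e^{it_n\L^n}\psi\|_{L_x^2}^{1/4}\|e^{it_n\L^n}\psi\|_{L_x^6}^{3/4} = \|\psi\|_{L_x^2}^{1/4}\,\|e^{it_n\L^n}\psi\|_{L_x^6}^{3/4}.
\]
Since $\psi\in H_x^1\subset \dot H_x^1$, the second factor tends to $0$ as $n\to\infty$ by the $L_x^6$-decay statement above, while the first factor is a fixed finite constant; hence the left-hand side tends to $0$, which is exactly the claim.

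The only mild subtlety — and the one place one must be slightly careful — is that the $L_x^6$-decay result in \cite{KMVZZ2} is stated under the standing assumption (inherited from Lemma~\ref{L:coo}) that $x_n$ converges in $\R^3$ or $|x_n|\to\infty$, so that $\L^\infty$ is well-defined; a general sequence $\{x_n\}$ need not have this property. This is handled by the standard subsequence argument: to show a nonnegative real sequence $a_n := \|e^{it_n\L^n}\psi\|_{L_x^4}$ converges to $0$, it suffices to show that every subsequence has a further subsequence along which $a_n\to 0$. Given any subsequence, one can pass to a further subsequence along which $\{x_n\}$ either converges in $\R^3$ or has $|x_n|\to\infty$ (and, if needed, along which $t_n\to+\infty$ or $t_n\to-\infty$ is already assumed), and along this further subsequence the argument of the previous paragraph applies verbatim. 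I do not expect any genuine obstacle here; the entire content of the corollary is this short interpolation, with the subsequence bookkeeping being purely routine.
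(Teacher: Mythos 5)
Your proposal is correct and is precisely the paper's argument: the authors also cite the $L_x^6$-decay from \cite[Corollary~3.4]{KMVZZ2} (itself a consequence of \eqref{coo4} and \eqref{coo5}) and interpolate with the $L_x^2$-bound from unitarity. Your extra remark about passing to subsequences along which $x_n$ converges or $|x_n|\to\infty$ is a sensible piece of bookkeeping that the paper leaves implicit.
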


We record one more corollary, which will be used in the proof of Theorem~\ref{T:embedding}.

\begin{corollary}\label{S-to-zero} Let $a>-\frac14$. Suppose $x_n\to x_\infty\in\R^3$ or $|x_n|\to\infty$.  Then
\[
\lim_{n\to\infty} \|(e^{-it\L^n} - e^{-it\L^\infty})\psi\|_{L_{t,x}^5(\R\times\R^3)} = 0 \qtq{for all}\psi\in H_x^1.
\]
\end{corollary}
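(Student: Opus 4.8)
The plan is to interpolate the $L^5_{t,x}$ norm between a Strichartz-admissible pair where we already control the difference $e^{-it\L^n}-e^{-it\L^\infty}$ and endpoints where we only have boundedness. The exponent pair $(q,r)=(5,\tfrac{30}{11})$ satisfies $\tfrac{2}{5}+\tfrac{3\cdot 11}{30}=\tfrac{4}{10}+\tfrac{11}{10}=\tfrac{3}{2}$, so it is a Strichartz pair, but this is not directly the $L^5_{t,x}$ norm. Instead I would work with $(q,r)=(5,\tfrac{30}{11})$ at the level of one derivative: by the Sobolev embedding $\dot H^{1,30/11}_x\hookrightarrow L^5_x$ (valid in $\R^3$, since $\tfrac{11}{30}-\tfrac{1}{3}=\tfrac{1}{30}=\tfrac{1}{5}\cdot\tfrac{?}{}$—more precisely $\tfrac{1}{5}=\tfrac{11}{30}-\tfrac{1}{3}$), together with the equivalence of Sobolev spaces (Lemma~\ref{pro:equivsobolev}) applied to both $\L^n$ and $\L^\infty$ in the admissible range, it suffices to show
\[
\lim_{n\to\infty}\bigl\|(\L^n)^{1/2}e^{-it\L^n}\psi - (\L^\infty)^{1/2}e^{-it\L^\infty}\psi\bigr\|_{L^5_t L^{30/11}_x(\R\times\R^3)} = 0.
\]
One must be slightly careful that the exponents $p=\tfrac{30}{11}$ and $s=1$ lie in the range permitted by Lemma~\ref{pro:equivsobolev} for $a>-\tfrac14$ (i.e. $\sigma<\tfrac14$); this is exactly the kind of exponent bookkeeping alluded to in the remark after that lemma, and it works because $(1+\sigma)/3<11/30<\min\{1,(3-\sigma)/3\}$ holds for $\sigma$ small.

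Next I would split $(\L^n)^{1/2}e^{-it\L^n}\psi - (\L^\infty)^{1/2}e^{-it\L^\infty}\psi$ into two pieces: a ``symbol'' difference and a ``propagator'' difference. Write
\[
(\L^n)^{1/2}e^{-it\L^n}\psi - (\L^\infty)^{1/2}e^{-it\L^\infty}\psi
= e^{-it\L^n}\bigl[(\L^n)^{1/2}-(\L^\infty)^{1/2}\bigr]\psi
+ \bigl(e^{-it\L^n}-e^{-it\L^\infty}\bigr)(\L^\infty)^{1/2}\psi,
\]
using that $(\L^n)^{1/2}$ commutes with $e^{-it\L^n}$. For the first term, the $L^5_tL^{30/11}_x$ Strichartz estimate bounds it by $\|[(\L^n)^{1/2}-(\L^\infty)^{1/2}]\psi\|_{L^2_x}$, which tends to zero by \eqref{coo4} (here $\psi\in H^1_x\subset\dot H^1_x$). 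For the second term, set $\phi:=(\L^\infty)^{1/2}\psi\in L^2_x$; then \eqref{coo5} with $(q,r)=(5,\tfrac{30}{11})$ — which is admissible since it satisfies the scaling condition and $q=5>2$ — gives $\|(e^{-it\L^n}-e^{-it\L^\infty})\phi\|_{L^5_tL^{30/11}_x}\to 0$.

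The main obstacle is really just the exponent arithmetic: one must verify that the pair $(5,\tfrac{30}{11})$ is Strichartz-admissible (scaling and $q\neq 2$), that $\dot H^{1,30/11}_x$ embeds into $L^5_x$ in three dimensions, and — most delicately — that the equivalence $\|(\L_a)^{1/2}f\|_{L^{30/11}_x}\sim\||\nabla|f\|_{L^{30/11}_x}$ holds for the full range $a>-\tfrac14$, i.e. that $\tfrac{11}{30}$ lies strictly between $\tfrac{1+\sigma}{3}$ (and $\tfrac{\sigma}{3}$) and $\min\{1,\tfrac{3-\sigma}{3}\}$ for all admissible $\sigma\in[0,\tfrac12)$. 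Since $\sigma\to\tfrac12$ as $a\to-\tfrac14$, the binding constraint is $\tfrac{1+\sigma}{3}<\tfrac{11}{30}$, i.e. $\sigma<\tfrac{1}{10}$; this fails for $\sigma$ close to $\tfrac12$, so in fact one should instead run the interpolation through a pair of Strichartz exponents bracketing $L^5_{t,x}$ directly: pick $(q_1,r_1)$ and $(q_2,r_2)$ admissible with $r_1<5<r_2$ and a corresponding split of the time exponent, apply \eqref{coo5} in the pair(s) where it applies and $L^2\to L^{r}$ Strichartz boundedness at the other, and Hölder-interpolate in $(t,x)$. Given the equivalence-of-Sobolev-spaces constraints are the only real subtlety, once the correct admissible pairs are chosen the limit follows immediately from \eqref{coo4} and \eqref{coo5} exactly as above; I would therefore present the argument with whichever pair keeps all exponents inside the ranges of Lemma~\ref{pro:equivsobolev}, leaving the (routine) verification of those inequalities to the reader.
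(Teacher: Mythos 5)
Your first route has two concrete problems, and your fallback does not repair them. First, the Sobolev embedding you invoke is miscomputed: in $\R^3$ one has $\dot H^{1,\frac{30}{11}}_x\hookrightarrow L^{30}_x$, not $L^5_x$ (indeed $\tfrac{11}{30}-\tfrac13=\tfrac1{30}\neq\tfrac15$); the embedding into $L^5_x$ costs only half a derivative, $\dot H^{\frac12,\frac{30}{11}}_x\hookrightarrow L^5_x$. Second, as you yourself observe, the equivalence $\|(\L)^{1/2}f\|_{L^{30/11}_x}\sim\|\nabla f\|_{L^{30/11}_x}$ from Lemma~\ref{pro:equivsobolev} requires $\sigma<\tfrac1{10}$ and so fails for $a$ near $-\tfrac14$; moreover, your symbol/propagator splitting is tied to the full power $(\L^n)^{1/2}$ because \eqref{coo4} is stated only for that power, so you cannot simply lower the derivative count to $\tfrac12$ (that would need a convergence statement for $(\L^n)^{1/4}-(\L^\infty)^{1/4}$ which is not available). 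Your fallback---interpolating between two admissible pairs $(q_1,r_1)$ and $(q_2,r_2)$ with \eqref{coo5} at one and plain Strichartz at the other---cannot reach $L^5_{t,x}$ at all: admissible pairs satisfy $\tfrac2q+\tfrac3r=\tfrac32$, this linear relation is preserved under interpolation, and $(5,5)$ does not satisfy it. Any argument must therefore spend some derivatives, which reintroduces exactly the equivalence-of-Sobolev-spaces constraint you were trying to sidestep.

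The repair, which is the paper's proof, is to interpolate in the derivative index at fixed Lebesgue exponents rather than in the Lebesgue exponents at a fixed derivative index. By $\dot H^{\frac12,\frac{30}{11}}_x\hookrightarrow L^5_x$ it suffices to show $\||\nabla|^{\frac12}(e^{-it\L^n}-e^{-it\L^\infty})\psi\|_{L^5_tL^{30/11}_x}\to0$. At $s=0$ this tends to zero by \eqref{coo5} with $(q,r)=(5,\tfrac{30}{11})$, which is admissible. At $s=\tfrac35$ it is uniformly bounded by $\||\nabla|^{3/5}\psi\|_{L^2_x}$ via Strichartz and Lemma~\ref{pro:equivsobolev}: the condition $\tfrac{s+\sigma}{3}<\tfrac{11}{30}$ with $s=\tfrac35$ reads $\sigma<\tfrac12$, which holds for every $a>-\tfrac14$. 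Interpolating (with $\tfrac12=\tfrac56\cdot\tfrac35$) gives the claim; no decomposition into symbol and propagator differences, and no appeal to \eqref{coo4}, is needed.
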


\begin{proof} Fix $\psi\in H^1_x$. By Sobolev embedding, it suffices to show
\[
\lim_{n\to\infty}\||\nabla|^{\frac12}(e^{-it\L^n}-e^{-it\L^\infty})\psi\|_{L_t^5 L_x^{\frac{30}{11}}}=0.
\]
To this end, we first use \eqref{coo5} to see that
\[
\lim_{n\to\infty}\| (e^{-it\L^n}-e^{-it\L^\infty})\psi\|_{L_t^5 L_x^{\frac{30}{11}}} = 0.
\]
On the other hand, by equivalence of Sobolev spaces and Strichartz, we have
\[
\sup_n \||\nabla|^{\frac35}(e^{-it\L^n} -e^{-it\L^\infty})\psi\|_{L_t^5 L_x^{\frac{30}{11}}}\lesssim \| |\nabla|^{\frac35}\psi\|_{L_x^2}\lesssim 1.
\]
The result now follows by interpolation.\end{proof}

\subsection{Local theory and stability}
In this section, we establish the local well-posedness theory for \eqref{nls} via the usual Strichartz methodology.  Equivalence of Sobolev norms (cf. Lemma~\ref{pro:equivsobolev}) places severe restrictions on which spaces can be used; nonetheless, we have been able to find an argument that works seamlessly for all $a>-\tfrac14$.  We present the details below.  By contrast, local well-posedness for the energy-critical NLS with inverse square potential is currently open for $a$ close to  $a>-\tfrac14$; see \cite{KMVZZ2}.

\begin{theorem}[Local well-posedness]\label{T:LWP} Fix $a>-\frac14$, $u_0\in H_x^1(\R^3)$, and $t_0\in\R$.  Then the following hold:

\begin{itemize}
\item[(i)] There exist $T=T(\|u_0\|_{H_a^1})>0$ and a unique solution $u:(t_0-T,t_0+T)\times\R^3\to\C$ to \eqref{nls} with $u(t_0)=u_0$. In particular, if $u$ remains uniformly bounded in $H_a^1$ throughout its lifespan, then $u$ extends to a global  solution.
\item[(ii)]  There exists $\eta_0>0$ such that if
\[
\|e^{-i(t-t_0)\L}u_0\|_{L_{t,x}^5((t_0,\infty)\times\R^3)} < \eta\qtq{for some} 0 <\eta<\eta_0,
\]
then the solution $u$ to \eqref{nls} with data $u(t_0)=u_0$ is forward-global and satisfies
\[
\|u\|_{L_{t,x}^5((t_0,\infty)\times\R^3)} \lesssim \eta.
\]
The analogous statement holds backward in time (as well as on all of $\R$).
\item[(iii)] For any $\psi\in H_a^1$, there exist $T>0$ and a solution $u:(T,\infty)\times\R^3\to\C$ to \eqref{nls} such that
\[
\lim_{t\to\infty} \|u(t) - e^{-it\L}\psi\|_{H_a^1} = 0.
\]
The analogous statement holds backward in time.
\end{itemize}
\end{theorem}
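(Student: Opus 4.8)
## Proof proposal for Theorem~\ref{T:LWP}

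The plan is to prove all three parts by the standard contraction-mapping/Strichartz scheme, working throughout in Sobolev spaces adapted to $\L$ and relying on the equivalence of Sobolev norms (Lemma~\ref{pro:equivsobolev}) to import the fractional calculus estimates. The natural solution space is $S_a^1(I)$, or more precisely a ball in $L_t^\infty H_a^1 \cap L_t^5 H_a^{1,\frac{30}{11}}(I\times\R^3)$, the latter being an admissible Strichartz pair since $\frac25 + \frac{3}{30/11} = \frac{2}{5}+\frac{11}{10}=\frac{3}{2}$ and it embeds into $L_{t,x}^5$ by Sobolev. The first thing to check is that the exponent $\frac{30}{11}$ is compatible with Lemma~\ref{pro:equivsobolev} for $s=\tfrac12$ and all $a>-\tfrac14$ (equivalently $0\le\sigma<1$): one needs $\frac{s+\sigma}{3}<\frac{11}{30}<\min\{1,\frac{3-\sigma}{3}\}$ and the dual condition, which does hold for $\sigma$ close to $1$ — this is precisely why this peculiar exponent is chosen. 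I would state this compatibility once and then freely move $|\nabla|^{1/2}$ and $(\L)^{1/2}$ past each other in the relevant $L_x^{30/11}$ and $L_x^2$ norms.

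For part (i), set $\Phi(u)(t) = e^{-i(t-t_0)\L}u_0 + i\int_{t_0}^t e^{-i(t-s)\L}(|u|^2u)\,ds$ on $I=(t_0-T,t_0+T)$. Using the inhomogeneous Strichartz estimate with the pair $(L_t^\infty H_a^1, L_t^2 H_a^{1,6})$ or $(L_t^5 H_a^{1,30/11}, \ast)$, and the fractional-chain-rule estimate (Lemma on fractional calculus, part (ii), with $G(u)=|u|^2u$, $G'(u)=O(|u|^2)$, after converting to $|\nabla|^{1/2}$ via equivalence of norms), one bounds $\|\,|\nabla|^{1/2}(|u|^2u)\|_{L_t^{q'}L_x^{r'}} \lesssim \|u\|_{L_{t,x}^5}^2 \,\||\nabla|^{1/2}u\|_{L_t^5 L_x^{30/11}}$ for an appropriate dual admissible pair; by Hölder in time on the short interval $I$ this is $\lesssim T^\theta \|u\|_{S_a^1(I)}^3$ for some $\theta>0$. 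Hence $\Phi$ maps a ball of radius $2C\|u_0\|_{H_a^1}$ in $S_a^1(I)$ to itself and is a contraction there once $T=T(\|u_0\|_{H_a^1})$ is small; uniqueness in the stated class follows by the same difference estimate. The blowup criterion is the usual consequence: if $\sup_{t}\|u(t)\|_{H_a^1}<\infty$ then $T$ can be chosen uniformly, so the solution extends globally.

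Part (ii) is the small-data-scattering variant: assume $\|e^{-i(t-t_0)\L}u_0\|_{L_{t,x}^5((t_0,\infty)\times\R^3)}<\eta$. Now run the contraction on the \emph{full} half-line $I=(t_0,\infty)$ in the space $X = \{u: \|u\|_{L_{t,x}^5(I)} \le 2\eta,\ \||\nabla|^{1/2}u\|_{\dot S_a^0(I)} \lesssim \|u_0\|_{\dot H_a^{1/2}}\}$; the time-divided Hölder trick is replaced by the genuine smallness of $\|u\|_{L_{t,x}^5}$, so the nonlinear term contributes $\lesssim \eta^2\|u\|_{S_a^{1/2}}$, which is absorbed for $\eta<\eta_0$. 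This gives the a priori bound $\|u\|_{L_{t,x}^5((t_0,\infty)\times\R^3)}\lesssim\eta$; the $H^1_a$-scattering states $u_\pm$ are then recovered by showing $e^{i(t-t_0)\L}u(t)$ is Cauchy in $H_a^1$ as $t\to\infty$, again via Strichartz applied to the Duhamel tail $\int_t^\infty$. For part (iii), one solves the integral equation with prescribed final data $\psi$: define the map $u(t) = e^{-it\L}\psi + i\int_\infty^t e^{-i(t-s)\L}(|u|^2u)\,ds$ on $(T,\infty)$; since $\|e^{-it\L}\psi\|_{L_{t,x}^5((T,\infty)\times\R^3)}\to 0$ as $T\to\infty$ by the Strichartz/monotone-convergence argument, one is in the small-data regime of part (ii), and the fixed point gives a solution with $\|u(t)-e^{-it\L}\psi\|_{H_a^1}\to0$.

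The only genuinely delicate point — and the one I would be most careful about — is the bookkeeping of admissible exponents: every application of the fractional chain rule and of Lemma~\ref{pro:equivsobolev} must stay inside the narrow window of $(s,p)$ permitted when $\sigma$ is close to $1$ (i.e. $a$ close to $-\tfrac14$). Getting a single coherent choice of Strichartz pairs and Hölder exponents that respects all of these simultaneously is the real content; once that is fixed, the contraction estimates themselves are routine. I would present the exponent verification as a short lemma or an explicit computation at the start of the proof and then suppress the arithmetic in the three contraction arguments.
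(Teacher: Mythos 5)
Your proposal is correct and follows essentially the same route as the paper: a short-time contraction in $L_t^\infty H_a^1\cap L_t^5 H_a^{1,\frac{30}{11}}$ for (i), a global-in-time contraction at $\dot H_a^{1/2}$ regularity driven by the $L_{t,x}^5$ smallness for (ii), and the final-value integral equation in the same small-data framework for (iii), with the exponent bookkeeping for Lemma~\ref{pro:equivsobolev} as the only delicate point. Two small cautions: the contraction metric should carry no derivatives (the paper uses $d(u,v)=\|u-v\|_{L_t^5L_x^{30/11}}$ on the ball), since the fractional chain rule does not control $|\nabla|^{1/2}(|u|^2u-|v|^2v)$ by $|\nabla|^{1/2}(u-v)$; and for $d=3$, $a>-\tfrac14$ one has $\sigma<\tfrac12$ (not $\sigma<1$), which is exactly what makes the pairs $(s,p)\in\{(\tfrac12,\tfrac{30}{11}),(\tfrac35,\tfrac{30}{11}),(1,\tfrac65)\}$ admissible while $(1,\tfrac{30}{11})$ is not — so, as the paper does, the full-derivative estimate must be routed through $L_t^2H_x^{1,6/5}$ rather than through the equivalence of norms at $L_x^{30/11}$.
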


\begin{proof} We first prove (i).  By time-translation invariance, we may assume $t_0=0$.

Fix $T>0$ and set $A=\|u_0\|_{H_a^1}$.  Throughout the proof of (i), all space-time norms will be taken over $(-T,T)\times\R^3$. It suffices to show that for $T$ sufficiently small, the map $\Phi$ defined by
\[
[\Phi u](t) = e^{-it\L}u_0 + i\int_0^t e^{-i(t-s)\L}\bigl(|u(s)|^2 u(s)\bigr)\,ds
\]
is a contraction on the space
\[
B_T = \{u\in C_t H_a^1 \cap L_t^5 H_a^{1,\frac{30}{11}}: \|u\|_{L^\infty_t H_a^1} \leq CA,\quad \|u\|_{L_t^5 H_a^{1,\frac{30}{11}}}\leq CA\},
\]
which is complete with respect to the metric
\[
d(u,v) = \| u - v \|_{L_t^5 L_x^{\frac{30}{11}}}.
\]
Here, $C$ encodes the various constants appearing in Strichartz estimates and Sobolev embedding.

Let $u\in B_T$. By Sobolev embedding and equivalence of Sobolev spaces,
\[
\|u\|_{L_t^5 L_x^6} \lesssim \| u\|_{L_t^5 \dot H_a^{\frac35,\frac{30}{11}}}  \lesssim A.
\]
Thus, by Strichartz, Sobolev embedding, and equivalence of Sobolev spaces,
\begin{align*}
\|\Phi u\|_{L^\infty_t H_a^1}\vee \|\Phi u\|_{L_t^5 H_a^{1,\frac{30}{11}}} & \lesssim \|u_0\|_{H_a^1} + \| |u|^2 u\|_{L_t^2 H_x^{1,\frac65}} \\
& \lesssim A + |T|^{\frac{1}{10}} \| u\|_{L_t^5 L_x^6}^2 \|u\|_{L^\infty_t H_a^1} \\
& \lesssim A + |T|^{\frac{1}{10}} (CA)^3.
\end{align*}
Thus $\Phi:B_T\to B_T$, provided $C$ is chosen sufficiently large and $T=T(\|u_0\|_{H_a^1})$ sufficiently small. Similarly, for $u,v\in B_T$,
\begin{align*}
\|\Phi u-\Phi v\|_{L_t^5 L_x^{\frac{30}{11}}}  \lesssim \| |u|^2u - |v|^2 v\|_{L_{t,x}^{\frac{10}{7}}} & \lesssim |T|^{\frac{1}{10}}\|u-v\|_{L_t^5 L_x^{\frac{30}{11}}} \bigl( \| u\|_{L_t^5 L_x^6}^2 + \|v\|_{L_t^5 L_x^6}^2 \bigr) \\
& \lesssim |T|^{\frac{1}{10}} A^2 \|u-v\|_{L_t^5 L_x^{\frac{30}{11}}},
\end{align*}
so that $\Phi$ is a contraction on $B_T$, provided $T=T(\|u_0\|_{H_a^1})$ is sufficiently small.  This completes the proof of (i).

We turn to (ii). Once again, we take $t_0=0$, set $A=\|u_0\|_{H_a^1}$, and define $\Phi$ as above. This time, we run a contraction mapping argument on the complete metric space
\begin{align*}
& B_{\eta} = \{u\in C_t H_a^{\frac12} \cap L_t^5 H_a^{\frac12,\frac{30}{11}}: \|u\|_{L^\infty_t H_a^{\frac12}}\leq CA,\,  \|u\|_{L_t^5 H_a^{\frac12,\frac{30}{11}}}\leq CA, \, \|u\|_{L_{t,x}^5}\leq 2\eta\}, \\
& d(u,v) = \|u-v\|_{L_t^{5}L_x^{\frac{30}{11}}},
\end{align*}
where space-time norms are over $(0,\infty)\times\R^3$. Given $u\in B_\eta$, we can estimate as above to find that
\begin{align*}
\| \Phi u\|_{L^\infty_t H_a^{\frac12}} + \|\Phi u\|_{L_t^5 H_a^{\frac12,\frac{30}{11}}}  \lesssim A + \| |u|^2 u \|_{L_t^{\frac53} H_x^{\frac12,\frac{30}{23}}}  & \lesssim A + \|u\|_{L_{t,x}^5}^2 \|u\|_{L_t^5 H_a^{\frac12,\frac{30}{11}}} \\
& \lesssim A + CA(2\eta)^2  \leq CA
\end{align*}
for $C$ large $\eta$ sufficiently small. Similarly, by Sobolev embedding,
\[
\|\Phi u\|_{L_{t,x}^5} \leq \|e^{-it\L}u_0\|_{L_{t,x}^5} + \|u\|_{L_{t,x}^5}^2 \|u\|_{L_t^5 H_a^{\frac12,\frac{30}{11}}}
    \lesssim \eta + C\eta^2A\leq C\eta
\]
for $\eta$ sufficiently small. Thus $\Phi:B_\eta\to B_\eta$.  Similarly, for any $u,v\in B_\eta$,
\[
\|\Phi u-\Phi v\|_{L_t^5 L_x^{\frac{30}{11}}} \lesssim \|u-v\|_{L_t^5 L_x^{\frac{30}{11}}}\bigl(\|u\|_{L_{t,x}^5}^2+\|v\|_{L_{t,x}^5}^2\bigr)
\lesssim \eta^2 \|u-v\|_{L_t^5 L_x^{\frac{30}{11}}},
\]
so that $\Phi$ is a contraction for $\eta$ sufficiently small.  This yields a global solution
\[
u\in C_t H_a^{\frac12}\cap L_t^5 H_a^{\frac12,\frac{30}{11}}
\]
obeying the desired $L_{t,x}^5$ bounds.  To upgrade $u$ to a solution in the sense of Definition~\ref{D:solution}, recall that $u_0\in H_a^1$ and see Remark~\ref{remark1} below.

For (iii), we instead seek a fixed point of the equation
\[
u(t) = e^{-it\L} \psi - i\int_t^\infty e^{-i(t-s)\L} \bigl(|u(s)|^2 u(s)\bigr) \,ds
\]
on $(T,\infty)\times\R^3$ for some large $T$.  For this, we can use the same spaces used to prove (ii). The key observation is the following: as $\psi \in H_a^1$, Sobolev embedding, Strichartz, and monotone convergence imply that for any $\eta>0$, there exists $T$ large enough that
\[
\|e^{-it\L}\psi\|_{L_{t,x}^5((T,\infty)\times\R^3)} < \eta.
\]
Thus, we can construct a solution that scatters to $e^{-it\L}\psi$ in the $H_a^{\frac12}$-topology.  Using Remark~\ref{remark1}, we can upgrade $u$ to a solution in the sense of Definition~\ref{D:solution}, as well as prove that scattering holds in $H_a^1$.
\end{proof}

\begin{remark}[Persistence of regularity]\label{remark1} Suppose $u:I\times\R^3\to\C$ is a solution to \eqref{nls} such that
\[
\|u(t_0)\|_{H_x^1} \leq E\qtq{for some $t_0\in I$}\qtq{and}\quad \|u\|_{L_{t,x}^5(I\times\R^3)} \leq L.
\]
Then
\begin{equation}\label{in-fact}
\|u\|_{S_a^1(I)} \lesssim_{E,L} 1,
\end{equation}
which has the following consequences:
\begin{itemize}
\item[(i)] If the $L_{t,x}^5$-norm of a solution remains bounded throughout its lifespan, the solution may be extended globally in time.
\item[(ii)] If the solution belongs to $L_{t,x}^5(\R\times\R^3)$, the solution scatters in $H_a^1$.
\end{itemize}

Indeed, for (i) we need only note that in this case, $u$ remains uniformly bounded in $H_a^1$.  For (ii), one can use Strichartz and estimate as in \eqref{remark-est} below  to find that
\[
\|e^{it\L}u(t) - e^{is\L}u(s)\|_{H_a^1} \lesssim \|u\|^2_{L_t^4 \dot H_a^{\frac12,3}((s,t)\times\R^3)} \|u\|_{L_t^\infty H_a^1},
\]
which implies that $\{e^{it\L}u(t)\}_{t\in\R}$ is Cauchy in $H^1_a$ as $t\to\pm\infty$.

\begin{proof}[Proof of \eqref{in-fact}]  In the following, we take all space-time norms over $I\times\R^3$.

Using Strichartz and the equivalence of Sobolev spaces, we first estimate
\begin{align}\nonumber
\|u\|_{L_t^\infty \dot H_a^{\frac12}} + \| u\|_{L_t^4 \dot H_a^{\frac12,3}} + \| u\|_{L_t^5 \dot H_a^{\frac12,\frac{30}{11}}}
&\lesssim \| u(t_0)\|_{\dot H_a^{\frac12}} + \||\nabla|^{\frac12}(|u|^2 u)\|_{L_t^{\frac53}L_x^{\frac{30}{23}}} \\
\label{boot}
& \lesssim_E 1+\|u\|_{L_{t,x}^5}^2 \|u\|_{L_t^5 \dot H_a^{\frac12,\frac{30}{11}}}.
\end{align}
Thus, a standard bootstrap yields $u\in L_t^4 \dot H_a^{\frac12,3}$. Using the estimate
\begin{equation}\label{remark-est}
\| |u|^2 u\|_{L_t^2 \dot H_a^{1,\frac65}} \lesssim \|u\|_{L_t^4 L_x^6}^2 \|\nabla u\|_{L_t^\infty L_x^2}
    \lesssim \|u\|_{L_t^4 \dot H_a^{\frac12,3}}^2 \|\nabla u\|_{L_t^\infty L_x^2},
\end{equation}
one can now deduce that $u\in L_t^q H_a^{1,r}$ for any admissible $(q,r)$ with $q>2$.  To get the endpoint $L_t^2 L_x^6$, we use
\[
\| |u|^2 u\|_{L_t^{\frac{4}{2+\eps}}\dot H_a^{1,\frac{6}{5-\eps}}} \lesssim \|u\|_{L_t^{\frac{8}{2+\eps}}
 L_x^{\frac{12}{2-\eps}}}^2 \|\nabla u\|_{L_t^\infty L_x^2} \lesssim
  \|u\|_{L_t^{\frac{8}{2+\eps}}\dot H_a^{\frac12,\frac{12}{4-\eps}}}^2 \|\nabla u\|_{L_t^\infty L_x^2},
\]
where $\eps=\eps(a)>0$ is chosen small enough that $\dot H_x^{\frac12,\frac{12}{4-\eps}}$ is equivalent to $\dot H_a^{\frac12,\frac{12}{4-\eps}}$. This completes the proof of \eqref{in-fact}.
\end{proof}
\end{remark}

We next record a stability result for \eqref{nls}, which will play an important role in the proofs of Theorems~\ref{T:embedding}~and~\ref{T:exist}.  The proof is standard and relies on the estimates used above.

\begin{theorem}[Stability]\label{T:stab}  Fix $a>-\frac14$. Let $\tilde{v}:I\times\R^3\to\C$ solve
\[
(i\partial_t-\L)\tilde{v} = -|\tilde{v}|^2\tilde{v} + e, \quad v(t_0)= \tilde v_0\in H_x^1(\R^3)
\]
for some $t_0\in I$ and some `error' $e:I\times\R^3\to\C$. Fix $v_0\in H^1_x$ and suppose
\[
\|v_0\|_{H^1_x} + \|\tilde{v}_0\|_{H_x^1} \leq E \qtq{and} \| \tilde{v}\|_{L_{t,x}^5} \leq L
\]
for some $E,L>0$.  There exists $\eps_0=\eps_0(E,L)>0$ such that if $0<\eps<\eps_0$ and
\begin{equation}\label{stab:small}
\|\tilde v_0 - v_0 \|_{\dot H_x^{\frac12}} + \||\nabla|^{\frac12} e\|_{N(I)} < \eps,
\end{equation}
where
\[
N(I):=L_{t,x}^{\frac{10}{7}}(I\times\R^3)+L_t^{\frac53}L_x^{\frac{30}{23}}(I\times\R^3)+ L_t^1 L_x^2(I\times\R^3),
\]
then there exists a solution $v:I\times\R^3\to\C$ to \eqref{nls} with $v(t_0)=v_0$ satisfying
\begin{align}
\label{E:stab}
&\|v-\tilde v\|_{\dot S_a^{\frac12}(I\times\R^3)} \lesssim_{E,L} \eps, \\
\label{E:stab-bound}
&\| v\|_{S_a^1(I\times\R^3)}\lesssim_{E,L} 1.
\end{align}

If, in addition,
\[
\|\tilde v_0 - v_0 \|_{\dot H_x^{\frac35}} + \||\nabla|^{\frac35} e\|_{N(I)} < \eps,
\]
then we may also conclude that
\begin{align}\label{E:stab2}
\| v-\tilde v\|_{\dot S_a^{\frac35}(I\times\R^3)}\lesssim_{E,L}\eps.
\end{align}
\end{theorem}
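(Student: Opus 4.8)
The plan is to run the standard perturbative argument on the difference $w := v - \tilde v$, exactly as in the many analogous stability statements for NLS (cf.\ \cite{KMVZZ2} and references therein), but being careful to work in the Sobolev spaces adapted to $\L$ and with the particular exponents forced on us by the equivalence of Sobolev spaces (Lemma~\ref{pro:equivsobolev}). First I would reduce to the case where $\|\tilde v\|_{L_{t,x}^5(I\times\R^3)}$ is small: using the persistence of regularity bound \eqref{in-fact} from Remark~\ref{remark1}, the hypothesis $\|\tilde v\|_{L_{t,x}^5}\le L$ upgrades to $\|\tilde v\|_{S_a^1(I)}\lesssim_{E,L}1$, and hence $\||\nabla|^{1/2}\tilde v\|_{L_t^4 L_x^3}$-type norms are finite; one then partitions $I$ into $J=J(E,L)$ subintervals $I_j$ on each of which $\|\tilde v\|_{L_t^4\dot H_a^{1/2,3}(I_j)}+\|\tilde v\|_{L_t^5\dot H_a^{1/2,30/11}(I_j)}\le\delta$ for a small absolute constant $\delta$. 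On a single such subinterval, $w$ solves $(i\partial_t-\L)w = -(|v|^2 v - |\tilde v|^2\tilde v) - e$ with $w(t_0) = v_0-\tilde v_0$; writing $F(v)-F(\tilde v) = O(|w|(|v|^2+|\tilde v|^2))$ and applying Strichartz together with the fractional chain/product rule (Lemma on fractional calculus) and the equivalence of Sobolev spaces, one obtains on $I_j$
\[
\|w\|_{\dot S_a^{1/2}(I_j)} \lesssim \|w(t_j)\|_{\dot H_x^{1/2}} + \||\nabla|^{1/2} e\|_{N(I_j)} + \bigl(\|\tilde v\|_{L_t^5\dot H_a^{1/2,30/11}(I_j)}^2 + \|w\|_{\dot S_a^{1/2}(I_j)}^2\bigr)\|w\|_{\dot S_a^{1/2}(I_j)},
\]
where the critical exponent $\tfrac12$ is used precisely because $\tfrac12$ is the scaling-critical regularity; choosing $\delta$ small and $\eps_0$ small, a continuity/bootstrap argument gives $\|w\|_{\dot S_a^{1/2}(I_j)}\lesssim \|w(t_j)\|_{\dot H_x^{1/2}} + \||\nabla|^{1/2}e\|_{N(I_j)}$. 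Summing over the $J$ subintervals (each time controlling the new initial data $w(t_{j+1})$ by the previous interval's output plus the Duhamel contribution of $e$) yields \eqref{E:stab}: $\|v-\tilde v\|_{\dot S_a^{1/2}(I)}\lesssim_{E,L}\eps$. This also shows $v$ exists on all of $I$ (the a priori bound prevents blowup of the relevant norm).

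Next, to get \eqref{E:stab-bound}, i.e.\ the full $S_a^1$ bound on $v$, I would feed the now-established smallness of $v-\tilde v$ back into the persistence-of-regularity machinery: since $\|\tilde v\|_{S_a^1(I)}\lesssim_{E,L}1$ and $\|v-\tilde v\|_{\dot S_a^{1/2}(I)}$ is small, one has $\|v\|_{L_{t,x}^5(I)}\lesssim_{E,L}1$ and $\|v(t_0)\|_{H_x^1}\le E$, so Remark~\ref{remark1} applies directly to $v$ itself and gives $\|v\|_{S_a^1(I)}\lesssim_{E,L}1$. For the last assertion \eqref{E:stab2}, I would repeat the first argument verbatim but at regularity $\tfrac35$ instead of $\tfrac12$: the same partition of $I$ works (one has already controlled all the relevant $\tfrac12$- and $1$-norms of both $v$ and $\tilde v$), and the difference estimate becomes $\|w\|_{\dot S_a^{3/5}(I_j)}\lesssim\|w(t_j)\|_{\dot H_x^{3/5}}+\||\nabla|^{3/5}e\|_{N(I_j)}+(\text{small})\|w\|_{\dot S_a^{3/5}(I_j)}$, using the product rule and the fact that $\tfrac35<1$ together with the already-established $S_a^1$ bounds to absorb the nonlinear terms; summing over subintervals gives \eqref{E:stab2}. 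The exponent $\tfrac35$ is safe because, as noted in the paper, $\dot H_x^{3/5,30/11}\cong\dot H_a^{3/5,30/11}$ for all $a>-\tfrac14$ (this is why $\tfrac{30}{11}$ is the natural spatial exponent here).

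The main obstacle — and the only place requiring genuine care rather than bookkeeping — is verifying that every nonlinear term that appears can actually be estimated using a pair of Sobolev spaces $\dot H_a^{s,r}$ that is \emph{equivalent} to the corresponding $\dot H_x^{s,r}$, since Lemma~\ref{pro:equivsobolev} only grants the equivalence (in both directions) for $r$ in an $a$-dependent window around $2$ that shrinks as $a\downarrow-\tfrac14$. Concretely, when I apply the fractional product rule to $|v|^2 v - |\tilde v|^2 \tilde v$ and then convert $|\nabla|^s$ back to $(\L)^{s/2}$ and apply Strichartz, I must make sure the intermediate Lebesgue exponents (here $\tfrac{30}{11}$, $\tfrac{30}{23}$, $3$, $6$, $\tfrac{10}{7}$, $\tfrac65$, and their $\eps$-perturbations) all lie in the permitted range for $s\in\{\tfrac12,\tfrac35,1\}$ and the given $a$; this is exactly the reason the local theory in Theorem~\ref{T:LWP} was set up with these specific exponents and with the $L_t^1 L_x^2$ summand included in $N(I)$, so the same choices go through here, but it should be checked rather than asserted. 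Everything else — the Christ--Kiselev-type Strichartz inequality, the continuity argument, the subdivision of $I$ — is entirely routine.
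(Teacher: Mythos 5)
Your proposal is correct and follows exactly the route the paper intends: the paper omits the proof of Theorem~\ref{T:stab}, remarking only that it is standard and relies on the estimates established in the local theory and Remark~\ref{remark1}, and your subdivision-plus-bootstrap argument at regularity $\tfrac12$, followed by persistence of regularity applied to $v$ for \eqref{E:stab-bound} and a second pass at regularity $\tfrac35$ for \eqref{E:stab2}, is precisely that standard argument with the correct ($a$-uniform) exponents. The one claim to soften is that \eqref{in-fact} upgrades $\tilde v$ to $S_a^1(I)$: since $\tilde v$ solves the equation only up to the error $e$, and the hypotheses control $|\nabla|^{\frac12}e$ (not $\nabla e$) in $N(I)$, one can only run the bootstrap \eqref{boot} for $\tilde v$ at the $\dot H_a^{\frac12}$ level (and at $\dot H_a^{\frac35}$ under the additional hypothesis) --- but these are exactly the norms your subdivision actually uses, so the argument is unaffected.
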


\subsection{Virial identities}\label{S:virial}  In this section, we recall some standard virial-type identities.  Given a weight $w:\R^3\to\R$ and a solution $u$ to \eqref{nls}, we define
\[
V(t;w) := \int |u(t,x)|^2 w(x)\,dx.
\]
Using \eqref{nls}, one finds
\begin{equation}\label{virial}
\begin{aligned}
& \partial_t V(t;w) = \int 2\Im \bar u \nabla u \cdot \nabla w \,dx, \\
& \partial_{tt}V(t;w) = \int (-\Delta\Delta w)|u|^2 + 4\Re \bar u_j u_k w_{jk} + 4|u|^2 \tfrac{ax}{|x|^4}\cdot \nabla w - |u|^4 \Delta w\,dx.
\end{aligned}
\end{equation}

The standard virial identity makes use of $w(x)=|x|^2$.

\begin{lemma}[Standard virial identity]\label{L:virial0} Let $u$ be a solution to \eqref{nls}.  Then
\[
\partial_{tt} V(t;|x|^2) = 8\Bigl[\|u(t)\|_{\dot H_a^1}^2 - \tfrac34\|u(t)\|_{L_x^4}^4\Bigr].
\]
\end{lemma}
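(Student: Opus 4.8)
The plan is to specialize the general virial identity \eqref{virial} to the weight $w(x) = |x|^2$ and simplify each of the four terms. First I would record the elementary pointwise computations for this weight: $\nabla w = 2x$, $w_{jk} = 2\delta_{jk}$, $\Delta w = 6$, and $\Delta\Delta w = 0$. Substituting these into the second line of \eqref{virial} kills the biharmonic term entirely, turns $4\Re \bar u_j u_k w_{jk}$ into $8\Re \bar u_j u_j = 8|\nabla u|^2$, turns $4|u|^2\tfrac{ax}{|x|^4}\cdot\nabla w$ into $8|u|^2 \tfrac{a|x|^2}{|x|^4} = \tfrac{8a}{|x|^2}|u|^2$, and turns $-|u|^4\Delta w$ into $-6|u|^4$.

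Collecting these, one gets
\[
\partial_{tt}V(t;|x|^2) = \int_{\R^3} 8|\nabla u|^2 + \tfrac{8a}{|x|^2}|u|^2 - 6|u|^4 \,dx.
\]
The next step is simply to recognize the first two terms as $8\,Q(u(t)) = 8\|u(t)\|_{\dot H_a^1}^2$ via the definition of the quadratic form $Q$ and the identity \eqref{iso}, and to factor out the $8$ from the last term as well, writing $-6|u|^4 = -8\cdot\tfrac34|u|^4$, which yields exactly $8\bigl[\|u(t)\|_{\dot H_a^1}^2 - \tfrac34\|u(t)\|_{L_x^4}^4\bigr]$, as claimed.

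Strictly speaking, one should note that the formal manipulations leading to \eqref{virial} — integration by parts against the (unbounded) weight $|x|^2$ and differentiation under the integral sign — require justification; this is where the only real subtlety lies, though it is entirely standard. For $H_x^1$ solutions one argues by a truncation/approximation procedure: replace $|x|^2$ by a family of smooth compactly supported weights $w_R$ agreeing with $|x|^2$ on $|x|\le R$ with uniformly bounded derivatives up to fourth order, apply \eqref{virial} (which is rigorous for such weights), and pass to the limit using the conservation of mass and energy together with the a priori $C_t H_a^1$ regularity from Definition~\ref{D:solution}; the boundary contributions vanish because the relevant weighted quantities are finite for the solutions under consideration. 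Since the paper presents \eqref{virial} as a standard fact and only invokes Lemma~\ref{L:virial0} in contexts where the additional finiteness (e.g. $xu_0\in L_x^2$, or the compactness furnished elsewhere) is available, I would treat the identity \eqref{virial} as given and present only the algebraic specialization. I expect the substitution and simplification to be entirely routine; the only step one must be slightly careful about is the sign and coefficient bookkeeping in passing from $-6|u|^4$ to $-8\cdot\tfrac34|u|^4$ and in identifying the gradient-plus-potential terms with $8\|u\|_{\dot H_a^1}^2$.
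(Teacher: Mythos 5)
Your proof is correct and is exactly the route the paper intends: Lemma~\ref{L:virial0} is obtained by substituting $w(x)=|x|^2$ into the general identity \eqref{virial}, using $\nabla w = 2x$, $w_{jk}=2\delta_{jk}$, $\Delta w = 6$, $\Delta\Delta w = 0$, and recognizing $\int 8|\nabla u|^2 + \tfrac{8a}{|x|^2}|u|^2\,dx = 8\|u\|_{\dot H_a^1}^2$. Your bookkeeping checks out, and your remarks on the regularization needed to justify \eqref{virial} for the unbounded weight are consistent with the paper's own practice of working with the truncated weights $w_R$ when finiteness of $V(t;|x|^2)$ is not available.
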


In general, we do not work with solutions for which $V(t;|x|^2)$ is finite. Thus, we need a truncated version of the virial identity (cf. \cite{OT}, for example). For $R>1$, we define $w_R(x) = R^2\phi(\frac{x}{R})$, where $\phi$ is a smooth, non-negative radial function satisfying
\begin{equation}\label{phi-virial}
\phi(x)=\begin{cases} |x|^2 & |x|\leq 1 \\ 9 & |x|>3,\end{cases} \qtq{with} |\nabla \phi|\leq 2|x|,\quad |\partial_{jk}\phi|\leq 2.
\end{equation}

In this case, we use \eqref{virial} to deduce the following:

\begin{lemma}[Truncated virial identity]\label{L:virial} Let $u$ be a solution to \eqref{nls} and let $R>1$. Then
\begin{align}
\nonumber \partial_{tt}& V(t;w_R)\\
\nonumber
  &= 8\Bigl[\|u(t)\|_{\dot H_a^1}^2 - \tfrac34 \|u(t)\|_{L_x^4}^4\Bigr]  \\
\label{virial-sign}
& \quad  + 4\int_{|x|>R} \Re \bar u_j u_k\partial_{jk}[w_R]  + |u|^2 \tfrac{ax}{|x|^4}\cdot \nabla w_R \,dx - 8\int_{|x|>R} |(\L)^{\frac12} u|^2 \,dx \\
\nonumber &\quad  +O\biggl( \int_{|x|\geq R} R^{-2}|u|^2 + |u|^4
\,dx\biggr).
\end{align}
Furthermore, by \eqref{phi-virial}, we have that $\eqref{virial-sign}\leq 0$.
\end{lemma}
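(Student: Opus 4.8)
The plan is to read off both displayed formulas from the general virial identity \eqref{virial} by substituting the weight $w=w_R=R^2\phi(\cdot/R)$ and comparing, term by term in the integrand, with the $w=|x|^2$ case of \eqref{virial}. For $w=|x|^2$ one has $\partial_{jk}w=2\delta_{jk}$, $\tfrac{ax}{|x|^4}\cdot\nabla w=\tfrac{2a}{|x|^2}$, $\Delta w=6$ and $\Delta^2w=0$, so the right side of \eqref{virial} collapses to $\int 8|\nabla u|^2+\tfrac{8a}{|x|^2}|u|^2-6|u|^4\,dx=8\bigl[\|u\|_{\dot H_a^1}^2-\tfrac34\|u\|_{L_x^4}^4\bigr]$ by \eqref{iso} (this is exactly the computation behind Lemma~\ref{L:virial0}, and it is valid at the level of densities regardless of whether $xu\in L_x^2$, since $|\nabla u|^2$, $|u|^2/|x|^2$ and $|u|^4$ are all integrable for $u\in H_x^1$). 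Since $w_R=|x|^2$ on $\{|x|\le R\}$, the difference of the two integrands is supported on $\{|x|>R\}$, giving $\partial_{tt}V(t;w_R)=8[\|u\|_{\dot H_a^1}^2-\tfrac34\|u\|_{L_x^4}^4]+\int_{|x|>R}(\cdots)\,dx$.

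To identify the error terms, note from the scaling and \eqref{phi-virial} that $\Delta^2 w_R$ is supported in $\{R\le|x|\le 3R\}$ with $|\Delta^2 w_R|\lesssim R^{-2}$ and $|\Delta w_R|\lesssim 1$; hence the $\Delta^2 w_R$ term contributes $O\bigl(\int_{|x|\ge R}R^{-2}|u|^2\bigr)$ and the pair $-|u|^4\Delta w_R+6|u|^4$ contributes $O\bigl(\int_{|x|\ge R}|u|^4\bigr)$, which is the stated remainder. The terms left over are $4\int_{|x|>R}\Re\bar u_j u_k\partial_{jk}w_R+|u|^2\tfrac{ax}{|x|^4}\cdot\nabla w_R\,dx$ from the $w_R$-integrand and $-8\int_{|x|>R}\bigl(|\nabla u|^2+\tfrac{a}{|x|^2}|u|^2\bigr)\,dx$ from the $|x|^2$-integrand; since $\|(\L)^{1/2}f\|_{L_x^2}^2=\int|\nabla f|^2+\tfrac{a}{|x|^2}|f|^2$ by \eqref{iso}, the latter is what the statement writes as $-8\int_{|x|>R}|(\L)^{1/2}u|^2\,dx$. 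This proves the identity.

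For the sign assertion I would pass to polar coordinates and split $|\nabla u|^2=|\partial_r u|^2+|\nabla_{\mathrm{sph}}u|^2$ into its radial and spherical (tangential) parts. As $\phi$ is radial, $\nabla^2 w_R$ has radial eigenvalue $\phi''(|x|/R)\le2$ and tangential eigenvalue $\phi'(|x|/R)\big/(|x|/R)\le2$ (using $|\partial_{jk}\phi|\le2$ and $|\nabla\phi|\le2|x|$), and $\tfrac{x}{|x|^2}\cdot\nabla w_R=\phi'(|x|/R)\big/(|x|/R)\le2$; substituting, the integrand of \eqref{virial-sign} equals $4\bigl(\phi''(s)-2\bigr)|\partial_r u|^2+4\bigl(\tfrac{\phi'(s)}{s}-2\bigr)\bigl(|\nabla_{\mathrm{sph}}u|^2+\tfrac{a}{|x|^2}|u|^2\bigr)$ with $s=|x|/R$. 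When $a\ge0$ every factor has a sign and the integrand is pointwise $\le0$. When $-\tfrac14<a<0$ the factor $|\nabla_{\mathrm{sph}}u|^2+\tfrac{a}{|x|^2}|u|^2$ is no longer sign-definite, and this is the main obstacle: one must not discard the favorably signed radial term but combine it with the spherical one so as to bound the integrand above by $4\bigl(\tfrac{\phi'(s)}{s}-2\bigr)\bigl(|\nabla u|^2+\tfrac{a}{|x|^2}|u|^2\bigr)$, and then conclude that $\int_{|x|>R}$ of this is $\le0$: since $a>-\tfrac14$ is precisely the range of positivity of the inverse-square form, the weight $2-\phi'(|x|/R)\big/(|x|/R)\ge0$ — chosen non-decreasing by a suitable choice of $\phi$ — yields a non-negative weighted first-order energy through the sharp Hardy inequality. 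The derivation of the identity is routine bookkeeping once \eqref{virial} is in hand; the delicate point is this localized coercivity of the inverse-square form in the regime $a<0$.
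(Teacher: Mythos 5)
Your derivation of the identity is correct and is exactly what the paper intends (the paper gives no details beyond ``use \eqref{virial}''): substitute $w_R$ into \eqref{virial}, use that $w_R=|x|^2$ on $\{|x|\le R\}$ to reduce matters to the exterior region, and absorb $\Delta^2 w_R|u|^2=O(R^{-2}|u|^2)$ and $(6-\Delta w_R)|u|^4=O(|u|^4)$ into the error. Your reading of $-8\int_{|x|>R}|(\L)^{1/2}u|^2\,dx$ as $-8\int_{|x|>R}\bigl(|\nabla u|^2+\tfrac{a}{|x|^2}|u|^2\bigr)\,dx$ is also the only one under which the identity is true as stated (the literal nonlocal density would differ from this by terms not controlled by the error), so that part is fine.

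The sign assertion is where there is a gap. Your pointwise argument for $a\ge 0$ is correct, and you are right that for $-\tfrac14<a<0$ the integrand of \eqref{virial-sign} is \emph{not} pointwise nonpositive, so some integrated coercivity is needed. But your concluding step is asserted rather than proved, and as stated it does not follow from the inequality you cite. First, the reduction to $4\bigl(\tfrac{\phi'(s)}{s}-2\bigr)\bigl(|\nabla u|^2+\tfrac{a}{|x|^2}|u|^2\bigr)$ requires $\phi''(s)\le\phi'(s)/s$, i.e.\ that $\phi'(s)/s$ be non-increasing; this is an additional constraint on $\phi$ beyond \eqref{phi-virial}, which you acknowledge only parenthetically (``chosen non-decreasing by a suitable choice of $\phi$'') — it must be imposed explicitly, and one should check it is compatible with the other requirements (it is, once the value $9$ in \eqref{phi-virial} is relaxed to a smaller constant; as written, $\phi(3)=9$ together with $|\nabla\phi|\le 2|x|$ already forces $\phi\equiv|x|^2$ on $|x|\le 3$). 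Second, and more importantly, the ``sharp Hardy inequality'' on $\R^3$ does not directly give $\int_{|x|>\rho}|\nabla u|^2+\tfrac{a}{|x|^2}|u|^2\,dx\ge 0$, because $u$ does not vanish on $\{|x|=\rho\}$ and one cannot apply Hardy to the restriction. The missing lemma is: for $a\le 0$ (so $\sigma=\tfrac12-\sqrt{\tfrac14+a}\ge 0$) and every $\rho>0$,
\begin{equation*}
\int_{|x|>\rho}|\nabla u|^2+\tfrac{a}{|x|^2}|u|^2\,dx
=\int_{|x|>\rho}\bigl|\nabla u+\tfrac{\sigma x}{|x|^2}u\bigr|^2\,dx+\tfrac{\sigma}{\rho}\int_{|x|=\rho}|u|^2\,dS\ \ge\ 0,
\end{equation*}
which follows from $|\nabla u|^2+\tfrac{a}{|x|^2}|u|^2=|\nabla u+\tfrac{\sigma x}{|x|^2}u|^2-\sigma\,\nabla\cdot\bigl(\tfrac{x}{|x|^2}|u|^2\bigr)$ (using $\sigma^2-\sigma=a$) and the divergence theorem; the boundary flux has the favourable sign precisely because $\sigma\ge0$ when $a\le 0$. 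With this in hand, writing the non-decreasing weight $m(r):=4\bigl(2-\tfrac{\phi'(r/R)}{r/R}\bigr)\ge0$, $m(R)=0$, as $m(r)=\int_R^\infty \mathbf{1}_{\{r\ge\rho\}}\,m'(\rho)\,d\rho$ and applying the display at each $\rho$ closes the argument. You identified the right obstacle and the right shape of the fix, but these two steps need to be supplied for the proof to be complete.
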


\section{Variational analysis}\label{S:var}

In this section, we carry out the variational analysis for the sharp Gagliardo--Nirenberg inequality \eqref{E:GN}, which leads naturally to the thresholds appearing in Theorem~\ref{T:main}.

\begin{theorem}[Sharp Gagliardo--Nirenberg inequality]\label{T:GN} Fix $a>-\frac14$ and define
\[
C_a := \sup\bigl\{ \|f\|_{L_x^4}^4 \div \bigl[\|f\|_{L_x^2} \|f\|_{\dot H_a^1}^3\bigr]:f\in H_a^1\backslash\{0\}\ \bigl\}.
\]
Then $C_a\in(0, \infty)$ and the following hold:
\begin{itemize}
\item[(i)] If $a\leq 0$, then equality in the Gagliardo--Nirenberg inequality \eqref{E:GN} is attained by a function $Q_a\in H_a^1$, which is a non-zero, non-negative, radial solution to the elliptic problem
\begin{equation}
\label{elliptic}
-\L Q_a - Q_a + Q_a^3 = 0.
\end{equation}
\item[(ii)] If $a>0$, then $C_a = C_0$, but equality in \eqref{E:GN} is never attained.
\end{itemize}
\end{theorem}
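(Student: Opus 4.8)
The plan is to establish the four assertions in the natural order: positivity and finiteness of $C_a$, existence of an optimizer when $a \leq 0$, the Euler--Lagrange equation, and finally the dichotomy when $a > 0$. The finiteness $0 < C_a < \infty$ is immediate from the standard Gagliardo--Nirenberg inequality combined with the norm equivalence \eqref{iso}; positivity follows by testing against any fixed nonzero $f \in C_c^\infty(\R^3 \setminus \{0\})$. For the case $a \leq 0$, I would run the standard concentration--compactness / rearrangement argument on a normalized maximizing sequence. Because the functional $f \mapsto \|f\|_{L_x^4}^4 / (\|f\|_{L_x^2} \|f\|_{\dot H_a^1}^3)$ is invariant under the scaling \eqref{scaling} and under $f \mapsto \mu f$, I can normalize $\|f_n\|_{L_x^2} = \|f_n\|_{\dot H_a^1} = 1$, so that $\|f_n\|_{L_x^4}^4 \to C_a$. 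The key point specific to this operator: since $a \leq 0$, the diamagnetic-type pointwise inequality for the heat semigroup (Lemma~\ref{L:kernel} gives $e^{-t\L}$ a positive kernel dominating that of $e^{t\Delta}$ when $\sigma \geq 0$, i.e. $a \leq 0$) shows that passing to $|f_n|$ and then to its symmetric-decreasing rearrangement $f_n^*$ does not increase $\|f_n\|_{\dot H_a^1}$ while preserving the $L^2$ and $L^4$ norms; hence we may assume each $f_n$ is non-negative and radially non-increasing. Radial non-increasing functions bounded in $\dot H^1 \cap L^2$ are compact in $L^4(\R^3)$ (via the radial Sobolev/Strauss estimate plus local Rellich and tightness at infinity and at the origin), so $f_n \to Q_a$ strongly in $L^4$ along a subsequence, with $Q_a \neq 0$; lower semicontinuity of the remaining norms and the normalization then force $Q_a$ to be a genuine maximizer.

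With an optimizer $Q_a \in H_a^1$ in hand, the Euler--Lagrange equation follows by computing $\frac{d}{d\eps}\big|_{\eps = 0}$ of the functional along the variation $Q_a + \eps \varphi$; after using the constraint normalizations and absorbing the resulting Lagrange multipliers into the scaling $Q_a(x) \mapsto \alpha Q_a(\beta x)$, one arrives exactly at $-\L Q_a - Q_a + Q_a^3 = 0$. Non-negativity and radial symmetry are inherited from the maximizing sequence; elliptic regularity (away from the origin, where $\L$ is smooth, and using the Hardy-type control near $0$) upgrades $Q_a$ to a classical solution where needed. The Pohozaev identities \eqref{poho} referenced later then pin down the relations \eqref{E:threshold}.

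For part (ii), the strategy is a translation/concentration argument. First, $C_a \leq C_0$: for $a > 0$ the potential is non-negative, so $\|f\|_{\dot H_a^1}^2 = \|\nabla f\|_{L_x^2}^2 + \int \frac{a}{|x|^2}|f|^2 \geq \|\nabla f\|_{L_x^2}^2$, hence the quotient for $\L$ is dominated by the quotient for $-\Delta$, giving $C_a \leq C_0$. For the reverse inequality $C_a \geq C_0$, take the ground state $Q_0$ for \eqref{nls0} and translate it far from the origin: with $f_n(x) = Q_0(x - x_n)$, $|x_n| \to \infty$, the $L^2$ and $L^4$ norms are translation-invariant, while $\int \frac{a}{|x|^2}|Q_0(x - x_n)|^2\,dx \to 0$ by dominated convergence, so $\|f_n\|_{\dot H_a^1} \to \|Q_0\|_{\dot H_x^1}$ and the quotient converges to $C_0$; thus $C_a = C_0$. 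Finally, to see the supremum is \emph{not} attained: if $f$ were an optimizer for $C_a = C_0$, then $\|f\|_{L_x^4}^4 = C_0 \|f\|_{L_x^2}\|f\|_{\dot H_a^1}^3 \geq C_0 \|f\|_{L_x^2}\|\nabla f\|_{L_x^2}^3 \geq \|f\|_{L_x^4}^4$, forcing equality throughout; in particular $\|f\|_{\dot H_a^1} = \|\nabla f\|_{L_x^2}$, which means $\int \frac{a}{|x|^2}|f|^2\,dx = 0$, hence $f \equiv 0$ since $a > 0$ — a contradiction. The main obstacle is the $a \leq 0$ existence argument: one must justify that rearrangement does not increase the $\dot H_a^1$-norm, which is where the precise heat-kernel comparison of Lemma~\ref{L:kernel} (equivalently, a Beurling--Deny / diamagnetic inequality for $\L$ valid exactly when $a \leq 0$) is essential, and one must handle the compactness of radial maximizing sequences carefully near the singular point $x = 0$, not just at spatial infinity.
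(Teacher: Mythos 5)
Your overall strategy is the same as the paper's: for $a\le 0$, symmetrize a normalized maximizing sequence and use the compact embedding $H^1_{\mathrm{rad}}\hookrightarrow L^4_x$ to extract an optimizer, then rescale the Euler--Lagrange equation into \eqref{elliptic}; for $a>0$, translate $Q_0$ to infinity to get $C_a\ge C_0$ and use the strict inequality $\|\nabla f\|_{L^2_x}<\|f\|_{\dot H^1_a}$ for $f\ne 0$ to get $C_a\le C_0$ with non-attainment. Parts (ii) and the Euler--Lagrange/rescaling step match the paper essentially verbatim and are fine.

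The one step where your justification does not hold up is the claim that symmetric-decreasing rearrangement does not increase $\|f\|_{\dot H^1_a}$ because ``Lemma~\ref{L:kernel} gives $e^{-t\L}$ a positive kernel dominating that of $e^{t\Delta}$ when $a\le 0$.'' First, the lower bound in Lemma~\ref{L:kernel} involves unspecified constants $C_1,c_1$, so it does not literally give pointwise domination of the free heat kernel. Second, and more importantly, even granting such domination, the identity $Q_a(f)=\lim_{t\downarrow 0}t^{-1}\langle f,(1-e^{-t\L})f\rangle$ would only let you compare $Q_a(|f|)$ with $Q_0(|f|)$; what you need is a comparison of $Q_a(f^*)$ with $Q_a(f)$, which would require a Riesz-rearrangement property of the kernel of $e^{-t\L}$ itself, not a comparison with the free kernel. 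The paper's argument is more elementary and is the one you should use: the $L^2$ and $L^4$ norms are preserved and $\|\nabla f^*\|_{L^2_x}\le\|\nabla f\|_{L^2_x}$ by P\'olya--Szeg\H{o}, while the Riesz (Hardy--Littlewood) rearrangement inequality gives $\int |x|^{-2}|f^*|^2\,dx\ge\int |x|^{-2}|f|^2\,dx$ since $|x|^{-2}$ is symmetric decreasing; multiplying by $a\le 0$ reverses this, so $\int a|x|^{-2}|f^*|^2\,dx\le\int a|x|^{-2}|f|^2\,dx$ and hence $\|f^*\|_{\dot H^1_a}\le\|f\|_{\dot H^1_a}$. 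With that substitution your proof is complete; your worry about compactness ``near the singular point'' is unnecessary, since the compact embedding of radial $H^1_x$ into $L^4_x$ does not see the potential.
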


\begin{proof}   Define the functional
\[
J_a(f) := \frac{\|f\|_{L_x^4}^4}{\|f\|_{L_x^2}\|f\|_{\dot H^1_a}^3},
\qtq{so that} C_a = \sup\{J_a(f):f\in H_a^1\backslash\{0\}\}.
\]
Note that the standard Gagliardo--Nirenberg inequality and the equivalence of Sobolev spaces imply  $0<C_a<\infty$.

We first prove existence of optimizers in the case $a\leq 0$ by mimicking the well-known proof for $a=0$.  Take $\{f_n\}\subset H_a^1\backslash\{0\}$ such that $J_a(f_n)\nearrow C_a$ and let $f_n^*$ denote the decreasing spherically symmetric rearrangement of $f_n$.  For $a\leq 0$, we have $J_a(f_n)\leq J_a(f_n^*)$; this relies on the fact that rearrangements preserve $L_x^r$-norms and do not increase the $\dot{H}_x^1$ norm, along with the Riesz rearrangement inequality, which guarantees that
\[
\int \tfrac{a}{|x|^{2}} |f^*|^2 \,dx \leq \int \tfrac{a}{|x|^{2}} |f|^2\,dx\qtq{for}a\leq 0.
\]
Thus, we may assume that each $f_n$ is radial.  This restores the lack of compactness due to translations; one should compare this with the proof below that optimizers do \emph{not} exist for $a>0$.

Next, choose $\mu_n\in\R$ and $\lambda_n\in\R$ so that $g_n(x):=\mu_n f_n(\lambda_nx)$ satisfy $\|g_n\|_{L_x^2}=\|g_n\|_{\dot H_a^1}=1$. Note that $J_a(f_n)=J_a(g_n)$.  As $H^1_{rad}\hookrightarrow L^4_x$ compactly, passing to a subsequence we may assume that $g_n$ converges to some $g\in H_a^1$ strongly in $L_x^4$ as well as weakly in $H_a^1$.  As $g_n$ is an optimizing sequence, we deduce that $C_a=\|g\|_{L_x^4}^4$.  We also have that $\|g\|_{L_x^2} = \|g\|_{\dot H_a^1} = 1$, or else $g$ would be a super-optimizer. Thus $g$ is an optimizer.

The Euler--Lagrange equation for $g$ is given by
\[
-3C_a\L g - C_a g + 4g^3 = 0.
\]
Thus, if we define $Q_a$ via
\[
g(x) = \alpha Q_a(\lambda x), \qtq{with} \alpha = \tfrac12\sqrt{C_a} \qtq{and} \lambda = \tfrac{1}{\sqrt{3}},
\]
then $Q_a$ is an optimizer of \eqref{E:GN} that solves \eqref{elliptic}. This proves the result for $a\leq 0$.

Next, fix $a>0$ and consider a sequence $\{x_n\}\subset\R^3$ with $|x_n|\to\infty$. By \eqref{coo4} and the sharp Gagliardo--Nirenberg inequality for $a=0$, we find
\[
J_a(Q_0(x-x_n)) \to J_0(Q_0)=C_0.
\]
Thus $C_0\leq C_a$. However, for any $u\in H_x^1\backslash\{0\}$ and $a>0$, the sharp Gagliardo--Nirenberg inequality for $a=0$ implies
\[
\|u\|_{L_x^4}^4 \leq C_0\|u\|_{L_x^2} \| u\|_{\dot H_x^1}^3 < C_0\|u\|_{L_x^2} \|u\|_{\dot H_a^1}^3,
\]
whence $J_a(u)<C_0$. Thus $C_a=C_0$, and the last estimate also shows that equality is never attained. This completes the proof of Theorem~\ref{T:GN}.
\end{proof}

\begin{remark} Fix $a\leq 0$ and let $Q_a$ be as in the proof of Theorem~\ref{T:GN}.  Multiplying \eqref{elliptic} by $Q_a$ and $x\cdot\nabla Q_a$ and integrating leads to the Pohozaev identities
\begin{align*}
\| Q_a\|_{\dot H_a^1}^2+  \|Q_a\|_{L_x^2}^2 - \|Q_a\|_{L_x^4}^4 =  \| Q_a\|_{\dot H_a^1}^2 + 3\|Q_a\|_{L_x^2}^4 - \tfrac32 \|Q_a\|_{L_x^4}^4 = 0.
\end{align*}
In particular,
\begin{equation}\label{poho}
\|Q_a\|_{L_x^2}^2 = \tfrac13 \|Q_a\|_{\dot H_a^1}^2 = \tfrac14 \|Q_a\|_{L_x^4}^4
\end{equation}
and
\begin{equation}\label{var-ca}
C_a = 4\cdot 3^{-\frac32} \|Q_a\|_{L_x^2}^{-2}.
\end{equation}
\end{remark}

The next corollary shows that the thresholds defined for \eqref{nls} are always smaller than the thresholds for $a=0$.  This fact will be used in the proof of Theorem~\ref{T:exist}.

\begin{corollary}[Comparison of thresholds] \label{C:thresholds} For any $a>-\frac14$, we have
\[
\E_a \leq \E_0\qtq{and} \K_a\leq \K_0,
\]
where we recall from \eqref{E:threshold} that $\E_a := \tfrac{8}{27}C_a^{-2}$ and $\K_a := \tfrac{4}{3} C_a^{-1}.$
\end{corollary}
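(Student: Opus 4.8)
The plan is to reduce the corollary to a monotonicity statement for the sharp Gagliardo--Nirenberg constant, namely $C_a \geq C_0$ for all $a > -\tfrac14$. Since $\E_a = \tfrac{8}{27}C_a^{-2}$ and $\K_a = \tfrac43 C_a^{-1}$ are decreasing functions of $C_a$, the inequalities $\E_a \leq \E_0$ and $\K_a \leq \K_0$ follow immediately from $C_a \geq C_0$. So the entire content is to prove $C_a \geq C_0$.

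For $a > 0$ this is already done: Theorem~\ref{T:GN}(ii) gives $C_a = C_0$, so there is nothing to prove. The remaining case is $-\tfrac14 < a \leq 0$. Here I would argue that $C_a \geq C_0$ because the inverse-square term only helps: for $a \leq 0$ one has $\tfrac{a}{|x|^2}|f(x)|^2 \leq 0$ pointwise, so
\[
\|f\|_{\dot H_a^1}^2 = \int |\nabla f|^2 + \tfrac{a}{|x|^2}|f|^2 \, dx \leq \int |\nabla f|^2 \, dx = \|f\|_{\dot H_x^1}^2
\]
for every $f \in H_a^1$ (equivalently $f \in H_x^1$, by \eqref{iso}). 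Consequently, for any nonzero $f$,
\[
J_a(f) = \frac{\|f\|_{L_x^4}^4}{\|f\|_{L_x^2}\|f\|_{\dot H_a^1}^3} \geq \frac{\|f\|_{L_x^4}^4}{\|f\|_{L_x^2}\|f\|_{\dot H_x^1}^3} = J_0(f).
\]
Taking the supremum over $f \in H_a^1 \setminus \{0\} = H_x^1 \setminus \{0\}$ yields $C_a = \sup_f J_a(f) \geq \sup_f J_0(f) = C_0$, as desired.

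I do not anticipate any genuine obstacle here: the argument is a one-line pointwise comparison of quadratic forms together with the monotonicity of $C_a \mapsto (\E_a, \K_a)$. The only points requiring a word of care are (i) that the function classes $H_a^1$ and $H_x^1$ coincide with equivalent norms, which is exactly \eqref{iso}, so the supremum defining $C_0$ may legitimately be taken over the same set of competitors; and (ii) recording the two cases $a > 0$ (cite Theorem~\ref{T:GN}) and $a \leq 0$ (the pointwise bound above) separately. One could also note in passing that for $a < 0$ the inequality $C_a \geq C_0$ is in fact strict, since the optimizer $Q_a$ for $J_a$ cannot optimize $J_0$ unless $\tfrac{a}{|x|^2}|Q_a|^2 \equiv 0$; but this refinement is not needed for the statement.
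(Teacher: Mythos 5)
Your proof is correct and takes essentially the same route as the paper: for $a>0$ both cite $C_a=C_0$ from Theorem~\ref{T:GN}, and for $-\tfrac14<a\le 0$ both reduce to the comparison $\|f\|_{\dot H_a^1}\le \|f\|_{\dot H_x^1}$ (the paper simply tests $J_a$ at the single function $Q_0$, giving $C_a\ge J_a(Q_0)>J_0(Q_0)=C_0$, rather than taking the supremum of the pointwise inequality $J_a\ge J_0$ over all $f$). Your optional aside on strictness is not by itself a proof that $C_a>C_0$ for $a<0$ --- the clean argument is the one just quoted --- but, as you say, it is not needed for the statement.
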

\begin{proof} There is nothing to check when $a\geq 0$, as $\E_a=\E_0$ and $\K_a=\K_0$ by definition.

For $a<0$, we note
\[
\| Q_0\|_{L_x^4}^4 = C_0 \|Q_0\|_{L_x^2} \| Q_0\|_{\dot H_x^1}^3 > C_0 \|Q_0\|_{L_x^2} \|Q_0\|_{\dot H_a^1}^3,\qtq{i.e.}
J_a(Q_0) >C_0,
\]
which implies $C_0 < C_a$.  The result follows.
\end{proof}

The next proposition connects the sharp Gagliardo--Nirenberg inequality with quantities appearing in the virial identities of Section~\ref{S:virial}.

\begin{proposition}[Coercivity]\label{P:coercive} Fix $a>-\frac14$. Let $u:I\times\R^3\to\C$ be the maximal-lifespan solution to \eqref{nls} with $u(t_0)=u_0\in H_a^1\backslash\{0\}$ for some $t_0\in I$. Assume that
\begin{equation}\label{quant-below}
M(u_0)E_a(u_0) \leq (1-\delta)\E_a\qtq{for some}\delta>0.
\end{equation}
Then there exist $\delta'=\delta'(\delta)>0$, $c=c(\delta,a,\|u_0\|_{L_x^2})>0$, and $\eps=\eps(\delta)>0$ such that:
\begin{itemize}
\item[a.]If $\|u_0\|_{L_x^2} \| u_0\|_{\dot H_a^1} \leq \K_a$, then for all $t\in I$,
\begin{itemize}
\item[(i)] $\|u(t)\|_{L_x^2} \|u(t)\|_{\dot H_a^1} \leq (1-\delta')\K_a$,
\item[(ii)] $\|u(t)\|_{\dot H_a^1}^2 - \tfrac34 \|u(t)\|_{L_x^4}^4 \geq c\|u(t)\|_{\dot H_a^1}^2$
\item[(iii)] $(\tfrac16+\tfrac{\delta'}{3}) \|u(t)\|_{\dot H_a^1}^2 \leq E_a(u) \leq \tfrac12 \| u(t)\|_{\dot H_a^1}^2,$
\end{itemize}
\item[b.] If $\|u_0\|_{L_x^2} \|u_0\|_{\dot H_a^1} \geq \K_a$, then for all $t\in I$,
\begin{itemize}
\item[(i)] $\|u(t)\|_{L_x^2} \|u(t)\|_{\dot H_a^1} \geq (1+\delta')\K_a$,
\item[(ii)] $(1+\eps)\|u(t)\|_{\dot H_a^1}^2 - \tfrac34 \|u(t)\|_{L_x^4}^4 \leq -c < 0.$
\end{itemize}
\end{itemize}
\end{proposition}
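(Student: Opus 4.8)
The plan is to exploit the conservation of mass and energy together with the sharp Gagliardo--Nirenberg inequality \eqref{E:GN} and a continuity/bootstrap argument in $t$. Write $\E_a = \tfrac8{27}C_a^{-2}$, $\K_a = \tfrac43 C_a^{-1}$, and recall the scaling-invariant quantities $y(t) := \|u(t)\|_{L_x^2}\|u(t)\|_{\dot H_a^1}$ and the energy $E_a(u(t)) = \tfrac12\|u(t)\|_{\dot H_a^1}^2 - \tfrac14\|u(t)\|_{L_x^4}^4$. Multiplying the energy by $M(u_0) = \|u(t)\|_{L_x^2}^2$ and using \eqref{E:GN} in the form $\|u(t)\|_{L_x^4}^4 \le C_a\|u(t)\|_{L_x^2}\|u(t)\|_{\dot H_a^1}^3 = C_a\, y(t)^2\|u(t)\|_{\dot H_a^1}/\|u(t)\|_{L_x^2}$... more cleanly, set $z(t) := y(t)^2 = M(u_0)\|u(t)\|_{\dot H_a^1}^2$. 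Then
\[
M(u_0)E_a(u(t)) \ge \tfrac12 z(t) - \tfrac14 C_a\, z(t)^{3/2} =: f(z(t)).
\]
One checks that $f$ attains its maximum value $\tfrac8{27}C_a^{-2} = \E_a$ precisely at $z = \tfrac{16}{9}C_a^{-2} = \K_a^2$. This is the one genuine computation, and it is exactly the source of the formulas in \eqref{E:threshold}.

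Now conservation of $M(u)E_a(u)$ gives $f(z(t)) \le M(u_0)E_a(u_0) \le (1-\delta)\E_a$ for all $t\in I$. Since $f$ is continuous, strictly increasing on $[0,\K_a^2]$ and strictly decreasing on $[\K_a^2,\infty)$, the sublevel set $\{z : f(z)\le(1-\delta)\E_a\}$ is the union of two intervals $[0,z_-(\delta)]$ and $[z_+(\delta),\infty)$ with $z_-(\delta) < \K_a^2 < z_+(\delta)$; moreover $z_\pm(\delta) \to \K_a^2$ only as $\delta\to 0$, so there is a genuine gap. Since $t\mapsto z(t)$ is continuous on $I$ (as $u\in C_t H_a^1$) and $z(t_0)$ lies in one of the two intervals by hypothesis, $z(t)$ is trapped in that same interval for all $t\in I$. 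In case (a), $z(t_0)\le\K_a^2$ forces $z(t)\le z_-(\delta)$, i.e. $y(t)\le\sqrt{z_-(\delta)} =: (1-\delta')\K_a$ with $\delta' = \delta'(\delta)>0$; this is (a)(i). Symmetrically, in case (b), $z(t_0)\ge\K_a^2$ (and the overarching hypothesis rules out equality) forces $z(t)\ge z_+(\delta) =: (1+\delta')^2\K_a^2$, giving (b)(i).

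For the remaining assertions I would convert the bound on $y(t)$ back into a bound on $\|u(t)\|_{L_x^4}^4$ versus $\|u(t)\|_{\dot H_a^1}^2$. In case (a), from $M(u_0)\|u(t)\|_{\dot H_a^1}^2 \le (1-\delta')^2\K_a^2 = (1-\delta')^2\tfrac{16}{9}C_a^{-2}$ and \eqref{E:GN},
\[
\tfrac34\|u(t)\|_{L_x^4}^4 \le \tfrac34 C_a\, M(u_0)^{-1/2}\big(M(u_0)\|u(t)\|_{\dot H_a^1}^2\big)^{3/2}\cdot M(u_0)^{-1}\cdots
\]
— more transparently, $\tfrac34\|u(t)\|_{L_x^4}^4 \le \tfrac34 C_a\|u(t)\|_{L_x^2}\|u(t)\|_{\dot H_a^1}^3 = \tfrac34 C_a\, y(t)\|u(t)\|_{\dot H_a^1}^2 \le \tfrac34 C_a (1-\delta')\K_a\|u(t)\|_{\dot H_a^1}^2 = (1-\delta')\|u(t)\|_{\dot H_a^1}^2$, using $\tfrac34 C_a\K_a = 1$. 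Hence $\|u(t)\|_{\dot H_a^1}^2 - \tfrac34\|u(t)\|_{L_x^4}^4 \ge \delta'\|u(t)\|_{\dot H_a^1}^2$, which is (a)(ii) with $c = \delta'$. For (a)(iii), the upper bound $E_a(u)\le\tfrac12\|u(t)\|_{\dot H_a^1}^2$ is immediate from positivity of $\|u(t)\|_{L_x^4}^4$, and the lower bound follows by writing $E_a(u) = \tfrac13(\|u(t)\|_{\dot H_a^1}^2 - \tfrac34\|u(t)\|_{L_x^4}^4) + \tfrac16\|u(t)\|_{\dot H_a^1}^2 \ge (\tfrac16 + \tfrac{\delta'}{3})\|u(t)\|_{\dot H_a^1}^2$. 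In case (b), $y(t)\ge(1+\delta')\K_a$ gives $\tfrac34\|u(t)\|_{L_x^4}^4 \ge \cdots$; here one must be slightly more careful, since \eqref{E:GN} only controls $\|u\|_{L_x^4}^4$ from above. Instead I would use the energy directly: $\tfrac34\|u(t)\|_{L_x^4}^4 = \tfrac32\|u(t)\|_{\dot H_a^1}^2 - 3E_a(u)$, and then $(1+\eps)\|u(t)\|_{\dot H_a^1}^2 - \tfrac34\|u(t)\|_{L_x^4}^4 = (\eps-\tfrac12)\|u(t)\|_{\dot H_a^1}^2 + 3E_a(u)$; multiplying by $M(u_0)>0$ and using $M(u_0)\|u(t)\|_{\dot H_a^1}^2 = z(t)\ge z_+(\delta)$ together with $M(u_0)E_a(u_0)\le(1-\delta)\E_a$ and conservation, one bounds this above by $3(1-\delta)\E_a - (\tfrac12-\eps)z_+(\delta)$, which is strictly negative for $\eps$ small since $z_+(\delta) > \K_a^2 = \tfrac{16}{9}C_a^{-2}$ and $3\E_a = \tfrac{8}{9}C_a^{-2} = \tfrac12\K_a^2$, so $3\E_a - \tfrac12 z_+(\delta) < 0$. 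Dividing back by $M(u_0)$ gives (b)(ii) with the stated $c = c(\delta,a,\|u_0\|_{L_x^2})$.

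The only real subtlety — the ``main obstacle'' — is the bookkeeping around the overarching strict inequality $M(u_0)E_a(u_0)<\E_a$ versus the non-strict hypothesis \eqref{quant-below}: one needs \eqref{quant-below} to be compatible with both $y(t_0)\le\K_a$ and $y(t_0)\ge\K_a$, and to know that the case $y(t_0)=\K_a$ cannot occur under the strict hypothesis of Theorem~\ref{T:main} (as remarked after that theorem), but here we are given the weaker \eqref{quant-below} with a fixed $\delta>0$, which already forces $z(t_0)\ne\K_a^2$ and hence puts us strictly inside one of the two intervals. Everything else is elementary calculus for the single-variable function $f$, plus the continuity of $t\mapsto\|u(t)\|_{\dot H_a^1}$ on the lifespan, which is part of Definition~\ref{D:solution}.
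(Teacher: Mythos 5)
Your proposal is correct and follows essentially the same route as the paper: the sharp Gagliardo--Nirenberg inequality plus mass/energy conservation yields the single-variable inequality (your $f(z)\le(1-\delta)\E_a$ is the paper's $3y^2-2y^3\le 1-\delta$ after normalizing by $\K_a$), a continuity argument traps the solution in one of the two components of the sublevel set, and b.(ii) is handled via the identity $(1+\eps)\|u\|_{\dot H_a^1}^2-\tfrac34\|u\|_{L_x^4}^4=3E_a(u)-(\tfrac12-\eps)\|u\|_{\dot H_a^1}^2$ together with $3\E_a=\tfrac12\K_a^2$. The only (immaterial) difference is that you prove a.(ii) directly from Gagliardo--Nirenberg and then deduce a.(iii), whereas the paper proves a.(iii) first and reads off a.(ii) from the same algebraic identity.
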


\begin{proof} Throughout the proof, it will be useful to recall that
\begin{equation}\label{eaka}
\E_a = \tfrac{8}{27}C_a^{-2}\qtq{and} \K_a = \tfrac43 C_a^{-1},
\end{equation}
where $C_a$ denotes the sharp constant in the Gagliardo--Nirenberg inequality \eqref{E:GN}.  By the sharp Gagliardo--Nirenberg inequality, conservation of mass and energy, and \eqref{quant-below}, we may write
\[
(1-\delta)\E_a \geq M(u) E_a(u) \geq \tfrac12 \|u(t)\|_{L_x^2}^2 \|u(t)\|_{\dot H_a^1}^2 - \tfrac14 C_a\|u(t)\|_{L_x^2}^3 \|u(t)\|_{\dot H_a^1}^3
\]
for any $t\in I$. Using \eqref{eaka}, this inequality becomes
\[
1-\delta\geq 3\biggl(\frac{\|u(t)\|_{L_x^2}\|u(t)\|_{\dot H_a^1}}{\K_a}\biggr)^2 - 2\biggl(\frac{\|u(t)\|_{L_x^2} \|u(t)\|_{\dot H_a^1}}{\K_a}\biggr)^3.
\]
Claims a.(i) and b.(i) now follow from a continuity argument, together with the observation that
\[
(1-\delta) \geq 3y^2 - 2y^3 \implies |y-1|\geq \delta' \qtq{for some} \delta'=\delta'(\delta)>0.
\]

For claim a.(iii), the upper bound follows immediately, since the nonlinearity is focusing.  For the lower bound, we again rely on the sharp Gagliardo--Nirenberg. Using a.(i)  and \eqref{eaka} as well, we find
\begin{align*}
E_a(u) &  \geq \tfrac12\|u(t)\|_{\dot H_a^1}^2 [ 1 - \tfrac12 C_a \|u(t)\|_{L_x^2} \|u(t) \|_{\dot H_a^1} ] \\
& \geq  \tfrac12\|u(t)\|_{\dot H_a^1}^2 [ 1 - \tfrac12 C_a\K_a(1-\delta')] \geq (\tfrac16+\tfrac{\delta'}{3})\|u(t)\|_{\dot H_a^1}^2
\end{align*}
for all $t\in I$. Thus a.(iii) holds.

We turn to a.(ii) and b.(ii). We begin by writing
\begin{align*}
\|u(t)\|_{\dot H_a^1}^2 - \tfrac34 \|u(t)\|_{L_x^4}^4& = 3E_a(u) - \tfrac12 \|u(t)\|_{\dot H_a^1}^2, \\
(1+\eps)\|u(t)\|_{\dot H_a^1}^2 - \tfrac34 \|u(t)\|_{L_x^4}^4&= 3E_a(u) - (\tfrac12-\eps)\|u(t)\|_{\dot H_a^1}^2,
\end{align*}
for $t\in I$, where $\eps>0$ will be chosen below. Thus a.(ii) follows from a.(iii) (choosing any $0<c\leq \delta'$).  For b.(ii), we instead rely on the conservation of mass and energy, \eqref{quant-below}, \eqref{eaka}, and
b.(i) to estimate
\begin{align*}
3E_a(u) - (\tfrac12-\eps)\|u(t)\|_{\dot H_a^1}^2 &\leq \tfrac{1}{M(u)}\bigl[3\E_a - (\tfrac12-\eps)(1+\delta')^2 \K_a^2\bigr]   \\
& \leq \tfrac{-8[(1+\delta')^2-1]+ 16\eps(1+\delta')^2}{9C_a^2 M(u)} <0,
\end{align*}
provided $\eps$ is sufficiently small depending on $\delta'$.  Thus b.(ii) follows.  \end{proof}

\begin{remark}\label{R:coercive} Suppose $u_0\in H_x^1\backslash\{0\}$ satisfies $M(u_0)E_a(u_0)<\E_a$ and $\|u_0\|_{L_x^2} \|u_0\|_{\dot H_a^1} \leq \K_a.$ Then by continuity, the maximal-lifespan solution $u$ to \eqref{nls} with initial data $u_0$ obeys $\|u(t)\|_{L_x^2} \|u(t)\|_{\dot H_a^1} < \K_a$ for all $t$ in the lifespan of $u$.  In particular, $u$ remains bounded in $H_x^1$ and hence (by Theorem~\ref{T:LWP}) is global.
\end{remark}

\section{Blowup}\label{S:blowup}

In this section, we prove Theorem~\ref{T:main}(ii); that is, we prove finite-time blowup above the threshold.  We employ the standard arguments, as in \cite{DHR, Glassey, HR, OT}. In particular, we use the virial identities of Section~\ref{S:virial}, along with Proposition~\ref{P:coercive}.

\begin{theorem}[Blowup] Let $a>-\frac14$ and $u_0\in H_x^1\backslash\{0\}$.  Suppose
\[
M(u_0)E_a(u_0) < \E_a\qtq{and} \|u_0\|_{L_x^2}\|u_0\|_{\dot H_a^1} > \K_a.
\]
Let $u:I\times\R^3\to\C$ denote the maximal-lifespan solution to \eqref{nls} with $u(0)=u_0$. If $xu_0\in L_x^2$ or $u_0$ is radial, then $u$ blows up in finite time in both time directions.
\end{theorem}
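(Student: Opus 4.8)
The plan is to argue by contradiction: suppose $u$ exists globally (say forward in time) and derive a contradiction from the convexity of a suitable virial quantity. The starting point is Proposition~\ref{P:coercive}(b): writing $\delta$ for the gap in $M(u_0)E_a(u_0)\leq(1-\delta)\E_a$, the hypothesis $\|u_0\|_{L_x^2}\|u_0\|_{\dot H_a^1}>\K_a$ places us in case~b, so there exist $\eps,c>0$ with
\[
(1+\eps)\|u(t)\|_{\dot H_a^1}^2 - \tfrac34\|u(t)\|_{L_x^4}^4 \leq -c<0 \qtq{for all}t\in I,
\]
and moreover $\|u(t)\|_{L_x^2}\|u(t)\|_{\dot H_a^1}\geq(1+\delta')\K_a$ for all $t$, so in particular $\|u(t)\|_{\dot H_a^1}$ stays bounded below. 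The key consequence is that the quantity $\|u(t)\|_{\dot H_a^1}^2-\tfrac34\|u(t)\|_{L_x^4}^4$ is bounded above by $-\tfrac{\eps}{1+\eps}\|u(t)\|_{\dot H_a^1}^2-\tfrac{c}{1+\eps}\leq -\eps' <0$ uniformly in $t$, using the lower bound on $\|u(t)\|_{\dot H_a^1}$.

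Next I would split into the two cases. \emph{Finite-variance case ($xu_0\in L_x^2$):} here one uses the exact virial identity, Lemma~\ref{L:virial0}, namely $\partial_{tt}V(t;|x|^2)=8[\|u(t)\|_{\dot H_a^1}^2-\tfrac34\|u(t)\|_{L_x^4}^4]\leq -8\eps'<0$. Since $V(t;|x|^2)\geq 0$ but its second derivative is bounded above by a negative constant, $V$ must reach zero in finite time, which is the standard Glassey contradiction; one should note that $xu(t)\in L_x^2$ is propagated, at least locally, so the computation is justified. \emph{Radial case:} here I replace $|x|^2$ by the truncated weight $w_R$ from \eqref{phi-virial} and invoke Lemma~\ref{L:virial}. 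The identity reads $\partial_{tt}V(t;w_R)=8[\|u(t)\|_{\dot H_a^1}^2-\tfrac34\|u(t)\|_{L_x^4}^4]+(\text{error terms supported in }|x|>R)$, and the error terms are $O(R^{-2}\|u\|_{L_x^2}^2 + \int_{|x|>R}|u|^4\,dx)$ together with the manifestly nonpositive terms from \eqref{virial-sign}. Using conservation of mass to control the $R^{-2}$ term and the radial Strauss inequality $\|u\|_{L_x^4(|x|>R)}^4\lesssim R^{-2}\|u\|_{L_x^2}^3\|\nabla u\|_{L_x^2}\lesssim R^{-2}\|u\|_{L_x^2}^3\|u\|_{\dot H_a^1}$ to control the quartic tail, the error is bounded by $CR^{-2}(1+\|u(t)\|_{\dot H_a^1})$. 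If one only has the crude bound from conservation laws, $\|u(t)\|_{\dot H_a^1}\lesssim 1$ need not hold a priori; the way around this is to absorb: for $R$ large, $CR^{-2}\|u(t)\|_{\dot H_a^1}\leq \tfrac{1}{2}\eps\|u(t)\|_{\dot H_a^1}$, so $\partial_{tt}V(t;w_R)\leq 8[\|u(t)\|_{\dot H_a^1}^2-\tfrac34\|u(t)\|_{L_x^4}^4]+\tfrac12\eps'' \leq -\eps'''<0$ uniformly, again using $(1+\eps)\|u\|_{\dot H_a^1}^2-\tfrac34\|u\|_{L_x^4}^4\leq -c$ and the lower bound on $\|u\|_{\dot H_a^1}$ to beat the constant term. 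Since $0\leq V(t;w_R)\lesssim R^2\|u\|_{L_x^2}^2$ is bounded and $\partial_{tt}V(t;w_R)\leq-\eps'''$, we again get $V(t;w_R)<0$ in finite time, a contradiction.

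The step I expect to be the main obstacle is the radial case bookkeeping: making sure the negative contribution $8[\|u(t)\|_{\dot H_a^1}^2-\tfrac34\|u(t)\|_{L_x^4}^4]$ genuinely dominates the $R$-dependent error terms uniformly in $t$ without any a priori bound on $\|u(t)\|_{\dot H_a^1}$ beyond what Proposition~\ref{P:coercive}(b) and conservation laws provide. The resolution is the one indicated above — the error term carries only one power of $\|u(t)\|_{\dot H_a^1}$ while the main term carries two, so for $R$ large the main term wins after also using the uniform lower bound $\|u(t)\|_{\dot H_a^1}\gtrsim\K_a/\|u(t)\|_{L_x^2}\gtrsim 1$ to dispose of the additive constant. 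A minor additional point is justifying the virial computations (differentiation under the integral, propagation of the weighted norm) by the usual approximation/truncation argument; I would simply cite \cite{OT, DHR, HR} for these routine matters.
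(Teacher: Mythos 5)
Your proposal is correct and follows essentially the same route as the paper: Proposition~\ref{P:coercive}(b)(ii) gives the uniform negative upper bound on $\|u(t)\|_{\dot H_a^1}^2-\tfrac34\|u(t)\|_{L_x^4}^4$, the finite-variance case is Lemma~\ref{L:virial0} plus Glassey, and the radial case is Lemma~\ref{L:virial} plus the radial Gagliardo--Nirenberg tail bound with the same absorption of the $R^{-2}\|u(t)\|_{\dot H_a^1}$ error into the quadratic main term (the paper does this via Young's inequality, producing $8\eps\|u\|_{\dot H_a^1}^2+C\eps^{-1}R^{-4}\|u_0\|_{L_x^2}^6$, which is what your ``one power versus two powers'' remark amounts to). No gaps.
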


\begin{proof} Choose $\delta>0$ so that $M(u_0)E_a(u_0)\leq (1-\delta)\E_a$.

First, suppose $xu_0\in L_x^2$.  Using Lemma~\ref{L:virial0} and Proposition~\ref{P:coercive}b.(iii),
\[
\partial_{tt} \int_{\R^3} |x|^2 |u(t,x)|^2  \,dx \leq -c < 0 \qtq{for all}t\in I
\]
for some $c=c(\delta, a, \|u_0\|_{L_x^2})$.  By the standard convexity arguments (cf. \cite{Glassey}), it follows that $u$ blows up in finite time in both time directions.

Next, suppose that $u_0$ is radial. We start by noting the following radial improvement of the Gagliardo--Nirenberg inequality, which is a consequence of H\"older's inequality, radial Sobolev embedding, and the equivalence of Sobolev spaces: for any radial $f\in H_a^1$ and any $R>1$,
\[
\|f\|_{L_x^4(\{|x|>R\})}^4 \lesssim R^{-2} \|f\|_{L_x^2}^3 \| f\|_{\dot H_a^1}.
\]
Now take $R>1$ to be determined below and define $w_R\geq 0$ as in Section~\ref{S:virial}.  Using Lemma~\ref{L:virial} and the conservation of mass, we can bound
\begin{align*}
&\partial_{tt}\int_{\R^3} w_R(x) |u(t,x)|^2\,dx \leq 8\bigl[\|u(t)\|_{\dot H_a^1}^2 - \tfrac34\|u(t)\|_{L_x^4}^4\bigr] + e(t), \quad\text{where}\\
&|e(t)| \lesssim R^{-2} \|u_0\|_{L_x^2}^2 + \|u(t)\|_{L_x^4(\{|x|\geq R\})}^4.
\end{align*}
Take $\eps=\eps(\delta)>0$ and $c=c(\delta,a,\|u_0\|_{L_x^2})>0$ as in Proposition~\ref{P:coercive}b.(iii).  By the radial Gagliardo--Nirenberg inequality, conservation of mass, and Young's inequality, we may bound
\[
\|u(t)\|_{L_x^4(\{|x|\geq R\})}^4 \leq 8\eps \|u(t)\|_{\dot H_a^1}^2 + C\eps^{-1} R^{-4} \|u_0\|_{L_x^2}^6\qtq{for some}C>0.
\]
Thus, using Proposition~\ref{P:coercive}b.(iii) and choosing $R=R(c, \eps, \|u_0\|_{L_x^2})$ sufficiently large, we can guarantee that
\[
\partial_{tt} \int_{\R^3} w_R(x) |u(t,x)|^2\,dx \leq - \tfrac c2 <0,
\]
which again implies that $u$ must blow up in finite time in both time directions.\end{proof}

\section{Concentration compactness}\label{S:cc}

In this section, we prove a linear profile decomposition adapted to the $H_x^1\to L_{t,x}^5$ Strichartz inequality for $e^{-it\L}$. This result will play a key role in proving the existence of minimal blowup solutions (Theorem~\ref{T:exist}).

\begin{proposition}[Linear profile decomposition]\label{P:LPD} Fix $a>-\frac14$ and let $\{f_n\}$ be a bounded sequence in $H_a^1(\R^3)$. Passing to a subsequence, there exist $J^*\in \{0,1,2,\dots,\infty\}$, non-zero profiles $\{\phi^j\}_{j=1}^{J^*}\subset H_x^1(\R^3)$, and parameters $\{(t_n^j,x_n^j)\}_{j=1}^{J^*}\subset\R\times\R^3$ satisfying the following:

For each finite $0\leq J\leq J^*$, we can write
\begin{equation}\label{E:LPD}
f_n=\sum_{j=1}^J \phi_n^j + r_n^J,\qtq{with} \phi_n^j = [e^{it_n^j \L^{n_j}}\phi^j](x-x_n^j) \qtq{and} r_n^J\in H_x^1.
\end{equation}
Here $\L^{n_j}$ is as in Definition~\ref{D:ops}, corresponding to the sequence $\{x_n^j\}_{n=1}^\infty$.

Moreover, for each finite $0\leq J\leq J^*$ we have the following decouplings:
\begin{gather}
\label{decouple1}
\lim_{n\to\infty} \bigl\{ \|(\L)^{\frac{s}{2}} f_n\|_{L_x^2}^2- \sum_{j=1}^J \|(\L)^{\frac{s}{2}}\phi_n^j\|_{L_x^2}^2 - \|(\L)^{\frac{s}{2}}r_n^J\|_{L_x^2}^2 \bigr\}=0,\ s\in\{0,1\},\\
\label{decouple2}
\lim_{n\to\infty} \bigl\{\| f_n\|_{L_x^4}^4 - \sum_{j=1}^J \|\phi_n^j\|_{L_x^4}^4 - \| r_n^J\|_{L_x^4}^4\bigr\}=0.
\end{gather}

The remainder $r_n^J$ satisfies
\begin{equation}\label{rnJweak}
\bigl(e^{-it_n^J\L} r_n^J\bigr)(x+x_n^J) \rightharpoonup 0 \qtq{weakly in} H_x^1
\end{equation}
and vanishes in the Strichartz norm:
\begin{equation}\label{rnJ}
\lim_{J\to J^*}\limsup_{n\to\infty}\|e^{-it\L}r_n^J\|_{L_{t,x}^5(\R\times\R^3)} = 0.
\end{equation}

The parameters $(t_n^j,x_n^j)$ are asymptotically orthogonal: for any $j\neq k$,
\begin{equation}\label{orthogonal}
\lim_{n\to\infty} \bigl(|t_n^j - t_n^k| + |x_n^j - x_n^k| \bigr)= \infty.
\end{equation}

Furthermore, for each $j$, we may assume that either $t_n^j\to\pm\infty$ or $t_n^j\equiv 0$, and either $|x_n^j|\to\infty$ or $x_n^j\equiv 0$.
\end{proposition}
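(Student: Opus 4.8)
The plan is to follow the now-standard scheme for linear profile decompositions adapted to a Strichartz inequality, modified to account for the failure of space-translation symmetry, as in \cite{KMVZZ2}. The engine of the argument is the inverse Strichartz inequality, Proposition~\ref{P:IS}: if a bounded sequence $\{g_n\}\subset H_a^1$ has $\liminf_n \|e^{-it\L}g_n\|_{L_{t,x}^5} \geq \eps$, then (after passing to a subsequence) there are parameters $(t_n,x_n)$ and a nonzero profile $\phi\in H_x^1$ so that $e^{it_n\L^n}[g_n(\cdot+x_n)] \rightharpoonup \phi$ weakly in $H_x^1$, with a quantitative lower bound on $\|\phi\|_{H_x^1}$ in terms of $\eps$ and $\sup_n\|g_n\|_{H_a^1}$, and with the decoupling $\|g_n\|_{\dot H_a^s}^2 - \|\phi_n\|_{\dot H_a^s}^2 - \|g_n-\phi_n\|_{\dot H_a^s}^2 \to 0$ for $s\in\{0,1\}$ (here $\phi_n(x) = [e^{it_n\L^n}\phi](x-x_n)$). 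The $L_x^4$ decoupling \eqref{decouple2} for a single bubble follows from \eqref{decouple1} via Rellich-type local compactness together with the refined Fatou lemma, Lemma~\ref{L:RF}.

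The construction is then an inductive (diagonal) extraction. Set $r_n^0 = f_n$. Given $r_n^{J-1}$, put $\eps_{J-1} = \lim_n \|e^{-it\L}r_n^{J-1}\|_{L_{t,x}^5}$ (pass to a subsequence so this limit exists). If $\eps_{J-1}=0$ we stop and set $J^* = J-1$; otherwise apply Proposition~\ref{P:IS} to $\{r_n^{J-1}\}$ to extract $\phi^J$, parameters $(t_n^J,x_n^J)$, and set $r_n^J = r_n^{J-1} - \phi_n^J$ with $\phi_n^J$ as in \eqref{E:LPD}. A routine telescoping of the single-bubble decouplings gives \eqref{decouple1} for every finite $J$, and in particular $\sum_{j\leq J}\|\phi_n^j\|_{H_a^1}^2 \lesssim \sup_n\|f_n\|_{H_a^1}^2$ uniformly; since the inverse Strichartz inequality forces $\|\phi^j\|_{H_x^1}$ to be bounded below in terms of $\eps_{j-1}$, this is only consistent if $\eps_{j}\to 0$, which is exactly \eqref{rnJ}. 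The weak convergence \eqref{rnJweak} is built into the construction: by the choice in Proposition~\ref{P:IS}, $e^{-it_n^J\L^{n_J}}[r_n^{J-1}(\cdot+x_n^J)]\rightharpoonup \phi^J$, hence $e^{-it_n^J\L^{n_J}}[r_n^J(\cdot+x_n^J)]\rightharpoonup 0$; translating back and using that $\L[\psi(\cdot-x_n^J)] = [\L^{n_J}\psi](\cdot-x_n^J)$ converts this to \eqref{rnJweak}. The $L_x^4$ decoupling \eqref{decouple2} then follows by combining the single-bubble $L_x^4$ decouplings with the asymptotic orthogonality \eqref{orthogonal} (see below), exactly as in the translation-invariant case, using Lemma~\ref{L:RF} to handle cross terms.

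Asymptotic orthogonality \eqref{orthogonal} is proved by contradiction in the usual way: if for some $j<k$ the quantities $|t_n^j-t_n^k|+|x_n^j-x_n^k|$ stayed bounded along a subsequence, one could pass to a further subsequence so that $t_n^k-t_n^j\to\tau_\infty\in\R$ and $x_n^k-x_n^j\to y_\infty\in\R^3$; then, using the convergence-of-operators lemma, Lemma~\ref{L:coo} (specifically \eqref{coo3} to compare $e^{-i(t_n^k-t_n^j)\L^{n}}$ across the two base points, and the identification $\L[\psi(\cdot-x_n)] = [\L^n\psi](\cdot-x_n)$ to reconcile the two operator families), one finds that the weak limit defining $\phi^k$ is computed from the same sequence that already weakly converged (to $0$ after subtracting $\phi^j$), forcing $\phi^k=0$, contradicting that the profiles are nonzero. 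The reductions in the final sentence --- arranging $t_n^j\to\pm\infty$ or $t_n^j\equiv 0$, and $|x_n^j|\to\infty$ or $x_n^j\equiv 0$ --- are obtained by passing to subsequences and, in the case $t_n^j\to t_\infty^j\in\R$ (resp. $x_n^j\to x_\infty^j\in\R^3$), absorbing the limiting shift into a redefined profile, using \eqref{coo3} (resp. the continuity of translation on $H_x^1$ and Lemma~\ref{L:coo}) to see that the Strichartz-norm and decoupling statements are unaffected; here one must also recall that each $\L^{n_j}$ is replaced by its limit $\L^\infty$ which is either $-\Delta+\frac{a}{|x+x_\infty^j|^2}$ or $-\Delta$ according to Definition~\ref{D:ops}.

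The main obstacle is the non-commutativity of $\L$ with translations, which permeates every step: the profiles evolve under the varying operators $\L^{n_j}$ rather than a single fixed propagator, so the usual manipulations --- extracting a profile, subtracting it, iterating, and proving orthogonality --- must each be re-examined through the lens of Lemma~\ref{L:coo}. Concretely, the delicate point is ensuring that the decoupling identities \eqref{decouple1} genuinely telescope despite the operators changing from bubble to bubble, and that the orthogonality argument correctly handles the interplay between temporal translations $t_n^j$ and the $x_n^j$-dependence of $\L^{n_j}$; both of these were the technical heart of \cite{KMVZZ2}, and the present proof imports those mechanisms essentially verbatim, the cubic (rather than energy-critical) scaling only simplifying the bookkeeping.
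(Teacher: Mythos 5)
Your proposal is correct and follows essentially the same route as the paper: inductive extraction via the inverse Strichartz inequality (Proposition~\ref{P:IS}), telescoping of the single-bubble decouplings to get \eqref{decouple1}--\eqref{decouple2} and the summability that forces \eqref{rnJ}, the built-in weak convergence \eqref{rnJweak}, and a contradiction argument for \eqref{orthogonal} resting on the two weak-convergence lemmas from \cite{KMVZZ2}. The one place where your sketch is looser than the paper is the orthogonality step: for $k>j+1$ the profile $\phi^k$ is extracted from $r_n^{k-1}=r_n^j-\sum_{\ell=j+1}^{k-1}\phi_n^\ell$, so one must also kill the intermediate bubbles in the $k$-frame, which requires ordering the pairs lexicographically and assuming \eqref{orthogonal} fails for the \emph{first} time at $(j,k)$ so that the pairs $(j,\ell)$, and hence $(\ell,k)$, are already orthogonal.
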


We prove Proposition~\ref{P:LPD} by induction, extracting one bubble of concentration at a time.  Thus the key is to isolate a single bubble, which is the content of the inverse Strichartz inequality, Proposition~\ref{P:IS}.  As a first step, we prove the following refined Strichartz estimate, which allows us to identify a scale that is responsible for concentration.

\begin{lemma}[Refined Strichartz]\label{L:RS} Let $a>-\frac14$ and $f\in \dot H_a^{\frac12}(\R^3)$. Then
\[
\|e^{-it\L} f\|_{L_{t,x}^5(\R\times\R^3)} \lesssim \|f\|_{\dot H_a^{\frac12}(\R^3)}^{\frac35}
    \sup_{N\in 2^\mathbb{Z}} \|e^{-it\L} P_N^a f\|_{L_{t,x}^5(\R\times\R^3)}^{\frac25}.
\]
\end{lemma}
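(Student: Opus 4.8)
The plan is to follow the by-now standard argument for refined Strichartz estimates (as in Bourgain, or as presented in the book of Killip--Visan), adapted here to the operator $\L$ by means of the square function estimate (Lemma~\ref{T:sq}) and the Bernstein estimates (Lemma~\ref{L:Bernie}) that are available in the admissible range of Lebesgue exponents. First I would square-function the output: by Lemma~\ref{T:sq} with $s=0$ (applied at the exponent $r=5$, which lies in $(q_0',q_0)$ since $5$ is safely away from $1$ and $\infty$ for all $a>-\tfrac14$), we have
\[
\|e^{-it\L}f\|_{L_{t,x}^5}^5 \lesssim \Bigl\| \Bigl(\sum_N |e^{-it\L}P_N^a f|^2\Bigr)^{1/2}\Bigr\|_{L_{t,x}^5}^5 ,
\]
and expanding the square one is led to estimate $\int\!\!\int \bigl(\sum_N |e^{-it\L}P_N^a f|^2\bigr)^{5/2}\,dx\,dt$. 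The idea is to peel off two factors of the square function, bound them by the supremum we want to isolate, and handle the remaining three factors by a bilinear/Cauchy--Schwarz argument exploiting frequency separation.

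The key step is a bilinear estimate: for $M\le N$ dyadic,
\[
\|(e^{-it\L}P_M^a f)(e^{-it\L}P_N^a f)\|_{L_{t,x}^{5/2}} \lesssim \bigl(\tfrac{M}{N}\bigr)^{\theta}\, \|P_M^a f\|_{\dot H_a^{1/2}}\,\|P_N^a f\|_{\dot H_a^{1/2}}
\]
for some $\theta>0$. This I would obtain by interpolating between (a) the trivial $L_t^\infty L_x^{5/4}$-type bound coming from Hölder, Bernstein (Lemma~\ref{L:Bernie}(ii)), and $L_x^2$-boundedness of $e^{-it\L}$, which costs a positive power of $M/N$, and (b) a diagonal Strichartz bound with no gain, using Lemma~\ref{L:Bernie}(iii) to convert frequency-localized $L^2$ norms into $\dot H_a^{1/2}$ norms. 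Summing the geometric series in $M/N$ over all pairs $M\le N$ then gives
\[
\Bigl\|\sum_N |e^{-it\L}P_N^a f|^2\Bigr\|_{L_{t,x}^{5/2}} \lesssim \sum_{M\le N}\bigl(\tfrac MN\bigr)^{\theta}\|P_M^a f\|_{\dot H_a^{1/2}}\|P_N^a f\|_{\dot H_a^{1/2}} \lesssim \|f\|_{\dot H_a^{1/2}}^2,
\]
where the last step is Schur's test together with the square-function characterization of $\dot H_a^{1/2}$ (Lemma~\ref{T:sq} with $s=\tfrac12$). Finally, writing $\bigl(\sum_N|e^{-it\L}P_N^a f|^2\bigr)^{5/2} = \bigl(\sum_N|e^{-it\L}P_N^a f|^2\bigr)^{1/2}\cdot\bigl(\sum_N|e^{-it\L}P_N^a f|^2\bigr)^{2}$, bounding the first factor pointwise by $\sum_N|e^{-it\L}P_N^a f| \le (\sup_N \|e^{-it\L}P_N^a f\|_{L_{t,x}^5}^{1/?})\cdots$ — more precisely, estimating $\|e^{-it\L}f\|_{L^5_{t,x}}^5$ by distributing it as a product of a sup-type factor raised to an appropriate power and the $L^{5/2}$ bound above — and collecting exponents yields the stated $\tfrac35$ and $\tfrac25$ powers.

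The main obstacle I anticipate is the bilinear estimate with a quantitative gain $(M/N)^\theta$: because $\L$ is not translation invariant and has no clean Fourier-side support properties, one cannot argue by disjointness of Fourier supports as in the classical case. Instead the gain must come entirely from Bernstein-type trade-offs between Lebesgue exponents (Lemma~\ref{L:Bernie}), and one has to check that the interpolation endpoints genuinely lie in the admissible window $(q_0',q_0)$ for all $a>-\tfrac14$; this is the place where the restriction on $a$ and the resulting ``peculiar exponents'' really bite. A secondary technical point is justifying the square-function reduction and the interchange of summation and integration, which is routine given Lemma~\ref{T:sq} but should be stated. Once the bilinear estimate is in hand, the bookkeeping of exponents to produce exactly $\tfrac35$ and $\tfrac25$ is elementary.
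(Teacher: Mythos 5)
Your overall architecture is the paper's: square-function the evolution via Lemma~\ref{T:sq}, expand $\bigl(\sum_N|e^{-it\L}P_N^af|^2\bigr)^{5/2}$ so as to isolate a frequency-separated bilinear piece plus factors to be bounded by the supremum, prove a bilinear estimate with gain $(N_1/N_2)^{\theta}$, and sum with Schur's test. You have also correctly identified that the gain must come from Bernstein trade-offs rather than Fourier-support disjointness. However, the two places where you commit to specifics would not go through as written.

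First, the bilinear estimate. Interpolating an $L_t^\infty L_x^{5/4}$-type bound against a diagonal Strichartz bound (which lives in $L_t^{5/2}L_x^{15/11}$) cannot produce $L_{t,x}^{5/2}$: matching the time exponent forces all the interpolation weight onto the Strichartz endpoint, which carries no gain. Worse, the $L_t^\infty L_x^{5/4}$ endpoint is not available from H\"older, Bernstein, and $L^2$-boundedness, since the high-frequency factor would have to be placed below $L_x^2$ and there is no downward Bernstein. No interpolation is needed: writing $f_N=P_N^af$, the paper uses a single unbalanced H\"older,
\[
\|(e^{-it\L}f_{N_1})(e^{-it\L}f_{N_2})\|_{L_{t,x}^{5/2}}\leq\|e^{-it\L}f_{N_1}\|_{L_t^5L_x^6}\,\|e^{-it\L}f_{N_2}\|_{L_t^5L_x^{30/7}},
\]
and then Bernstein relative to the admissible pair $(5,\tfrac{30}{11})$ together with Strichartz and Lemma~\ref{L:Bernie}(iii) bounds these two factors by $N_1^{1/10}\|f_{N_1}\|_{\dot H_a^{1/2}}$ and $N_2^{-1/10}\|f_{N_2}\|_{\dot H_a^{1/2}}$; the negative power at high frequency comes from the $\dot H_a^{1/2}$ normalization ($N_2^{-1/2}$) beating the Bernstein loss ($N_2^{2/5}$). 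Second, the final splitting $\bigl(\sum_N|g_N|^2\bigr)^{5/2}=\bigl(\sum_N|g_N|^2\bigr)^{1/2}\cdot\bigl(\sum_N|g_N|^2\bigr)^{2}$, combined with your $L_{t,x}^{5/2}$ bound on the square of the square function, yields only $\|f\|_{\dot H_a^{1/2}}^5$ and never produces the supremum. The supremum must be extracted \emph{inside} the expanded sum: writing $|g_{N_1}|^2|g_{N_2}|^2=(g_{N_1}g_{N_2})\cdot g_{N_1}\cdot g_{N_2}$ and applying H\"older with exponents $5,\tfrac52,5,5$ (the first for the square-function factor), the two single factors land in $L_{t,x}^5$ and are each bounded by $\sup_N\|e^{-it\L}P_N^af\|_{L_{t,x}^5}$, while only the genuinely bilinear factor is estimated and summed as above. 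This gives $\|f\|_{\dot H_a^{1/2}}^3\sup_N\|e^{-it\L}P_N^af\|_{L_{t,x}^5}^2$ for the fifth power, i.e.\ exactly the stated exponents.
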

\begin{proof} Denote $f_N =  P_N^a f$. We take all space-time norms over $\R\times\R^3$.  Using the square function estimate (Lemma~\ref{T:sq}), Bernstein, Strichartz, and Cauchy--Schwarz, we estimate
\begin{align*}
\|&e^{-it\L}f\|_{L_{t,x}^5}^5 \\
&\lesssim \iint \biggl[\sum_N |e^{-it\L}f_N|^2\biggr]^{\frac52}\,dx\,dt \\
&\lesssim \iint \biggl[\sum_N |e^{-it\L}f_N|^2\biggr]^{\frac12}\sum_{N_1\leq N_2} |e^{-it\L} f_{N_1}|^2 |e^{-it\L}f_{N_2}|^2 \,dx\,dt \\
&\lesssim \|e^{-it\L} f\|_{L_{t,x}^5} \sum_{N_1\leq N_2} \|e^{-it\L}f_{N_1}\|_{L_t^5 L_x^6}
    \|e^{-it\L}f_{N_2}\|_{L_t^5 L_x^{\frac{30}{7}}}  \prod_{j=1}^2 \|e^{-it\L}f_{N_j}\|_{L_{t,x}^5} \\
& \lesssim \|f\|_{\dot H_a^{\frac12}} \sup_{N}\|e^{-it\L}f_N\|_{L_{t,x}^5}^2 \sum_{N_1\leq N_2} (\tfrac{N_1}{N_2})^{\frac{1}{10}}
   \prod_{j=1}^2 \|  e^{-it\L} f_{N_j} \|_{L_t^5 \dot H_a^{\frac12,\frac{30}{11}}} \\
& \lesssim \|f\|_{\dot H_a^{\frac12}} \sup_{N}\|e^{-it\L}f_N\|_{L_{t,x}^5}^2 \sum_{N_1\leq N_2} (\tfrac{N_1}{N_2})^{\frac{1}{10}}
    \|f_{N_1}\|_{\dot H_a^{\frac12}} \|f_{N_2} \|_{\dot H_a^{\frac12}} \\
& \lesssim \|f\|_{\dot H_a^{\frac12}}^3 \sup_{N}\|e^{-it\L}f_N\|_{L_{t,x}^5}^2.
\end{align*}
This completes the proof of Lemma~\ref{L:RS}.  \end{proof}

We turn to the inverse Strichartz inequality.

\begin{proposition}[Inverse Strichartz]\label{P:IS}
Fix $a>-\tfrac14$ and let $\{f_n\}\subset H^1_a(\R^3)$ satisfy
\[
\lim_{n\to\infty} \|f_n\|_{H^1_a} = A<\infty \qtq{and} \lim_{n\to\infty} \| e^{-it\L} f_n\|_{L_{t,x}^5} = \eps > 0.
\]
Passing to a subsequence, there exist $\phi\in H_x^1$ and $\{(t_n,x_n)\}\subset\R\times\R^3$ such that
\begin{align}
\label{weak}
& g_n(\cdot) = [e^{-it_n\L} f_n](\cdot+x_n) \rightharpoonup \phi(\cdot) \qtq{weakly in} H_x^1,  \\
\label{LB} & \|\phi\|_{H_a^1} \gtrsim \eps(\tfrac{\eps}{A})^{14}.
\end{align}

Furthermore, defining
\[
\phi_n(x) = e^{it_n \L}[\phi(\cdot-x_n)](x)=[e^{it_n\L^n}\phi](x-x_n),
\]
with $\L^n$ as in Definition~\ref{D:ops}, we have
\begin{align}
& \lim_{n\to\infty} \bigl\{ \|(\L)^{\frac{s}{2}} f_n\|_{L_x^2}^2 - \|(\L)^{\frac{s}{2}} (f_n-\phi_n)\|_{L_x^2}^2 - \|(\L)^{\frac{s}{2}} \phi_n\|_{L_x^2}^2 \bigr\} =0,\ s\in\{0,1\}, \label{H1-decouple} \\
& \lim_{n\to\infty} \bigl\{ \|f_n\|_{L_x^4}^4 - \|f_n-\phi_n\|_{L_x^4}^4  - \|\phi_n\|_{L_x^4}^4\bigr\} = 0.    \label{L4-decouple}
\end{align}

Finally, we may assume that either $t_n\to\pm\infty$ or $t_n\equiv 0$, and either $|x_n|\to\infty$ or $x_n\equiv 0$.
\end{proposition}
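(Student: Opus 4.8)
The plan is to run the standard inverse-Strichartz argument, following \cite{KMVZZ2}, while carefully tracking the broken translation symmetry. \textbf{Step 1: frequency localization.} Applying the refined Strichartz inequality, Lemma~\ref{L:RS}, to $f_n$ viewed in $\dot H_a^{1/2}$ (using $\|f_n\|_{\dot H_a^{1/2}}\lesssim\|f_n\|_{H_a^1}$) produces, for all large $n$, a dyadic $N_n$ with
\[
\|e^{-it\L}P_{N_n}^a f_n\|_{L_{t,x}^5(\R\times\R^3)}\gtrsim\eps(\eps/A)^{3/2}.
\]
Since $f_n$ is bounded in $H_a^1$, not merely $\dot H_a^{1/2}$, Bernstein (Lemma~\ref{L:Bernie}) and Strichartz give $\|e^{-it\L}P_N^a f_n\|_{L_{t,x}^5}\lesssim\min\{N^{1/2},N^{-1/2}\}A$, so the lower bound traps $N_n$ in a bounded range of dyadics depending only on $\eps/A$; after passing to a subsequence we may take $N_n\equiv N_0$. \textbf{Step 2: locating the bump.} Because $P_{N_0}^a$ maps $L_x^2$ into $L_x^q$ for every $q<q_0$, Hölder on a small ball gives $\|e^{-it\L}P_{N_0}^a f_n\|_{L_{t,x}^5(\{|x|<\delta\}\times\R)}\lesssim\delta^{\alpha}A$ for some $\alpha>0$, so for $\delta=\delta(\eps,A)$ small the Strichartz mass is carried by $\{|x|>\delta\}$. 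On that region the heat kernel bounds of Lemma~\ref{L:kernel} give $\|e^{-it\L}P_{N_0}^a f_n\|_{L_{t,x}^\infty(\{|x|>\delta\}\times\R)}\lesssim_{\delta,N_0}A$, and interpolating $L_{t,x}^5$ between this and the admissible bound $\|e^{-it\L}P_{N_0}^a f_n\|_{L_{t,x}^{10/3}}\lesssim A$ yields $(t_n,x_n)$ with $|x_n|>\delta$ and $|[e^{-it_n\L}P_{N_0}^a f_n](x_n)|$ bounded below by $\eps$ times a positive power of $\eps/A$. (When $a\geq0$ one has $P_{N_0}^a\colon L_x^2\to L_x^\infty$ and the restriction $\{|x|>\delta\}$ is unnecessary; it is needed only because for general $a<0$ there is no $L_x^1\to L_x^\infty$ dispersive estimate, so the bump must be kept away from the singularity of the potential.)

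\textbf{Step 3: weak limit, lower bound, normalization.} Set $g_n(\cdot)=[e^{-it_n\L}f_n](\cdot+x_n)$, which is bounded in $H_x^1$; pass to a subsequence so that $g_n\rightharpoonup\phi$ in $H_x^1$, $x_n\to x_\infty$ with $|x_\infty|\geq\delta$ or $|x_n|\to\infty$, and $t_n\to t_\infty$ or $|t_n|\to\infty$. Using $\L[\psi(\cdot-x_n)]=[\L^n\psi](\cdot-x_n)$ and the heat semigroup being a function of $\L$, rewrite $[e^{-it_n\L}P_{N_0}^a f_n](x_n)=[(e^{-\L^n/N_0^2}-e^{-4\L^n/N_0^2})g_n](0)$; the kernel of this operator based at $0$ is a heat-regularized $\delta_0$, which by \eqref{coo2} converges in $H_x^{-1}$ — this is where $|x_\infty|\neq0$ enters — to the corresponding object for $\L^\infty$. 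Pairing against $g_n\rightharpoonup\phi$ gives $[(e^{-\L^\infty/N_0^2}-e^{-4\L^\infty/N_0^2})\phi](0)\neq0$, whence by Bernstein for $\L^\infty$ the bound $\|\phi\|_{H_a^1}\gtrsim\eps(\eps/A)^{14}$, which is \eqref{LB}; a similar pairing argument yields \eqref{weak}. If $x_n\to x_\infty$ one then replaces $x_n$ by $0$ and $\phi$ by $\phi(\cdot-x_\infty)$, and normalizes the time parameter likewise, checking via Lemma~\ref{L:coo} that the resulting profiles $\phi_n$ differ by $o(1)$ in $H_x^1$; this puts us in the alternatives $x_n\equiv0$ or $|x_n|\to\infty$, and $t_n\equiv0$ or $t_n\to\pm\infty$.

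\textbf{Step 4: decouplings.} For \eqref{H1-decouple}, expand the square: after conjugating by the unitary $e^{-it_n\L}$ and translating by $x_n$, the cross term becomes $\langle(\L^n)^s(g_n-\phi),\phi\rangle$, which vanishes in the limit for $s=0$ since $g_n\rightharpoonup\phi$ in $L_x^2$, and for $s=1$ upon writing $\L^n\phi=\L^\infty\phi+(\L^n-\L^\infty)\phi$ and invoking \eqref{coo1} together with $g_n-\phi\rightharpoonup0$ in $\dot H_x^1$. For \eqref{L4-decouple} we split on the time alternative: if $t_n\to\pm\infty$, then $\|\phi_n\|_{L_x^4}=\|e^{it_n\L^n}\phi\|_{L_x^4}\to0$ by Corollary~\ref{L4-to-zero}, and the decoupling follows from the triangle inequality (and $\sup_n\|f_n\|_{L_x^4}\lesssim A$); if $t_n\equiv0$, the claim reduces, after translating by $x_n$, to $\|g_n\|_{L_x^4}^4-\|g_n-\phi\|_{L_x^4}^4-\|\phi\|_{L_x^4}^4\to0$, which is the refined Fatou lemma, Lemma~\ref{L:RF}, once one notes that $g_n\to\phi$ almost everywhere along a subsequence (Rellich--Kondrachov plus a diagonal argument, using $g_n\rightharpoonup\phi$ in $H_x^1$).

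\textbf{Main obstacle.} The crux is the broken translation symmetry: the bump location $x_n$ cannot be moved to the origin while keeping $\L$ fixed, so every translated propagator, Littlewood--Paley projection, and heat kernel must be reconciled in the limit through the operator-convergence results of Section~\ref{S:coo}; a secondary technical wrinkle, for $a<0$, is the need to first confine the bump to $\{|x|>\delta\}$, away from the potential's singularity, before extracting a pointwise lower bound.
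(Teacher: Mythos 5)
Your proposal is correct and follows essentially the same route as the paper's proof: refined Strichartz to fix a dyadic scale, H\"older/Bernstein to push the $L_{t,x}^5$ mass off a small ball around the singularity, interpolation against $L_{t,x}^{10/3}$ to locate a pointwise bump at $(t_n,x_n)$ with $|x_n|$ bounded below, pairing with a heat-regularized delta function via the operator-convergence results of Section~\ref{S:coo} to get the weak limit and the lower bound with exponent $14$, and weak convergence, \eqref{coo1}, Lemma~\ref{L:RF}, and Corollary~\ref{L4-to-zero} for the decouplings. The only cosmetic differences are that you normalize the parameters $(t_n,x_n)$ at the end rather than at the moment of extraction, and your upper bound on the $L^\infty_{t,x}$ norm in Step 2 is superfluous (what the interpolation actually delivers is a \emph{lower} bound on that norm).
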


\begin{proof} By Lemma~\ref{L:RS}, for $n$ sufficiently large, there exists $N_n\in 2^{\mathbb{Z}}$ such that
\[
\| e^{-it\L}  P_{N_n}^a f_n \|_{L_{t,x}^5} \gtrsim \eps(\tfrac{\eps}{A})^{\frac32}.
\]
Note that by Bernstein and Strichartz, we may bound
\[
\| e^{-it\L}  P_N^a f_n \|_{L_{t,x}^5} \lesssim (N^{\frac12}\vee N^{-\frac12}) A\qtq{for any} N\in 2^{\mathbb{Z}},
\]
so that we must have
\[
(\tfrac{\eps}{A})^5 \lesssim N_n \lesssim (\tfrac{A}{\eps})^5.
\]
Passing to a subsequence, we may assume $N_n\equiv N_*$. Thus
\[
\| e^{-it\L} P_{N_*}^a f_n \|_{L_{t,x}^5} \gtrsim \eps (\tfrac{\eps}{A})^{\frac32}
\]
for all $n$ sufficiently large.  In what follows, we use the shorthand:
\[
 P_* :=  P_{N_*}^a.
\]

Note that by H\"older and Bernstein, for any $N>0$ and $c>0$ we have
\[
\| P_N^a F\|_{L_x^5(\{|x|\leq c N^{-1}\})} \lesssim \| P_N^a F\|_{L_x^6} \|1\|_{L_x^{30}(\{|x|\leq c N^{-1}\})}
    \lesssim c^{\frac{1}{10}} \|P_N^a F\|_{L_x^5}.
\]
Thus,
\[
\|e^{-it\L} P_* f_n \|_{L_{t,x}^5(\R\times\{|x|\geq \alpha \})}\gtrsim \eps(\tfrac{\eps}{A})^{\frac32},\qtq{provided} \alpha = c N_*^{-1}
\]
for $c>0$ sufficiently small. Using this together with H\"older, Strichartz, and Bernstein, we find
\begin{align*}
\eps(\tfrac{\eps}{A})^{\frac32} & \lesssim \| e^{-it\L} P_* f_n \|_{L_{t,x}^\infty(\R\times \{|x|\geq\alpha\})}^{\frac13} \|e^{-it\L} P_* f_n \|_{L_{t,x}^{\frac{10}{3}}}^{\frac23}  \\
& \lesssim \| e^{-it\L} P_* f_n \|_{L_{t,x}^\infty(\R\times \{|x|\geq\alpha \})}^{\frac13}
    \| f_n\|_{L_x^2}^{\frac23},
\end{align*}
and hence there exist $(\tau_n,x_n)\in\R\times\R^3$ with $|x_n|\geq \alpha$ such that
\begin{equation}\label{bubble}
 \biggl| [e^{-i\tau_n\L}  P_* f_n](x_n)\biggr| \gtrsim \eps (\tfrac{\eps}{A})^{\frac{13}2}.
\end{equation}
Passing to a subsequence, we may assume $\tau_n\to\tau_\infty\in [-\infty,\infty]$. If $\tau_\infty\in\R$, we set $t_n\equiv 0$. If $\tau_\infty\in\{\pm\infty\}$, we set $t_n=\tau_n$. We may also assume that $x_n\to x_\infty\in\R^3\backslash\{0\}$ or $|x_n|\to\infty$.

We now let
\[
g_n(x) = [e^{-it_n\L} f_n](x+x_n),\qtq{i.e.} f_n(x) = [e^{it_n\L^n}g_n](x-x_n),
\]
where $\L^n$ is as in Definition~\ref{D:ops}.  Note that
\[
\|g_n\|_{H_x^1} = \|e^{-it_n\L}f_n\|_{H_x^1}\sim \|e^{-it_n\L}f_n\|_{H_a^1} \lesssim A,
\]
so that $g_n$ converges weakly to some $\phi$ in $H_x^1$ (up to a subsequence). Define
\[
\phi_n(x) = e^{it_n\L}[\phi(\cdot-x_n)](x) = [e^{it_n\L^n}\phi](x-x_n).
\]
By a change of variables and weak convergence, we have
\begin{equation*}
\|f_n\|_{L_x^2}^2 - \|f_n - \phi_n \|_{L_x^2}^2 - \|\phi_n\|_{L_x^2}^2 = 2\Re\langle g_n, \phi\rangle - 2\langle\phi,\phi\rangle \to 0
\end{equation*}
as $n\to\infty$; using \eqref{coo1} as well, we get
\begin{align*}
\|f_n\|_{\dot H^1_a}^2 - \|f_n-\phi_n\|_{\dot H^1_a}^2 - \|\phi_n\|_{\dot H^1_a}^2 & = 2\Re\langle g_n,\L^n \phi\rangle - 2\langle\phi, \L^n \phi\rangle \\
& \to 2[\langle\phi,\L^\infty\phi\rangle - \langle\phi,\L^\infty\phi\rangle] = 0.
\end{align*}
This proves \eqref{H1-decouple}.

We next turn to \eqref{LB}. We define
\[
h_n = \begin{cases}
     P_*^n \delta_0 & \text{if }\tau_\infty \in \{\pm \infty\}, \\
    e^{-i\tau_n \L^n} P_*^n\delta_0 & \text{if }\tau_\infty\in\R,
    \end{cases}
\qtq{where}  P_*^n = e^{-\L^n/N_*^2} - e^{-4\L^n/N_*^2}.
\]
Note that after a change of variables, \eqref{bubble} reads
\[
\bigl|\langle h_n,g_n\rangle\bigr| \gtrsim \eps (\tfrac{\eps}{A})^{\frac{13}2}.
\]
As $|x_n|\geq \alpha>0$, we have by \eqref{coo2} and \eqref{coo3} that
\[
h_n \to \begin{cases}  P_*^\infty\delta_0 &\text{if } \tau_\infty\in\{\pm\infty\}, \\
    e^{-i\tau_\infty \L^\infty} P_*^\infty \delta_0 &\text{if } \tau_\infty\in\R,  \end{cases}
\qtq{where}  P_*^\infty = e^{-\L^\infty/N_*^2} - e^{-4\L^\infty/N_*^2}.
\]
Here the convergence holds strongly in $\dot H_x^{-1}$.  Thus, if $\tau_\infty\in\{\pm\infty\}$, we have
\[
\eps (\tfrac{\eps}{A})^{\frac{13}2} \lesssim |\langle  P_*^\infty \delta_0, \phi\rangle|\lesssim \|\phi\|_{L_x^2} \|  P_*^\infty \delta_0\|_{L_x^2}.
\]

By the heat kernel bounds of Lemma~\ref{L:kernel}, we can bound
\[
\|  P_*^\infty \delta_0\|_{L_x^2}  \lesssim N_*^{\frac32}\lesssim  (\tfrac{A}{\eps})^{\frac{15}{2}},
\]
which implies
\[
\| \phi \|_{L_x^2} \gtrsim \eps(\tfrac{\eps}{A})^{14}.
\]
The case of $\tau_\infty\in\R$ is similar.  This proves \eqref{LB}.

We now turn to \eqref{L4-decouple}. Using Rellich--Kondrashov and passing to a subsequence, we may assume $g_n\to\phi$ almost everywhere.  Thus, Lemma~\ref{L:RF} implies
\[
\|g_n\|_{L_x^4}^4 - \|g_n-\phi\|_{L_x^4}^4 - \|\phi\|_{L_x^4}^4 \to 0.
\]
This, together with a change of variables, gives \eqref{L4-decouple} in the case $t_n\equiv 0$.  If instead $t_n\to\pm\infty$, then \eqref{L4-decouple} follows from Corollary~\ref{L4-to-zero}.

Finally, if $x_n\to x_\infty\in\R^3$, then we may take $x_n\equiv 0$ by replacing $\phi(\cdot)$ with $\phi(\cdot-x_\infty)$.  To see that all of the conclusions still hold, one relies on the continuity of translation in the strong $H_x^1$-topology.
\end{proof}

With Proposition~\ref{P:IS} in hand, the proof of Proposition~\ref{P:LPD} is relatively straightforward.  We give only a quick sketch; for details, one can refer to \cite{Oberwolfach}.

\begin{proof} The statements \eqref{E:LPD}, \eqref{decouple1}, and \eqref{decouple2} follow by induction.  Specifically, one sets $r_n^0 = f_n$ and then applies Proposition~\ref{P:IS} to the sequence $r_n^0$ to find $\phi_n^1$. Proceeding inductively, one applies Proposition~\ref{P:IS} to the sequences $r_n^J = r_n^{J-1} - \phi_n^J$ to deduce \eqref{E:LPD}. The decouplings \eqref{H1-decouple} and \eqref{L4-decouple} imply \eqref{decouple1} and \eqref{decouple2}.  We terminate the process at a finite $J^*$ if $\lim_{n\to\infty} \|e^{-it\L} r_n^{J^*}\|_{L_{t,x}^5} = 0$.

We define
\[
\eps_J = \lim_{n\to\infty}\|e^{-it\L}r_n^J\|_{L_{t,x}^5}\qtq{and} A_J = \lim_{n\to\infty}\|r_n^J\|_{H_a^1}.
\]
That $\eps_J\to 0$ follows from \eqref{decouple1}, \eqref{LB}, and the fact that $A_J\leq A_0$; indeed,
\[
\sum_{j=1}^J \eps_{j-1}^2(\tfrac{\eps_{j-1}}{A_0})^{\frac28}
    \lesssim \sum_{j=1}^J \|\phi_n^j\|_{H_a^1}^2 \lesssim A_0^2,
\]
which gives \eqref{rnJ}.

We next turn to \eqref{rnJweak}.  Note that by \eqref{weak}, we have
\begin{equation}\label{weakJ}
[e^{-it_n^J \L}r_n^{J-1}](\cdot+x_n^J) \rightharpoonup \phi^J\qtq{for each finite}J\geq 1.
\end{equation}
Recalling that $r_n^J = r_n^{J-1} - \phi_n^J$, we deduce \eqref{rnJweak}.

Finally, for \eqref{orthogonal}, we give $(j,k)$ the lexicographical order and suppose towards a contradiction that \eqref{orthogonal} fails for the first time at some $(j,k)$. We may assume $j<k$, and hence
\begin{align}\label{parameters converge}
&t_n^j - t_n^k\to t_0\in\R,\quad x_n^j - x_n^k\to x_0\in\R^3,\quad\text{and}
\\ \label{pf-orthogonal2}
&|x_n^j - x_n^\ell |\to\infty\qtq{or} |t_n^j - t_n^\ell|\to\infty\qtq{for each} j<\ell<k.
\end{align}
Using the construction and \eqref{weakJ}, we deduce
\begin{equation}\label{pf-orthogonal}
[e^{-it_n^k\L}r_n^j](\cdot + x_n^k) - \sum_{\ell=j+1}^{k-1} [e^{-it_n^k\L}\phi_n^\ell](\cdot+x_n^k) \rightharpoonup \phi^k(\cdot).
\end{equation}
We claim that both terms above converge weakly to zero, contradicting that $\phi^k\neq 0$.

Indeed, for the first term in \eqref{pf-orthogonal}, we rely on \eqref{rnJweak} and \eqref{parameters converge}, together with the following lemma from \cite{KMVZZ2}:
\begin{lemma}[Weak convergence, \cite{KMVZZ2}] Let $f_n\in\dot H_x^1$ satisfy $f_n\rightharpoonup 0$ weakly in $\dot H_x^1$, and suppose $\tau_n\to\tau_0\in\R$. Then for any sequence $\{y_n\}\subset\R^3$, we have
\[
e^{-i\tau_n\L^n} f_n\rightharpoonup 0 \qtq{weakly in} \dot H_x^1.
\]
Here $\L^n$ is as in Definition~\ref{D:ops}, corresponding to the sequence $\{y_n\}$.
\end{lemma}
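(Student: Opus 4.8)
The plan is to deduce the conclusion from the operator-convergence estimate \eqref{coo3} of Lemma~\ref{L:coo} by a duality argument, after first reducing to the dichotomy ($y_n$ convergent or $|y_n|\to\infty$) that is required by that lemma.

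First I would record the uniform bound $\sup_n\|e^{-i\tau_n\L^n}f_n\|_{\dot H_x^1}<\infty$. Indeed, a weakly convergent sequence is bounded, and $e^{-i\tau_n\L^n}$ is unitary on the homogeneous Sobolev space attached to $(\L^n)^{\frac12}$, which by the sharp Hardy inequality \eqref{iso} is isomorphic to $\dot H_x^1$ with constants independent of the translation defining $\L^n$. Since $\dot H_x^1$ is reflexive, to prove $e^{-i\tau_n\L^n}f_n\rightharpoonup 0$ it suffices to show that every weak subsequential limit vanishes. Fixing such a subsequence and passing to a further one, we may assume $y_n\to y_\infty\in\R^3$ or $|y_n|\to\infty$; this puts us in the setting of Lemma~\ref{L:coo} and pins down the operator $\L^\infty$ of Definition~\ref{D:ops}.

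Next I would carry out the duality step. For $\psi\in\dot H_x^{-1}$, self-adjointness of $\L^n$ gives $\langle e^{-i\tau_n\L^n}f_n,\psi\rangle=\langle f_n,e^{i\tau_n\L^n}\psi\rangle$, where $\langle\cdot,\cdot\rangle$ denotes the $L_x^2$-duality pairing between $\dot H_x^1$ and $\dot H_x^{-1}$. Applying \eqref{coo3} with $\tau_n$ replaced by $-\tau_n$ (and using $\tau_n\to\tau_0\in\R$) yields $e^{i\tau_n\L^n}\psi\to e^{i\tau_0\L^\infty}\psi$ strongly in $\dot H_x^{-1}$. Writing $\langle f_n,e^{i\tau_n\L^n}\psi\rangle=\langle f_n,e^{i\tau_0\L^\infty}\psi\rangle+\langle f_n,(e^{i\tau_n\L^n}-e^{i\tau_0\L^\infty})\psi\rangle$, the first term tends to $0$ since $f_n\rightharpoonup 0$ in $\dot H_x^1$ and $e^{i\tau_0\L^\infty}\psi\in\dot H_x^{-1}$, while the second is bounded by $\|f_n\|_{\dot H_x^1}\,\|(e^{i\tau_n\L^n}-e^{i\tau_0\L^\infty})\psi\|_{\dot H_x^{-1}}\to 0$. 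Hence $\langle e^{-i\tau_n\L^n}f_n,\psi\rangle\to 0$ for every $\psi$, so the weak subsequential limit is $0$, and the lemma follows.

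The argument is essentially routine; the single point requiring care is that $\{y_n\}$ is allowed to be an arbitrary sequence, whereas \eqref{coo3} is available only when $y_n$ converges or escapes to infinity. This is exactly what forces the reduction to subsequences at the outset, using that weak convergence to $0$ of a bounded sequence may be checked along subsequences. One should also bear in mind that the equivalences $\dot H_a^{\pm1}\sim\dot H_x^{\pm1}$ — and hence the boundedness of $e^{\pm it\L^n}$ on these spaces — hold with constants uniform in $n$, so that all of the estimates above are uniform in $n$.
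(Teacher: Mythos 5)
Your argument is correct. Note that the paper itself gives no proof of this lemma --- it is imported verbatim from \cite{KMVZZ2} --- so there is nothing internal to compare against; the duality argument you give (unitarity of $e^{-i\tau_n\L^n}$ on $L^2_x$ plus the strong $\dot H_x^{-1}$ convergence \eqref{coo3} applied to the adjoint propagators, after the subsequence reduction needed because \eqref{coo3} requires $y_n\to y_\infty$ or $|y_n|\to\infty$) is precisely the standard route and matches the proof in the cited reference. The two points you flag --- uniform-in-$n$ equivalence of $\dot H^{\pm1}_{a}$ with $\dot H^{\pm1}_x$, and checking weak convergence along subsequences of the bounded sequence --- are indeed the only places requiring care, and you handle both correctly.
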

For the second term in \eqref{pf-orthogonal}, we rely on \eqref{pf-orthogonal2}, along with another lemma imported from \cite{KMVZZ2}:
\begin{lemma}[Weak convergence, \cite{KMVZZ2}] Let $f\in \dot H_x^1$. Let $\{(\tau_n,x_n)\}\subset\R\times\R^3$ and suppose that either $|\tau_n|\to\infty$ or $|x_n|\to\infty$. Then for any sequence $\{y_n\}\subset\R^3$, we have
\[
[e^{-i\tau_n\L^n}f](\cdot+x_n)\rightharpoonup 0 \qtq{weakly in} \dot H_x^1,
\]
where $\L^n$ is as in Definition~\ref{D:ops}, corresponding to the sequence $\{y_n\}$.
\end{lemma}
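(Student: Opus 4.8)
The plan is to reduce the statement to two facts already available: the bounded-time Weak convergence lemma stated just above, and the dispersive decay $\|e^{it_n\L^n}\psi\|_{L_x^6}\to0$ for $\psi\in\dot H_x^1$ and $t_n\to\pm\infty$, valid for any sequence of centers (cf.\ \cite[Corollary~3.4]{KMVZZ2}). First I would record that $u_n:=[e^{-i\tau_n\L^n}f](\cdot+x_n)$ is bounded in $\dot H_x^1$, uniformly in $\{y_n\}$: each $\L^n$ is a nonnegative self-adjoint operator whose associated quadratic form is just the form of $\L$ transported by the translation $x\mapsto x-y_n$, so $\|(\L^n)^{1/2}g\|_{L_x^2}\sim\|\nabla g\|_{L_x^2}$ with an equivalence constant depending only on $a$; hence $e^{-i\tau_n\L^n}$, being unitary on $L_x^2$ and commuting with $(\L^n)^{1/2}$, is bounded on $\dot H_x^1$ with $a$-dependent constant, and since spatial translation is an isometry of $\dot H_x^1$ we get $\|u_n\|_{\dot H_x^1}\lesssim_a\|f\|_{\dot H_x^1}$. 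As $\dot H_x^1$ is reflexive, it then suffices to show that every weak subsequential limit $\chi$ of $\{u_n\}$ vanishes; passing to a subsequence I may also assume $\tau_n\to\tau_\infty\in[-\infty,\infty]$.

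I would then split on $\tau_\infty$. If $\tau_\infty=\pm\infty$, then $\|u_n\|_{L_x^6}=\|e^{-i\tau_n\L^n}f\|_{L_x^6}\to0$ by \cite[Corollary~3.4]{KMVZZ2} (applied with $t_n=-\tau_n$) and translation invariance of the $L_x^6$ norm; since $\dot H_x^1\hookrightarrow L_x^6$ and $C_c^\infty(\R^3)\subset L_x^{6/5}$, pairing $u_n\rightharpoonup\chi$ against test functions and using the strong $L_x^6$ convergence forces $\int\chi\bar\psi\,dx=0$ for all $\psi\in C_c^\infty$, i.e.\ $\chi=0$. If instead $\tau_\infty\in\R$, then $\{\tau_n\}$ is bounded, so the hypothesis of the lemma forces $|x_n|\to\infty$; consequently $f(\cdot+x_n)\rightharpoonup0$ in $\dot H_x^1$, since for $\psi\in C_c^\infty$ one has $\langle f(\cdot+x_n),\psi\rangle_{\dot H_x^1}=\int\nabla f(y)\cdot\overline{\nabla\psi}(y-x_n)\,dy\to0$ as $\nabla f\in L_x^2$ while $\supp\overline{\nabla\psi}(\cdot-x_n)$ escapes to infinity. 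The one substantive observation is the conjugation identity $[e^{-i\tau_n\L^n}f](\cdot+x_n)=e^{-i\tau_n\M^n}[f(\cdot+x_n)]$, where $\M^n$ is the operator of Definition~\ref{D:ops} associated to the shifted sequence $\{x_n+y_n\}$; this follows by translating the Schr\"odinger equation satisfied by $t\mapsto e^{-it\L^n}f$ through $x_n$, which re-centers the potential. Applying the bounded-time Weak convergence lemma above to $f(\cdot+x_n)\rightharpoonup0$ with times $\tau_n\to\tau_\infty\in\R$ and center sequence $\{x_n+y_n\}$ then gives $u_n=e^{-i\tau_n\M^n}[f(\cdot+x_n)]\rightharpoonup0$, so again $\chi=0$. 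Combining the two cases completes the proof.

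I do not expect a genuine obstacle here. The only care needed is the bookkeeping with subsequences — in particular the elementary point that the dichotomy ``$|\tau_n|\to\infty$ or $|x_n|\to\infty$'' for the full sequence forces $|x_n|\to\infty$ along any subsequence on which $\tau_n$ remains bounded — together with the standard fact that a weak $\dot H_x^1$ limit is determined by testing against $C_c^\infty$. The mildly new ingredient is simply recognizing that a spatial translation carries $\L^n$ to the operator with re-centered potential, which is exactly what licenses invoking the bounded-time lemma with the arbitrary shifted center sequence.
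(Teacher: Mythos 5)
Your argument is correct. Note that the paper does not prove this lemma at all --- it is imported verbatim from \cite{KMVZZ2} --- so there is no in-paper proof to compare against; what you have done is derive it from the two other black-boxed ingredients (the bounded-time weak-convergence lemma stated just above it, and the $L_x^6$-decay of \cite[Corollary~3.4]{KMVZZ2}), which is a legitimate and clean reduction. The two load-bearing points are both handled properly: the conjugation identity $[e^{-i\tau\L^n}f](\cdot+x_n)=e^{-i\tau\M^n}[f(\cdot+x_n)]$ with $\M^n$ centered at $x_n+y_n$, which is exactly the translation identity recorded in Definition~\ref{D:ops}, and the uniform $\dot H_x^1$-boundedness of the propagators coming from the translated version of \eqref{iso}, which justifies both the subsequence reduction and the density argument identifying the weak limit. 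The only cosmetic caveat is that Corollary~3.4 of \cite{KMVZZ2} is stated for center sequences that converge or escape to infinity, but since its conclusion is that a numerical sequence tends to zero, the case of an arbitrary $\{y_n\}$ follows by the usual subsequence-of-a-subsequence argument, which your framework already accommodates.
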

This completes the proof of Proposition~\ref{P:LPD}. \end{proof}

\section{Embedding nonlinear profiles}\label{S:embed}

In this section, we show that for profiles appearing in Proposition~\ref{P:LPD} living far from the origin, we can construct scattering solutions to \eqref{nls} with these profiles as data, provided the profiles are below the threshold for the standard NLS given in \cite{DHR, HR}.  This result will play an important role in the construction of minimal blowup solutions (Theorem~\ref{T:exist}).

Recall the definition of the thresholds $\E_a$ and $\K_a$ in \eqref{E:threshold}.  In particular, $\E_0$ and $\K_0$ refer to the threshold for the standard cubic NLS appearing in \cite{DHR,HR}; the results of \cite{DHR,HR} are encapsulated in Theorem~\ref{T:DHR}.

\begin{theorem}[Embedding nonlinear profiles]\label{T:embedding} Fix $a>-\frac14$ and let $\{t_n\}\subset\R$ satisfy $t_n\equiv 0$ or $t_n\to\pm\infty$, and let $\{x_n\}\subset\R^3$ satisfy $|x_n|\to\infty$. Let $\phi\in H_x^1(\R^3)$ satisfy
\begin{equation} \label{embedding-threshold}
\begin{aligned}
M(\phi)E_{0}(\phi) < \E_0 \qtq{and}\|\phi\|_{L_x^2}\|\phi\|_{\dot H_x^1} < \K_0 
&\qtq{if} t_n\equiv 0, \\
\tfrac12\|\phi\|_{L_x^2}^2 \|\phi\|_{\dot H_x^1}^2 < \E_0 &\qtq{if} t_n\to\pm\infty.
\end{aligned}
\end{equation}
Define
\[
\phi_n(x) = [e^{-it_n\L^n}\phi](x-x_n),
\]
where $\L^n$ is as in Definition~\ref{D:ops}. Then for all $n$ sufficiently large, there exists a global solution $v_n$ to \eqref{nls} with $v_n(0)=\phi_n$ satisfying
\[
\| v_n\|_{S_a^1(\R)} \lesssim 1,
\]
with the implicit constant depending on $\|\phi\|_{H_x^1}$.

Furthermore, for any $\eps>0$, there exists $N_\eps\in\mathbb{N}$ and $\psi_\eps\in C_c^\infty(\R\times\R^3)$ such that for $n\geq N_\eps$,
\begin{equation}\label{embed-cc}
\|v_n-\psi_\eps(\cdot+t_n,\cdot - x_n)\|_{X(\R\times\R^3)}  < \eta,
\end{equation}
where
\begin{align*}
& X \in \{L_{t,x}^5, L_{t,x}^{\frac{10}{3}}, L_t^{\frac{30}{7}} L_x^{\frac{90}{31}}, L_t^{\frac{30}{7}} \dot H_a^{\frac{31}{60},\frac{90}{31}}\},
\end{align*}
\end{theorem}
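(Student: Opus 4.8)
The plan is to reduce the theorem to a statement about solutions of the standard cubic NLS \eqref{nls0} living far from the origin and then transfer the conclusion to \eqref{nls} via the stability result, Theorem~\ref{T:stab}. First I would use the hypothesis \eqref{embedding-threshold} together with Corollary~\ref{C:thresholds} (which gives $\E_0\leq\E_a$ and $\K_0\leq\K_a$, hence $\E_a$, $\K_a$ are the \emph{larger} thresholds, so the profile lies below the \eqref{nls0}-thresholds) and Theorem~\ref{T:DHR} to produce a global scattering solution $w$ to \eqref{nls0}. Concretely: if $t_n\equiv 0$, Theorem~\ref{T:DHR}(i) gives a global scattering solution $w$ with $w(0)=\phi$; if $t_n\to+\infty$ (say), the final clause of Theorem~\ref{T:DHR} gives a global solution $w$ that scatters to $e^{it\Delta}\phi$ as $t\to+\infty$. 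In either case $\|w\|_{L_{t,x}^5(\R\times\R^3)}<\infty$, and by persistence of regularity for \eqref{nls0} (the analogue of Remark~\ref{remark1}) $w$ lies in the full Strichartz space $\dot S^1$ with $\dot H^1$-scattering. Approximating $w$ by a compactly supported function gives, for any $\eta>0$, some $\psi_\eta\in C_c^\infty(\R\times\R^3)$ with $\|w-\psi_\eta\|_{X}<\eta$ for the list of spaces $X$ in the statement; after applying the space-time translation this is the object $\psi_\eps(\cdot+t_n,\cdot-x_n)$ appearing in \eqref{embed-cc}.

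Next I would build the approximate solution to \eqref{nls}. Set $\tilde v_n(t,x)=[w(t+t_n,\cdot-x_n)](x)$, i.e. translate the \eqref{nls0}-solution $w$ in space by $x_n$ and in time by $t_n$. Then $\tilde v_n$ solves $(i\partial_t+\Delta)\tilde v_n=-|\tilde v_n|^2\tilde v_n$, so plugging into \eqref{nls} it solves $(i\partial_t-\L)\tilde v_n=-|\tilde v_n|^2\tilde v_n+e_n$ with error $e_n=-\tfrac{a}{|x|^2}\tilde v_n=-\tfrac{a}{|x+x_n|^2}w(t+t_n,x)$. The key point is that $|x_n|\to\infty$, so the potential $\tfrac{a}{|x+x_n|^2}$ is uniformly small on any fixed compact set and, more importantly, $|\nabla|^{1/2}$ (and $|\nabla|^{3/5}$) of $e_n$ tends to zero in the relevant dual Strichartz norm $N(\R)$; one proves this by the argument already used in Corollary~\ref{S-to-zero} — split $w$ into a $C_c^\infty$ piece plus an error small in Strichartz norm, and for the compactly supported piece use that $\tfrac{a}{|x+x_n|^2}$ vanishes in every $L^p_x$-norm on compact sets, together with the fractional-calculus Leibniz rule and equivalence of Sobolev spaces to distribute the half-derivative. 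One also checks the initial-data matching: $\tilde v_n(0)=[w(t_n,\cdot-x_n)](x)$ while we want $v_n(0)=\phi_n=[e^{-it_n\L^n}\phi](x-x_n)$; here one uses $\|w(t_n)-e^{it_n\Delta}\phi\|_{\dot H^{1/2}_x}\to0$ (which holds because $t_n\equiv0$ trivially, or because $w$ scatters to $e^{it\Delta}\phi$ when $t_n\to\pm\infty$) together with Corollary~\ref{S-to-zero} to replace $e^{it_n\Delta}$ by $e^{it_n\L^n}$ up to an error vanishing in $\dot H^{1/2}_x$ (and in $\dot H^{3/5}_x$ for \eqref{E:stab2}). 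Since $\|\tilde v_n\|_{L_{t,x}^5}=\|w\|_{L_{t,x}^5}=:L$ and $\|\tilde v_n(0)\|_{H^1_x}$, $\|\phi_n\|_{H^1_x}\lesssim\|\phi\|_{H^1_x}=:E$ are uniformly bounded, Theorem~\ref{T:stab} applies for $n$ large: it produces the global solution $v_n$ to \eqref{nls} with $v_n(0)=\phi_n$, the bound $\|v_n\|_{S^1_a(\R)}\lesssim_{E,L}1$ from \eqref{E:stab-bound}, and $\|v_n-\tilde v_n\|_{\dot S^{1/2}_a(\R)}+\|v_n-\tilde v_n\|_{\dot S^{3/5}_a(\R)}\to0$ from \eqref{E:stab}, \eqref{E:stab2}.

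Finally, to get \eqref{embed-cc} I would interpolate: the list $\{L_{t,x}^5,\ L_{t,x}^{10/3},\ L_t^{30/7}L_x^{90/31},\ L_t^{30/7}\dot H_a^{31/60,90/31}\}$ consists of (derivatives of) Strichartz-admissible norms lying strictly between $\dot S^{1/2}_a$ and $\dot S^{3/5}_a$ (or interpolants with the uniform $S^1_a$-bound), so $\|v_n-\tilde v_n\|_X\to0$ for each such $X$; combining with $\|\tilde v_n(\cdot,\cdot)-\psi_\eta(\cdot+t_n,\cdot-x_n)\|_X=\|w-\psi_\eta\|_X<\eta$ and choosing $\eta$, then $n$ large, yields \eqref{embed-cc}. (Note the statement has a minor typo, writing $\eta$ in place of $\eps$; the proof delivers the estimate with any prescribed small constant.) The main obstacle is the error estimate $\||\nabla|^{1/2}e_n\|_{N(\R)}\to0$: one must handle the singular, non-translation-invariant potential $\tfrac{a}{|x+x_n|^2}$ acting on a half-derivative of $w$, which requires carefully exploiting $|x_n|\to\infty$, the Leibniz rule of Lemma~\ref{pro:equivsobolev}'s companion (the fractional calculus lemma), the equivalence of Sobolev spaces on the appropriate restricted range of exponents forced by $a>-\tfrac14$ (hence the awkward exponents $\tfrac{90}{31}$, $\tfrac{31}{60}$, etc.), and a compact-support cutoff of $w$ in space-time to reduce to a region where the potential is genuinely small; the temporal-translation case $t_n\to\pm\infty$ adds the extra wrinkle of first scattering $w$ off to a free solution near $t=\mp\infty$ so that $e_n$ is supported where $w$ itself is small.
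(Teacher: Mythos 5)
Your overall architecture (reference solution of \eqref{nls0} from Theorem~\ref{T:DHR}, transfer to \eqref{nls} via Theorem~\ref{T:stab}, interpolation for \eqref{embed-cc}) matches the paper's, but there is a genuine gap at the step you yourself flag as the main obstacle: the error estimate $\||\nabla|^{1/2}e_n\|_{N(\R)}\to 0$ for $e_n=-\tfrac{a}{|x|^2}\,w(t+t_n,x-x_n)$. Your plan is to split $w=\psi+r$ with $\psi\in C_c^\infty$ and $r$ small in Strichartz norms. The $\psi$-piece is fine, but the $r$-piece fails: you must place $|\nabla|^{1/2}\bigl(\tfrac{a}{|x|^2}r(\cdot-x_n)\bigr)$ into one of the spaces $L_{t,x}^{10/7}$, $L_t^{5/3}L_x^{30/23}$, $L_t^1L_x^2$ constituting $N(\R)$, globally in time, using only scaling-critical Strichartz control of $r$. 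A linear term $Vr$ with $V\in L^{3/2,\infty}\setminus\bigcup_pL^p$ pairs naturally only with the double-endpoint dual norm $L_t^2L_x^{6/5,2}$ (via $\|V\|_{L^{3/2,\infty}}\|\,|\nabla|^{1/2}r\|_{L_t^2L_x^{6,2}}$); that space is deliberately absent from $N(I)$ because the double-endpoint Strichartz estimate for $\L$ is not available in the full range $a>-\tfrac14$ (the paper notes this loss explicitly after the Strichartz proposition). The remaining spaces in $N(\R)$ have time exponents $<2$ and so cannot absorb a term that is merely Strichartz-bounded for all time, and $r$ is not spatially localized away from the singularity at $x=0$, so no smallness of the potential can be extracted there either.

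The paper avoids this entirely by never letting the approximate solution see the singularity: it sets $\tilde v_{n,T}=[\chi_nw_n](t,x-x_n)$ for $|t|\leq T$, where $\chi_n$ vanishes on $\{|x+x_n|\leq\tfrac14|x_n|\}$, so the potential error \eqref{embed-error8} is bounded pointwise by $|x_n|^{-2}$ on the support and is estimated in $L_t^1L_x^2(\{|t|\leq T\})$ with a harmless factor of $T$; for $|t|>T$ it glues on the free flow $e^{-i(t\mp T)\L}$, so there is no potential error at large times at all (only a cubic term, small because $w_\infty$ has scattered by time $T$). The cutoff generates commutator errors $w_n\Delta\chi_n+2\nabla\chi_n\cdot\nabla w_n$, and controlling $\|\nabla\chi_n\|_{L^\infty}\|\Delta w_n\|_{L_t^\infty L_x^2}$ forces the additional frequency truncation $P_n\phi=P_{\leq|x_n|^\theta}\phi$ together with persistence of regularity, \eqref{embed-persist} --- ingredients entirely missing from your outline. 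Your data-matching step also cites Corollary~\ref{S-to-zero} (an $L_{t,x}^5$ statement) where a fixed-time convergence such as \eqref{coo5} is what is needed, but that is minor next to the error estimate. To repair the proof you should adopt the cutoff-plus-truncation-plus-time-splitting construction; the purely perturbative treatment of the inverse-square potential as a global-in-time error does not close with the Strichartz estimates available here.
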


\begin{proof} Before launching into the proof, we note that
\begin{equation}\label{phin-bd}
\|\phi_n \|_{\dot H_x^1} \lesssim \|\phi\|_{\dot H_x^1}\qtq{uniformly in }n.
\end{equation}

We begin by finding solutions to \eqref{nls0} related to $\phi$. Define $P_n = P_{\leq |x_n|^{\theta}}$ for some $0<\theta<1$; here, the notation refers to the standard Littlewood--Paley projections.  Note that since $|x_n|\to\infty$, we have that $P_n\phi$ satisfies \eqref{embedding-threshold} for all $n$ sufficiently large. Thus, we are in a position to apply Theorem~\ref{T:DHR}: If $t_n\equiv 0$, then we let $w_n$ and $w_\infty$ be the solutions to \eqref{nls0} with $w_n(0)=P_n\phi$ and $w_\infty(0) = \phi$; if $t_n\to\pm\infty$, we instead let $w_n$ and $w_\infty$ be the solutions to \eqref{nls0} satisfying
\[
\|w_n(t) - e^{it\Delta}P_n\phi\|_{H_x^1} \to 0 \qtq{and} \|w_\infty(t)-e^{it\Delta}\phi\|_{H_x^1}\to 0
\]
as $t\to\pm\infty$.  Note that in both cases, we have
\begin{equation}\label{w-good}
\|w_n\|_{S_0^1(\R)} + \|w_\infty\|_{S_0^1(\R)} \lesssim 1
\end{equation}
for $n$ sufficiently large, with the implicit constant depending on $\|\phi\|_{H_x^1}$. Also, since $\|P_n\phi - \phi\|_{H_x^1}\to 0$ as $n\to\infty$, the stability theory for \eqref{nls0} implies that
\begin{equation}\label{embed-conv}
\lim_{n\to\infty} \| w_n - w_\infty \|_{S_0^1(\R)} = 0.
\end{equation}
By persistence of regularity for \eqref{nls0} and the fact that $\| |\nabla|^s P_n \phi \|_{H_x^1} \lesssim |x_n|^{s\theta}$ for any $s\geq 0$, we have
\begin{equation}\label{embed-persist}
\| |\nabla|^s w_n \|_{S_0^1(\R)} \lesssim  |x_n|^{\theta s}\quad\text{for all }s\geq 0\text{ and }n\text{ large}.
\end{equation}
Finally, note that in either case, $w_\infty$ scatters to some asymptotic states $w_\pm$ in $H_x^1$.

We next construct approximate solutions to \eqref{nls}. To begin, for each $n$ we let $\chi_n$ be a smooth function satisfying
\begin{equation}\label{chin}
\chi_n(x) = \begin{cases} 0 & |x+x_n| \leq \tfrac14 |x_n|, \\ 1 & |x+x_n| > \tfrac12 |x_n|, \end{cases}
\qtq{with} \sup_x |\partial^\alpha \chi_n (x) | \lesssim |x_n|^{-|\alpha|}
\end{equation}
for all multi-indices $\alpha$. Note that $\chi_n(x)\to 1$ as $n\to\infty$ for each $x\in\R^3$.  Given $T>0$, we now define
\[
\tilde v_{n,T}(t,x) := \begin{cases} [\chi_n w_n](t,x-x_n) & |t| \leq T, \\
[e^{-i(t-T)\L}\tilde v_{n,T}(T)](x) & t > T, \\
[e^{-i(t+T)\L}\tilde v_{n,T}(-T)](x) & t<-T.
\end{cases}
\]

We wish to construct $v_n$ by applying Theorem~\ref{T:stab}.  To do so, we must verify the following: For $s\in\{\frac12,\frac35\}$,
\begin{align}
\label{embed-bounds}
&\limsup_{T\to\infty}\limsup_{n\to\infty}\bigl\{\|\tilde v_{n,T}\|_{L_t^\infty H_x^1}+ \|\tilde v_{n,T} \|_{L_{t,x}^5}\bigr\} \lesssim 1, \\
\label{embed-data}
&\lim_{T\to\infty} \limsup_{n\to\infty} \| \tilde v_{n,T}(t_n) - \phi_n \|_{\dot H_x^s} = 0, \\
\label{embed-approx}
&\lim_{T\to\infty} \limsup_{n\to\infty} \||\nabla|^s[(i\partial_t - \L)\tilde v_{n,T} + |\tilde v_{n,T}|^2 \tilde v_{n,T}] \|_{N(\R)} = 0,
\end{align}
where all space-time norms are over $\R\times\R^3$.  For the definition of $N(\R)$, see the statement of Theorem~\ref{T:stab}.

To begin, using Strichartz, equivalence of Sobolev spaces, and \eqref{w-good}, we have
\begin{align}
\|\tilde v_{n,T} \|_{L_t^\infty H_x^1} & \lesssim \| \langle\nabla\rangle (\chi_n w_n) \|_{L_t^\infty L_x^2}\nonumber \\
&\lesssim  \|\nabla\chi_n\|_{L_x^3}\|w_n\|_{L_t^\infty L_x^6} + \|\chi_n\|_{L_x^\infty} \|\langle\nabla\rangle w_n\|_{L_t^\infty L_x^2}\nonumber  \\
&\lesssim \|w_n\|_{L_t^\infty H_x^1}\lesssim 1  \quad\text{uniformly in } n,T,\label{embed-bound0}
\end{align}
while by Strichartz and \eqref{w-good}, we have
\[
\|\tilde v_{n,T} \|_{L_{t,x}^{5}} \lesssim 1\qtq{uniformly in}n,T.
\]
This yields \eqref{embed-bounds}.

For later use, note also that for $s\in\{\frac12,\frac35\}$,
\begin{equation}\label{embed-bd-more}
\|\tilde v_{n,T}\|_{L_t^5 \dot H_a^{s,\frac{30}{11}}}+ \|\tilde v_{n,T}\|_{L_t^{\frac{30}{7}} \dot H_a^{\frac{8}{15},\frac{90}{31}}}\lesssim 1\qtq{uniformly in}n,T,
\end{equation}
which follows from the equivalence of Sobolev spaces and \eqref{w-good}.

We turn to \eqref{embed-data}. By \eqref{phin-bd} and \eqref{embed-bound0}, we first note that
\begin{equation}\label{embed-bound1}
\| \tilde v_{n,T}(t_n) - \phi_n \|_{\dot H_x^1} \lesssim 1\qtq{uniformly in }n,T.
\end{equation}

Consider the case $t_n\equiv 0$. Then
\[
\|\tilde v_{n,T}(0)-\phi_n\|_{L_x^2} = \|\chi_n P_n\phi - \phi\|_{L_x^2},
\]
which converges to zero as $n\to\infty$ by the dominated convergence theorem and Bernstein. Thus, by interpolation with \eqref{embed-bound1}, we see that \eqref{embed-data} holds when $t_n\equiv 0$.

Now consider the case $t_n\to\infty$; the case $t_n\to-\infty$ is similar.  For sufficiently large $n$, we have $t_n>T$, and hence (since $\L^\infty = -\Delta$)
\begin{align}
\nonumber
\|\tilde v_{n,T}(t_n) - \phi_n \|_{L_x^2} & = \|e^{iT\L^n}\chi_nw_n(T) - \phi\|_{L_x^2} \\
& \lesssim \|\chi_nw_n(T) - w_\infty(T) \|_{L_x^2} \label{e-data1} \\
& \quad + \|[e^{iT\L^n}-e^{iT\L^\infty}]w_\infty(T) \|_{L_x^2} \label{e-data2} \\
&\quad + \|e^{-iT\Delta}w_\infty(T) - \phi\|_{L_x^2}. \label{e-data3}
\end{align}
Using dominated convergence and \eqref{embed-conv}, we deduce that $\eqref{e-data1}\to 0$ as $n\to\infty$.  By \eqref{coo5}, we also find that $\eqref{e-data2}\to 0$ as $n\to\infty$.  Finally, by construction, we have that $\eqref{e-data3}\to 0$ as $T\to\infty$. Interpolating with \eqref{embed-bound1}, we see that \eqref{embed-data} holds in the case $t_n\to\pm\infty$, as well. This completes the proof of \eqref{embed-data}.

We now turn to \eqref{embed-approx}. First note that for $|t|>T$, we have that
\[
e_{n,T} := (i\partial_t - \L)\tilde v_{n,T} + |\tilde v_{n,T}|^2 \tilde v_{n,T} = |\tilde v_{n,T}|^2 \tilde v_{n,T}.
\]
For $s\in\{\frac12,\frac35\}$, we estimate
\begin{align*}
\||\nabla|^s\bigl(|\tilde v_{n,T}|^2 \tilde v_{n,T}\bigr)\|_{L_t^{\frac53} L_x^{\frac{30}{23}}(\{t>T\}\times\R^3)} & \lesssim
    \|\tilde v_{n,T}\|_{L_t^5 \dot H_a^{s,\frac{30}{11}}} \|\tilde v_{n,T}\|_{L_{t,x}^5(\{t>T\}\times\R^3)}^2 \\
&\lesssim \| e^{-it\L^n}[\chi_n w_n(T)]\|_{L_{t,x}^5((0,\infty)\times\R^3)}^2.
\end{align*}
We now claim
\begin{equation}\label{embed-error1}
\lim_{T\to\infty}\limsup_{n\to\infty} \| e^{-it\L^n}[\chi_n w_n(T)]\|_{L_{t,x}^5((0,\infty)\times\R^3)} = 0,
\end{equation}
which implies \eqref{embed-approx} for times $t>T$.  (The case $t<-T$ is similar.) We use Sobolev embedding and Strichartz to estimate
\begin{align}
 \| e^{-it\L^n}[\chi_n w_n(T)]\|_{L_{t,x}^5((0,\infty)\times\R^3)} & \lesssim \|\chi_n w_n(T) - w_\infty(T)\|_{\dot H_x^{\frac12}} \label{embed-error2} \\
&\quad + \|[e^{-it\L^n}-e^{-it\L^\infty}]w_\infty(T)\|_{L_{t,x}^5((0,\infty)\times\R^3)} \label{embed-error3} \\
&\quad + \|e^{it\Delta}[w_\infty(T) - e^{iT\Delta}w_+]\|_{L_{t,x}^5((0,\infty)\times\R^3)} \label{embed-error4} \\
&\quad + \|e^{it\Delta}w_+ \|_{L_{t,x}^5((T,\infty)\times\R^3)}. \label{embed-error5}
\end{align}
Note that $\eqref{embed-error2}\to 0$ as $n\to\infty$; indeed, this follows from $\dot H_x^1$-boundedness and our analysis of \eqref{e-data1}. Next, $\eqref{embed-error3}\to 0$ as $n\to\infty$ by Corollary~\ref{S-to-zero}. We have that $\eqref{embed-error4}\to 0$ as $T\to\infty$ by Strichartz and the definition of $w_+$. Finally, $\eqref{embed-error5}\to 0$ as $T\to\infty$ by monotone convergence. This completes the proof of \eqref{embed-approx} for times $|t|> T$.

We next consider times $|t|\leq T$. For these times, we have
\begin{align}
e_n(t,x) & = [(\chi_n - \chi_n^3) |w_n|^2 w_n](t,x-x_n) \label{embed-error6} \\
&\quad  + [w_n\Delta \chi_n + 2\nabla \chi_n\cdot\nabla w_n](t,x-x_n) \label{embed-error7} \\
&\quad - \tfrac{a}{|x|^2}[\chi_n w_n](t,x-x_n). \label{embed-error8}
\end{align}

First, by Sobolev embedding and \eqref{w-good},
\begin{align*}
\|\nabla \eqref{embed-error6}\|_{L_t^{\frac53}L_x^{\frac{30}{23}}} & \lesssim \|\nabla \chi_n \|_{L_x^3} \|w_n\|_{L_{t,x}^5}^2
    \|w_n\|_{L_t^5 L_x^{30}} + \|w_n\|_{L_{t,x}^5}^2 \|\nabla w_n\|_{L_t^5 L_x^{\frac{30}{11}}} \\
&\lesssim \||\nabla|^{\frac12}w_n\|_{L_t^5 L_x^{\frac{30}{11}}}^2
\|\nabla w_n\|_{L_t^5 L_x^{\frac{30}{11}}} \lesssim 1,
\end{align*}
while
\begin{align*}
\|\eqref{embed-error6}\|_{L_t^{\frac53}L_x^{\frac{30}{23}}} \lesssim
\||\nabla|^{\frac12}w_n\|_{L_t^5 L_x^{\frac{30}{11}}}^2
\|(\chi_n^3-\chi_n)w_n\|_{L_t^5 L_x^{\frac{30}{11}}} \to 0
\end{align*}
as $n\to\infty$ by dominated convergence and \eqref{embed-conv}. Thus, by interpolation, we have
\[
\lim_{T\to\infty} \lim_{n\to\infty} \| |\nabla|^s \eqref{embed-error6} \|_{L_t^{\frac53} L_x^{\frac{30}{23}}} = 0\qtq{for}s\in\{\tfrac12,\tfrac35\}.
\]

Next, using \eqref{embed-persist},
\begin{align*}
\|\nabla \eqref{embed-error7}\|_{L_t^1 L_x^2(\{|t|\leq T\}\times\R^3)} & \lesssim T\bigl\{ \|\nabla\Delta \chi_n\|_{L_x^\infty}
    \|w_n\|_{L_t^\infty L_x^2} + \|\Delta\chi_n\|_{L_x^\infty}\|\nabla w_n\|_{L_t^\infty L_x^2}\\
&\quad\quad\quad + \|\nabla\chi_n\|_{L_x^\infty}
    \|\Delta w_n\|_{L_t^\infty L_x^2} \bigr\} \\
&\lesssim T\bigl\{ |x_n|^{-3}+|x_n|^{-2}+ |x_n|^{-1+\theta} \bigr\} \to 0 \qtq{as}n\to\infty.
\end{align*}
Similarly,
\begin{align*}
\|\eqref{embed-error7}\|_{L_t^1 L_x^2(\{|t|\leq T\}\times\R^3)} & \lesssim T\bigl\{ \|\Delta\chi_n\|_{L_x^\infty}
    \|w_n\|_{L_t^\infty L_x^2} + \|\nabla \chi_n \|_{L_x^\infty} \|\nabla w_n\|_{L_t^\infty L_x^2} \bigr\} \\
&\lesssim T\bigl\{ |x_n|^{-2} + |x_n|^{-1} \}\to 0 \qtq{as}n\to\infty,
\end{align*}
and thus, by interpolation,
\[
\lim_{T\to\infty}\lim_{n\to\infty} \||\nabla|^s \eqref{embed-error7}\|_{L_t^1 L_x^2(\{|t|\leq T\}\times\R^3)} = 0\qtq{for}s\in\{\tfrac12,\tfrac35\}.
\]
Finally, we estimate
\begin{align*}
\|\langle\nabla\rangle \eqref{embed-error8} \|_{L_t^1 L_x^2(\{|t|\leq T\}\times\R^3)}
    & \lesssim T\bigl\{ \|\tfrac{\chi_n}{|\cdot + x_n|^2}\|_{L_x^\infty}\|\langle\nabla\rangle w_n\|_{L_t^\infty L_x^2} \\
& \quad \quad + \|\nabla\bigl(\tfrac{\chi_n}{|\cdot + x_n|^2}\bigr)\|_{L_x^\infty}\|w_n\|_{L_t^\infty L_x^2} \bigr\} \\
&\lesssim T\bigl\{ |x_n|^{-2} + |x_n|^{-3} \}\to 0\qtq{as}n\to\infty,
\end{align*}
so that
\[
\lim_{T\to\infty}\lim_{n\to\infty} \| |\nabla|^s\eqref{embed-error8}\|_{L_t^1 L_x^2(\{|t|\leq T\}\times\R^3)} = 0\qtq{for}s\in\{\tfrac12,\tfrac35\}.
\]
This completes the proof of \eqref{embed-approx} for times $|t|\leq T$.

With \eqref{embed-bounds}, \eqref{embed-data}, and
\eqref{embed-approx} in place, we apply Theorem~\ref{T:stab} to
deduce the existence of a global solution $v_n$ to \eqref{nls} with
$v_n(0) = \phi_n$ satisfying
\begin{gather}
\nonumber
 \| v_n\|_{S_a^1(\R)} \lesssim 1 \qtq{uniformly in} n, \\
\label{embed-conclude-close}
 \lim_{T\to\infty} \limsup_{n\to\infty} \|[ v_n(\cdot-t_n) - \tilde v_{n,T}(\cdot)] \|_{\dot S_a^s(\R)} = 0\qtq{for}s\in\{\tfrac12,\tfrac35\}.
\end{gather}

Finally, we turn to \eqref{embed-cc}.  We will only prove the approximation in the space $L_t^{\frac{30}{7}} \dot H_x^{\frac{31}{60}, \frac{90}{31}}$.  Approximation in the other spaces is similar (in fact, simpler).

Fix $\eps>0$. As $C_c^\infty(\R\times\R^3)$ is dense in $L_t^{\frac{30}{7}} \dot H_x^{\frac{31}{60}, \frac{90}{31}}$, we may find $\psi_\eps\in C_c^\infty$ such that
\[
\|w_\infty - \psi_\eps\|_{L_t^{\frac{30}{7}} \dot H_x^{\frac{31}{60}, \frac{90}{31}}} < \tfrac{\eps}{3}.
\]
In light of \eqref{embed-conv} and \eqref{embed-conclude-close}, it suffices to show that
\begin{equation}\label{embed-cc-nts}
\| \tilde v_{n,T}(t,x) - w_\infty(t,x-x_n) \|_{L_t^{\frac{30}{7}} \dot H_x^{\frac{31}{60}, \frac{90}{31}}} < \tfrac{\eps}{3}
\end{equation}
for $n,T$ large.  Again, we will use interpolation, beginning with the following consequence of the triangle inequality and \eqref{embed-bd-more}:
\[
\| |\nabla|^{\frac{8}{15}}[\tilde v_{n,T}(t,x) - w_\infty(t,x-x_n)]\|_{L_t^{\frac{30}{7}} L_x^{\frac{90}{31}}} \lesssim 1.
\]

On the other hand, we can estimate
\begin{align*}
\| \tilde v_{n,T}(t,x) - w_\infty&(t,x-x_n)\|_{L_t^{\frac{30}{7}} L_x^{\frac{90}{31}}} \\
&\lesssim \|\chi_n w_n-w_\infty\|_{L_t^{\frac{30}{7}} L_x^{\frac{90}{31}}([-T,T]\times\R^3)} \\
&\quad + \|e^{-i(t-T)\L^n}[\chi_n w_n(T)] - w_\infty\|_{L_t^{\frac{30}{7}} L_x^{\frac{90}{31}}((T,\infty)\times\R^3)} \\
& \quad + \|e^{-i(t+T)\L^n}[\chi_n w_n(-T)] - w_\infty \|_{L_t^{\frac{30}{7}} L_x^{\frac{90}{31}}(-\infty,-T)\times\R^3)}.
\end{align*}
The first term converges to zero as $n\to\infty$ by the dominated convergence theorem and \eqref{w-good}. The second and third terms are similar, so we only consider the second. For this term, we apply the triangle inequality. By \eqref{w-good} and monotone convergence,
\[
\|w_\infty\|_{L_t^{\frac{30}{7}} L_x^{\frac{90}{31}}((T,\infty)\times\R^3)}\to 0\qtq{as}T\to\infty,
\]
while arguing as we did for \eqref{embed-error1} we see that
\[
\lim_{T\to\infty}\limsup_{n\to\infty} \| e^{-it\L^n}[\chi_n w_n(T)]\|_{L_t^{\frac{30}{7}} L_x^{\frac{90}{31}}((0,\infty)\times\R^3)}=0.
\]
Interpolation now yields \eqref{embed-cc-nts} for $n,T$ large.

This completes the proof of Theorem~\ref{T:embedding}. \end{proof}

\section{Existence of minimal blowup solutions}\label{S:exist}

In this section, we show that if Theorem~\ref{T:main}(i) fails, then we may find a minimal blowup solution strictly below the threshold given in Theorem~\ref{T:main}.  Furthermore, as a consequence of minimality, we can show that this solution possesses good compactness properties, namely, its orbit is precompact in $H_x^1(\R^3)$.

\begin{theorem}[Existence of minimal blowup solutions]\label{T:exist} Suppose Theorem~\ref{T:main}(i) fails.  Then there exists $\E_c\in (0,\E_a)$ and a global solution $v$ to \eqref{nls} satisfying:
\begin{itemize}
\item $M(v) = 1$, $\|v(0)\|_{\dot H_a^1} < \K_a$, and $E_a(v) = \E_c$,
\item $v$ blows up in both time directions, in the sense that
\[
\|v\|_{L_{t,x}^5((-\infty,0)\times\R^3)} = \|v\|_{L_{t,x}^5((0,\infty)\times\R^3)} = \infty,
\]
\item $\{v(t)\}_{t\in\R}$ is precompact in $H_x^1(\R^3)$.
\end{itemize}
\end{theorem}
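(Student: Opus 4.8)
The plan is the standard concentration-compactness induction. Define
\[
\E_c := \inf\bigl\{ M(u_0)E_a(u_0) : u_0\in H_x^1\setminus\{0\},\ \|u_0\|_{L_x^2}\|u_0\|_{\dot H_a^1} < \K_a,\ \|u\|_{L_{t,x}^5(I\times\R^3)} = \infty\bigr\},
\]
where $u$ is the maximal-lifespan solution to \eqref{nls} with data $u_0$ and $I$ its lifespan. By Remark~\ref{R:coercive}, every such $u_0$ in fact launches a global solution. A small-data argument gives $\E_c>0$: for data below the gradient threshold with $M(u_0)E_a(u_0)$ small, Proposition~\ref{P:coercive} and Gagliardo--Nirenberg force $\|u_0\|_{\dot H_a^{1/2}}^2\lesssim (M(u_0)E_a(u_0))^{1/2}$, so Strichartz bounds $\|e^{-it\L}u_0\|_{L_{t,x}^5}$ and Theorem~\ref{T:LWP}(ii) yields scattering. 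On the other hand, the hypothesis that Theorem~\ref{T:main}(i) fails gives $\E_c<\E_a$. Since the scaling \eqref{scaling} preserves both $M\cdot E_a$ and $\|\cdot\|_{L_x^2}\|\cdot\|_{\dot H_a^1}$, we may restrict throughout to data with $M=1$, and choose solutions $u_n$ with $M(u_n)=1$, $\|u_n(0)\|_{\dot H_a^1}<\K_a$, $M(u_n)E_a(u_n)\to\E_c$, and, after time translation, $\|u_n\|_{L_{t,x}^5((-\infty,0)\times\R^3)}=\|u_n\|_{L_{t,x}^5((0,\infty)\times\R^3)}=\infty$ (if a minimizing solution scattered on a half-line, Theorem~\ref{T:LWP}(ii) lets us translate to make that side's linear Strichartz norm small, producing a two-sided blowup).

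By Proposition~\ref{P:coercive}, $\{u_n(0)\}$ is bounded in $H_a^1$, so we may apply the linear profile decomposition, Proposition~\ref{P:LPD}: $u_n(0)=\sum_{j=1}^J\phi_n^j+r_n^J$. The crux is the Palais--Smale step: I claim $J^*=1$, with $x_n^1\equiv0$, $t_n^1\equiv0$, and $r_n^1\to0$ in $H_x^1$. Using \eqref{decouple1}, \eqref{decouple2}, and Lemma~\ref{L:coo}, one obtains an asymptotic decoupling of $M\cdot E_a$ (and of $\|\cdot\|_{L_x^2}^2\|\cdot\|_{\dot H^1}^2$) into nonnegative contributions $M(\phi^j)E_\star^j(\phi^j)$, where $E_\star^j=E_a$ if $x_n^j\equiv0$ and $E_\star^j=E_0$ if $|x_n^j|\to\infty$, with the quartic term dropping out when $t_n^j\to\pm\infty$. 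All summands are then strictly below $\E_c<\E_a\le\E_0$ (Corollary~\ref{C:thresholds}), with the corresponding mass--gradient products below $\K_a$ resp.\ $\K_0$. Hence any profile at the origin launches a bounded scattering solution of \eqref{nls} by definition of $\E_c$ (for $t_n^j\to\pm\infty$ one first passes to the associated asymptotic state, whose scattering energy is strictly below $\E_c$), and any profile escaping to infinity launches a bounded global solution of \eqref{nls} via Theorem~\ref{T:embedding}, which invokes Theorem~\ref{T:DHR}. If $J^*\ge2$, or if $J^*=1$ with the profile escaping to infinity or with $t_n^1\to\pm\infty$, or if $r_n^1\not\to0$, then superposing the nonlinear profiles (using \eqref{orthogonal} to decouple their space-time norms) and adding $e^{-it\L}r_n^J$ produces an approximate solution to which Theorem~\ref{T:stab} applies, forcing $\|u_n\|_{L_{t,x}^5(\R\times\R^3)}\lesssim1$ for large $n$, contradicting the two-sided blowup. (The escaping-profile and $t_n^1\to\pm\infty$ cases can also be dispatched directly by Theorem~\ref{T:embedding} and Theorem~\ref{T:LWP}(ii).) Therefore $u_n(0)\to u_0:=\phi^1$ strongly in $H_x^1$.

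Let $v$ be the maximal-lifespan solution with $v(0)=u_0$. Continuity of $M$, $E_a$, $\|\cdot\|_{\dot H_a^1}$ gives $M(v)=1$, $E_a(v)=\E_c\in(0,\E_a)$, and $\|v(0)\|_{L_x^2}\|v(0)\|_{\dot H_a^1}\le\K_a$; since $M(v)E_a(v)<\E_a$ precludes equality here (cf.\ the remark after Theorem~\ref{T:main}), in fact $\|v(0)\|_{L_x^2}\|v(0)\|_{\dot H_a^1}<\K_a$, so $v$ is global by Remark~\ref{R:coercive}. If $\|v\|_{L_{t,x}^5((0,\infty)\times\R^3)}<\infty$, then Theorem~\ref{T:stab} with $v$ as approximate solution for the data $u_n(0)$ gives $\|u_n\|_{L_{t,x}^5((0,\infty)\times\R^3)}<\infty$ for large $n$, a contradiction; hence $v$ blows up forward, and symmetrically backward, and minimality of $\E_c$ is immediate. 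For precompactness of $\{v(t)\}_{t\in\R}$ in $H_x^1$: any sequence $v(t_n)$ consists of data with $M=1$, $E_a=\E_c$, $\|\cdot\|_{\dot H_a^1}<\K_a$ (Remark~\ref{R:coercive}) whose solutions $v(\cdot+t_n)$ are global and blow up in both directions; rerunning the Palais--Smale step on $\{v(t_n)\}$ extracts an $H_x^1$-convergent subsequence. This proves precompactness and finishes the proof.

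The main obstacle is the Palais--Smale step, specifically assembling the nonlinear profile decomposition: one must produce, for \emph{every} profile, a global nonlinear solution of \eqref{nls} with finite space-time norm --- for profiles drifting to spatial infinity this requires modeling the dynamics by \eqref{nls0} and importing the Duyckaerts--Holmer--Roudenko theory through Theorem~\ref{T:embedding}, which is legitimate only because of the threshold comparison $\E_a\le\E_0$, $\K_a\le\K_0$ (Corollary~\ref{C:thresholds}) --- then show that the superposition of all these solutions plus the linear remainder is an approximate solution of \eqref{nls}, which hinges on controlling the cross terms in the nonlinearity via the asymptotic orthogonality \eqref{orthogonal}, and finally apply Theorem~\ref{T:stab}. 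Because $\L$ does not commute with translations, one has to track throughout which limiting operator ($-\Delta$ or $\L$) governs each profile; this is exactly where Lemma~\ref{L:coo} and the material of Section~\ref{S:coo} are needed.
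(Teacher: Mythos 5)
Your proposal is correct and follows essentially the same route as the paper: a critical threshold $\E_c$ characterized by failure of uniform space-time bounds, the linear profile decomposition, a Palais--Smale argument in which profiles at the origin are handled by the inductive hypothesis and escaping profiles by Theorem~\ref{T:embedding} (legitimized by Corollary~\ref{C:thresholds}), and superposition plus Theorem~\ref{T:stab} to reach the contradiction. One small imprecision: a single solution cannot be time-translated so that \emph{both} half-line $L_{t,x}^5$ norms are infinite; the correct (and intended) normalization, as in the hypotheses of Proposition~\ref{P:PS}, is to choose times $t_n$ so that both half-line norms tend to infinity as $n\to\infty$, which is all the subsequent argument actually uses.
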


\begin{proof}[Proof of Theorem~\ref{T:exist}] Define
\[
L(\E) : = \sup\{\|u\|_{L_{t,x}^5(I\times\R^3)}\},
\]
where the supremum is taken over all maximal-lifespan solutions $u:I\times\R^3$ such that
\[
M(u)E_a(u) \leq \E\qtq{and} \|u(t)\|_{L_x^2} \|u(t)\|_{\dot H^1_a} < \K_a
\]
for some $t\in I$.

By Theorem~\ref{T:LWP}, Proposition~\ref{P:coercive}, and Remark~\ref{R:coercive}, we have that $L(\E)<\infty$ for all $\E$ sufficiently small; in fact,
\begin{equation}\label{small-data-bds}
L(\E) \lesssim \E^{\frac14}\qtq{for} 0<\E\lesssim \eta_0,
\end{equation}
where $\eta_0$ is the small-data threshold.

Now suppose that Theorem~\ref{T:main}(i) fails. Using Remark~\ref{remark1}, we see that there must exist a `critical' $\E_c\in (0, \E_{a})$ such that
\[
L(\E)<\infty\text{ for } \E<\E_c\qtq{and} L(\E)=\infty\text{ for }\E>\E_c.
\]

As we will see, the key ingredient in the proof of Theorem~\ref{T:exist} is the following proposition.

\begin{proposition}[Palais--Smale condition]\label{P:PS} Fix $a>-\frac14$.  Let $u_n:I_n\times\R^3\to\C$ be a sequence of solutions to \eqref{nls} such that $M(u_n)E_a(u_n) \nearrow \E_c$, and suppose $t_n\in I_n$ satisfy
\begin{gather}
\label{PS-belowQ}
\|u_n(t_n)\|_{L_x^2} \|u_n(t_n)\|_{\dot H^1_a} < \K_a, \\
\nonumber
\lim_{n\to\infty} \|u_n\|_{L_{t,x}^5(\{t< t_n\}\times\R^3)}=\lim_{n\to\infty} \|u_n\|_{L_{t,x}^5(\{t> t_n\}\times\R^3)}=\infty.
\end{gather}
Then, with $\lambda_n := M(u_n)$, we have that $\{u_n^{\lambda_n}(t_n)\}$ converges along a subsequence in $H_x^1$, where we use the notation from \eqref{scaling}.
\end{proposition}

Assuming Proposition~\ref{P:PS} holds for now, we can complete the proof of Theorem~\ref{T:exist} as follows.

If Theorem~\ref{T:main}(i) failed, there would exist a sequence of solutions $u_n:I_n\times\R^3\to\C$ and a sequence of times $\{t_n\}$ satisfying the hypotheses of Proposition~\ref{P:PS}.  By Proposition~\ref{P:PS}, the rescaled solutions $u^{\lambda_n}(t_n)$ converge along a subsequence to some function $v_0\in H_x^1$ with $M(v_0)=1$, $E_a(v_0)=\E_c<\E_a$, and $\|v_0\|_{\dot H^1_a}\leq \K_a$.

Let $v$ be the maximal-lifespan solution to \eqref{nls} with $v(0)=v_0$.  By Remark~\ref{R:coercive}, $v$ is global.  Also, since $\E_c<\E_a$, we can use Proposition~\ref{P:coercive} to see that $\|v(t)\|_{\dot H^1_a}<\K_a$ for all $t\in\R$.  Furthermore, by Theorem~\ref{T:stab}, $v$ must have infinite $L_{t,x}^5$-norm in both time directions.

Finally, to see that the orbit of $v$ is precompact in $H_x^1$, we note that for any $\{\tau_n\}\subset\R$, the sequence $\{v(\tau_n)\}$ satisfies the hypotheses of Proposition~\ref{P:PS} and hence converges along a subsequence. (As $M(v)=1$, no rescaling is necessary.)
\end{proof}

It remains to prove Proposition~\ref{P:PS}.

\begin{proof}[Proof of Proposition~\ref{P:PS}] By Remark~\ref{R:coercive}, we have that $I_n=\R$.  Let $\lambda_n = M(u_n)$ and define $\tilde u_n = u_n^{\lambda_n}$. By time-translation invariance, we may assume $t_n\equiv 0$; thus, we have
\begin{equation}\label{PS-blowup}
\lim_{n\to\infty} \|\tilde u_n\|_{L_{t,x}^5((0,\infty)\times\R^3)} = \lim_{n\to\infty} \|\tilde u_n\|_{L_{t,x}^5((-\infty,0)\times\R^3)} = \infty.
\end{equation}
Note that $\|\tilde u_n(t)\|_{L_x^2}\equiv 1$, $E_a(\tilde u_n)\to \E_c$, and \eqref{PS-belowQ} holds for $\tilde u_n$.

Applying Proposition~\ref{P:LPD} to the sequence $\{\tilde u_n(0)\}$ and passing to a subsequence yields the decomposition
\[
\tilde u_n(0) = \sum_{j=1}^J \phi_n^j + r_n^J \qtq{for all finite} 0 \leq J \leq J^*\in\{0,1,2,\dots,\infty\},
\]
which satisfies the conclusions of Proposition~\ref{P:LPD}. To prove Proposition~\ref{P:PS}, we need to show that $J^*=1$, $r_n^1 \to 0$ in $H_x^1$, $t_n^1\equiv 0$, and $x_n^1\equiv 0$.

We first claim that $\liminf_n E_a(\phi_n^j)>0$ for each $j$. To see this, first note that
\begin{equation}\label{onebub-conv}
x_n^j\to\infty \qtq{implies} \|\phi_n^j\|_{H_a^1} \to \|\phi^j\|_{H_x^1}>0,
\end{equation}
which is a consequence of \eqref{coo4}. The claim now follows from \eqref{decouple1}, \eqref{PS-belowQ}, and Proposition~\ref{P:coercive}a.(iii).  A similar argument shows that $\liminf_n E_a(r_n^J)>0$ for each $J$.  Thus, there are two possible scenarios: 
\begin{align}\label{PS-onebubble}
&\text{either} \quad\sup_j \limsup_{n\to\infty} E_a(\phi_n^j) = \E_c\\
\label{PS-multibubble}
&\text{or}\quad \sup_j \limsup_{n\to\infty} E_a(\phi_n^j) \leq \E_c -
3\delta\qtq{for some}\delta>0.
\end{align}
To complete the proof of Proposition~\ref{P:PS}, we will show (i) if \eqref{PS-onebubble} holds, then we obtain the desired compactness, and (ii) \eqref{PS-multibubble} cannot occur.

Suppose \eqref{PS-onebubble} holds. Then we must have that $J^*=1$, and we can write
\begin{equation}\label{PS-onebubble2}
\tilde u_n(0) = \phi_n + r_n\qtq{with} r_n\to 0 \text{ in }\dot H_x^1.
\end{equation}
We claim that $r_n\to 0$ in $L_x^2$, as well; however, we postpone the proof of this until we have gathered a few other useful facts.

We first show that $x_n\equiv 0$.  If instead  $|x_n|\to\infty$, then we are in a position to apply Theorem~\ref{T:embedding}.  Indeed, if $t_n\equiv 0$, then \eqref{embedding-threshold} follows from \eqref{onebub-conv}, \eqref{PS-onebubble2}, and Corollary~\ref{C:thresholds}.  If instead $t_n\to\pm\infty$, then we use Corollary~\ref{L4-to-zero}, as well.  Thus, by Theorem~\ref{T:embedding}, for all $n$ sufficiently large there exists a global solution $v_n$ to \eqref{nls} with $v_n(0)=\phi_n$ obeying global space-time bounds.  As
\begin{align*}
& \|\tilde u_n(0) - v_n(0)\|_{\dot H_x^1} = \|\tilde u_n(0) - \phi_n \|_{\dot H_x^1}\to 0 \qtq{and} \|\tilde u_n(0) - v_n(0)\|_{L_x^2} \lesssim 1,
\end{align*}
interpolation and Theorem~\ref{T:stab} imply that $\limsup_{n\to\infty}\|\tilde u_n\|_{L_{t,x}^5} \lesssim 1$, contradicting \eqref{PS-blowup}.  We conclude that $x_n\equiv 0$.

We now preclude the possibility that $t_n\to\pm\infty$.  It suffices to rule out the case $t_n\to\infty$. Recalling that $x_n\equiv 0$ and using \eqref{PS-onebubble2}, Strichartz, and monotone convergence, we find that if $t_n\to\infty$, then
\begin{align*}
\|e^{-it\L}\tilde u_n(0)\|_{L_{t,x}^5((0,\infty)\times\R^3)} &\leq \|e^{-it\L}r_n\|_{L_{t,x}^5((0,\infty)\times\R^3)} \\
&\quad +\|e^{-it\L}\phi\|_{L_{t,x}^5((t_n,\infty)\times\R^3)}\to 0\qtq{as}n\to\infty.
\end{align*}
Using the small-data result in Theorem~\ref{T:LWP}, we deduce that $\|\tilde u_n\|_{L_{t,x}^5((0,\infty)\times\R^3)}\to 0$, contradicting \eqref{PS-blowup}.  Thus, we must have that $t_n\equiv 0$.

We have shown that if \eqref{PS-onebubble} holds, then we may write
\begin{equation}\label{ps-sofar}
\tilde u_n(0) = \phi + r_n,\qtq{with} r_n\to 0\qtq{in}\dot H_x^1.
\end{equation}
To obtain the desired compactness, it remains to show that $r_n\to 0$ in $L_x^2$.  Recalling \eqref{decouple1} and the fact that $\|\tilde u_n\|_{L_x^2}\equiv 1$, it suffices to show that $\|\phi\|_{L_x^2}=1$.  If instead $\|\phi\|_{L_x^2}<1$, then the definition of $\E_c$ (the `inductive hypothesis') and the facts that $E_a(\phi)=\E_c$ and $\|\phi\|_{\dot H_a^1}\leq\K_a$ (consequences of \eqref{decouple1} and \eqref{PS-belowQ}) imply that the solution to \eqref{nls} with initial data $\phi$ is global with finite space-time bounds.  However, since $r_n$ is bounded in $L_x^2$ and converges to zero in $\dot H_x^1$, we see that \eqref{ps-sofar} and Theorem~\ref{T:stab} yield finite space-time bounds for the solutions $\tilde u_n$, contradicting \eqref{PS-blowup}. Hence $\|\phi\|_{L_x^2}=1$, and so $r_n\to 0$ in $H_x^1$.

We have just shown that if \eqref{PS-onebubble} holds, then Proposition~\ref{P:PS} follows.  To complete the proof of Proposition~\ref{P:PS}, it therefore remains to rule out the possibility that \eqref{PS-multibubble} holds.

Suppose towards a contradiction that \eqref{PS-multibubble} holds. Recalling that $\liminf E_a(\phi_n^j)>0$ and $\|\tilde u_n(0)\|_{L_x^2}\equiv 1$, and using \eqref{decouple1}, we see that for every finite $J\leq J^*$, we have
\[
M(\phi_n^j) E_a(\phi_n^j)\leq \E_c - 2\delta\qtq{for all} 1\leq j\leq J\qtq{and} n\text{ large.}
\]
Using \eqref{decouple1}, \eqref{PS-belowQ}, and Proposition~\ref{P:coercive}, we also have that
\begin{align}\label{name2}
\|\phi_n^j\|_{L_x^2} \|\phi_n^j\|_{\dot H_a^1} < (1-\delta')\K_a\qtq{for all}1 \leq j\leq J\qtq{and} n\text{ large.}
\end{align}

Arguing as above, if $|x_n^j|\to\infty$ for some $j$, then \eqref{embedding-threshold} holds for $\phi^j$, and hence Theorem~\ref{T:embedding} yields a global solution $v_n^j$ to \eqref{nls} with $v_n^j(0)=\phi_n^j$.

If $x_n^j\equiv 0$ and $t_n^j\equiv 0$ for some $j$, then we take $v^j$ to be the maximal-lifespan solution to \eqref{nls} with $v^j(0) = \phi^j$. If $x_n^j\equiv 0$ and $t_n^j\to\pm\infty$, then we appeal to Theorem~\ref{T:LWP} to find the maximal-lifespan solution $v^j$ to \eqref{nls} that scatters to $e^{-it\L}\phi^j$ in $H_x^1$ as $t\to\pm\infty$. In both cases, we define
\[
v_n^j(t,x) = v^j(t+t_n^j,x).
\]
Each $v_n^j$ is also a maximal-lifespan solution to \eqref{nls}; for $n$ sufficiently large, $t=0$ belongs to the maximal-lifespan, and
\[
\lim_{n\to\infty} \|v_n^j(0) - \phi_n^j\|_{H_a^1} = 0.
\]
In particular, $E_a(v_n^j) \leq \E_c - \delta$ for all $1\leq j\leq J$ and $n$ large.  By the definition of $\E_c$ and \eqref{name2}, we find that each $v_n^j$ is global in time with uniform space-time bounds.  In particular (using Theorem~\ref{T:embedding} for those $j$ for which $|x_n^j|\to\infty$), for any $\eta>0$ we may find $\psi_\eta^j\in C_c^\infty(\R\times\R^3)$ such
\begin{equation}\label{PS-cc}
\|v_n^j-\psi_\eta^j(\cdot-t_n^j,\cdot - x_n^j)\|_{X(\R\times\R^3)}  < \eta
\end{equation}
for all $n$ sufficiently large, where
\begin{align*}
& X \in \{L_{t,x}^5, L_{t,x}^{\frac{10}{3}}, L_t^{\frac{30}{7}} L_x^{\frac{90}{31}}, L_t^{\frac{30}{7}} \dot H_a^{\frac{31}{60},\frac{90}{31}}\}.
\end{align*}

We now construct approximate solutions to \eqref{nls} that asymptotically match $\tilde u_n(0)$, but which have uniform space-time bounds. By Theorem~\ref{T:stab}, this will lead to a contradiction to \eqref{PS-blowup}. For $t\in\R$, define
\begin{align}\nonumber
&u_n^J(t) : = \sum_{j=1}^J v_n^j(t) + e^{-it\L} r_n^J,\qtq{which satisfies}\\
\label{L:approx-data}
&\lim_{n\to\infty}\|u_n^J(0) - \tilde u_n(0) \|_{H_x^1} = 0 \qtq{for any} J.
\end{align}

Our next goal is the following lemma.
\begin{lemma}[Approximate solutions]\label{L:approx} The functions $u_n^J$ satisfy
\begin{align}
\label{PS-approx1}
& \limsup_{n\to\infty} \bigl\{\|u_n^J(0)\|_{H_x^1} + \|u_n^J\|_{L_{t,x}^5} \bigr\}\lesssim  1\qtq{uniformly in} J, \\
\label{PS-approx2}
& \lim_{J\to J^*}\limsup_{n\to\infty} \||\nabla|^{\frac12}\bigl[(i\partial_t -\L)u_n^J + |u_n^J|^2 u_n^J\bigr]\|_{L_{t,x}^{\frac{10}{7}}} = 0,
\end{align}
where the space-time norms are taken over $\R\times\R^3$.
\end{lemma}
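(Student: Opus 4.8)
The plan is to reduce \eqref{PS-approx2} to a multilinear estimate for the cubic nonlinearity evaluated on the profile superposition and then to exploit the asymptotic orthogonality \eqref{orthogonal}. Writing $w_n^J:=e^{-it\L}r_n^J$, we have $u_n^J=\sum_{j=1}^J v_n^j+w_n^J$; since each $v_n^j$ solves \eqref{nls} and $w_n^J$ solves the free equation,
\[
(i\partial_t-\L)u_n^J+|u_n^J|^2u_n^J=|u_n^J|^2u_n^J-\sum_{j=1}^J|v_n^j|^2v_n^j,
\]
so \eqref{PS-approx2} becomes an estimate for this difference in $|\nabla|^{\frac12}L_{t,x}^{\frac{10}{7}}$. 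The $H_x^1$-bound in \eqref{PS-approx1} is immediate from \eqref{L:approx-data} and the boundedness of $\tilde u_n(0)$ in $H_x^1$ (its mass is $1$ and, as $E_a(\tilde u_n)\to\E_c<\E_a$, Proposition~\ref{P:coercive}a.(iii) bounds $\|\tilde u_n(0)\|_{\dot H_a^1}$). The remaining content is thus: (A) a bound $\sum_j\limsup_{n\to\infty}\bigl(\|v_n^j\|_{L_{t,x}^5}^5+\|v_n^j\|_{\dot S_a^{\frac12}(\R\times\R^3)}^2\bigr)\lesssim 1$ that is uniform in $J$; (B) the $L_{t,x}^5$-bound in \eqref{PS-approx1}; and (C) the error bound \eqref{PS-approx2} itself.

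For (A) I would split the profiles into a finite head $\{j\le J_0\}$ and a tail $\{j>J_0\}$. The head profiles obey $\|v_n^j\|_{S_a^1(\R)}\lesssim 1$: for those with $|x_n^j|\to\infty$ this is Theorem~\ref{T:embedding}, and for those with $x_n^j\equiv 0$ (for which $E_a(v_n^j)\le\E_c-\delta$ and \eqref{name2} holds) it follows from the definition of $\E_c$ and Remark~\ref{remark1}. For the tail, interpolating the $L_x^2$- and $\dot H_a^1$-decouplings in \eqref{decouple1} and applying Cauchy--Schwarz gives $\sum_j\limsup_{n\to\infty}\|\phi_n^j\|_{\dot H_a^{\frac12}}^2\lesssim 1$; choosing $J_0$ large enough that each tail profile lies below the small-data threshold of Theorem~\ref{T:LWP} then yields $\|v_n^j\|_{L_{t,x}^5}+\|v_n^j\|_{\dot S_a^{\frac12}}\lesssim\|\phi_n^j\|_{\dot H_a^{\frac12}}$ for $j>J_0$, whose squares are summable. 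Since $\|v_n^j\|_{L_{t,x}^5}\lesssim 1$ for every $j$, fifth powers are controlled by squares on the tail, which gives (A).

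For (B) and the cross-term part of (C), the mechanism is that for $j\ne k$ the compactly supported translates $\psi_\eta^j(\cdot-t_n^j,\cdot-x_n^j)$ and $\psi_\eta^k(\cdot-t_n^k,\cdot-x_n^k)$ from \eqref{PS-cc} have disjoint space-time supports once $n$ is large, by \eqref{orthogonal}. Hence, for each fixed finite $J$, replacing the head profiles by their approximants (at a cost of $O(\eta)$ in precisely the norms of \eqref{PS-cc}, which are the ones needed to estimate a cubic term in $|\nabla|^{\frac12}L_{t,x}^{\frac{10}{7}}$ via the fractional calculus estimates) makes every mixed product vanish for large $n$; the only surviving head contributions are the diagonal ones, bounded by (A), and the contributions involving a tail index are dominated by the smallness furnished by (A). Letting $n\to\infty$ and then $\eta\to 0$ (with $J_0$ fixed in advance) shows that $\bigl\|\sum_{j\le J}v_n^j\bigr\|_{L_{t,x}^5}\lesssim 1$ uniformly in $J$ and that the bracket $F\bigl(\sum_{j\le J}v_n^j\bigr)-\sum_{j\le J}F(v_n^j)$, where $F(z)=|z|^2z$, tends to zero in $|\nabla|^{\frac12}L_{t,x}^{\frac{10}{7}}$ for each fixed $J$; combined with $\|w_n^J\|_{L_{t,x}^5}\lesssim\|r_n^J\|_{\dot H_a^{\frac12}}\lesssim 1$ this gives \eqref{PS-approx1}. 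The same approximation also turns the almost-orthogonal sum $a:=\sum_{j\le J}v_n^j$ into a genuinely square-summable family, so that $\|a\|_{L_{t,x}^5}$ and $\|a\|_{\dot S_a^{\frac12}}$ are bounded uniformly in $J$. For the remaining piece $F(u_n^J)-F\bigl(\sum_{j\le J}v_n^j\bigr)$ of (C) I would then use a difference estimate of the schematic form
\[
\bigl\||\nabla|^{\tfrac12}\bigl(F(a+b)-F(a)\bigr)\bigr\|_{L_{t,x}^{\frac{10}{7}}}\lesssim\bigl(\|a\|_{L_{t,x}^5}+\|b\|_{L_{t,x}^5}\bigr)\Bigl(\|b\|_{L_{t,x}^5}\|a\|_{\dot S_a^{\frac12}}+\|a+b\|_{L_{t,x}^5}\|b\|_{\dot S_a^{\frac12}}\Bigr)
\]
with $b=w_n^J$, using $\|w_n^J\|_{L_{t,x}^5}\to 0$ as $J\to J^*$ from \eqref{rnJ} and $\|w_n^J\|_{\dot S_a^{\frac12}}\lesssim\|r_n^J\|_{\dot H_a^{\frac12}}\lesssim 1$; this piece vanishes as $J\to J^*$. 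Together the two brackets give \eqref{PS-approx2}.

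The main obstacle is the combinatorial bookkeeping of the cross terms rather than any single estimate: expanding $F\bigl(\sum_{j\le J}v_n^j\bigr)-\sum_j F(v_n^j)$ produces $O(J^3)$ monomials, so one cannot pass to the limit term by term but must first eliminate every interaction involving a profile from the $\ell^2$-small tail---via the small-data bounds and the square-summability in (A)---thereby reducing to finitely many genuinely orthogonal head interactions before invoking the disjoint-support cancellation, all while choosing, in each monomial, which factor carries the half-derivative so that the remaining factors supply the smallness. With that scheme in place, every individual bound is a routine application of Hölder's inequality, the Strichartz estimate, the fractional calculus estimates, and the equivalence of Sobolev spaces (Lemma~\ref{pro:equivsobolev}).
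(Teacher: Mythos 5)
Your overall architecture matches the paper's: the same decomposition of the error into the ``cross-term'' bracket $F\bigl(\sum_j v_n^j\bigr)-\sum_j F(v_n^j)$ and the ``remainder'' bracket $F(u_n^J)-F\bigl(u_n^J-e^{-it\L}r_n^J\bigr)$, the same head/tail splitting with small-data theory to get square-summability of the $\|v_n^j\|$, and the same use of the compact approximants \eqref{PS-cc} plus \eqref{orthogonal} to kill the off-diagonal interactions (the paper packages this as Lemma~\ref{L:orthogonal}). Parts (A) and (B) and the cross-term part of (C) are sound as sketched.

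There is, however, a genuine gap in your treatment of the remainder bracket. Your displayed difference estimate contains the term $\bigl(\|a\|_{L_{t,x}^5}+\|b\|_{L_{t,x}^5}\bigr)\,\|a+b\|_{L_{t,x}^5}\,\|b\|_{\dot S_a^{1/2}}$ with $b=e^{-it\L}r_n^J$, and every factor in it is merely $O(1)$: you only know $\|b\|_{L_{t,x}^5}\to 0$, while $\|b\|_{\dot S_a^{1/2}}\lesssim\|r_n^J\|_{\dot H^{1/2}}\lesssim 1$ does not decay (the remainders $r_n^J$ need not vanish in $\dot H^{1/2}$). So the right-hand side of your estimate does not tend to zero as $J\to J^*$, and the conclusion ``this piece vanishes'' does not follow. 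Nor can you evade this by ``choosing which factor carries the half-derivative'': every monomial of $F(a+b)-F(a)$ contains at least one $b$, and the fractional Leibniz rule, however you group the factors, always produces the contribution $|\nabla|^{1/2}b\cdot a\cdot a$, which is only bounded. The correct device --- and the reason the paper works at the regularity $\tfrac{31}{60}>\tfrac12$ and supplies the approximants \eqref{PS-cc} in $L_t^{30/7}\dot H_a^{31/60,90/31}$ --- is interpolation: prove that the remainder bracket is \emph{small} in $L_{t,x}^{10/7}$ with \emph{no} derivatives (there every monomial carries an underivatived $b$, so $\|b\|_{L_{t,x}^5}\to 0$ suffices), prove that it is uniformly \emph{bounded} in $|\nabla|^{31/60}L_{t,x}^{10/7}$, and interpolate to obtain smallness at regularity $\tfrac12$. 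With that modification (which you should also make explicit when estimating the replacement errors $v_n^j-\psi_\eta^j$ in the cross-term bracket, since \eqref{PS-cc} gives you control at levels $0$ and $\tfrac{31}{60}$, not $\tfrac12$), your argument closes and coincides with the paper's.
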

Using Lemma~\ref{L:approx} and \eqref{L:approx-data}, Theorem~\ref{T:stab} implies that the solutions $\tilde u_n$ inherit the uniform $L_{t,x}^5$ space-time bounds of the $u_n^J$ for large $n$, contradicting \eqref{PS-blowup}.  Thus, it remains to establish Lemma~\ref{L:approx}.  For this, we exploit the asymptotic orthogonality of the parameters $(t_n^j,x_n^j)$ in \eqref{orthogonal}.

\begin{lemma}[Orthogonality]\label{L:orthogonal} For any $j\neq k$, we have
\begin{align*}
 \|v_n^jv_n^k\|_{L_{t,x}^{\frac52}}+\|v_n^jv_n^k\|_{L_{t,x}^{\frac53}}+ \|v_n^j v_n^k\|_{L_t^{\frac{15}{7}} L_x^{\frac{45}{31}}}+\|(\L)^{\frac{31}{120}} v_n^j (\L)^{\frac{31}{120}} v_n^k\|_{L_t^{\frac{15}{7}} L_x^{\frac{45}{31}}}\to 0
\end{align*}
as $n\to\infty$.
\end{lemma}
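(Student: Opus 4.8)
The plan is to bound each of the four bilinear quantities by a product of two single-function Strichartz-type norms via H\"older, and then exploit that --- up to arbitrarily small errors --- each nonlinear profile $v_n^j$ is a space-time translate of a \emph{fixed} compactly supported function, with translation parameters drifting apart.  Splitting exponents as $\tfrac{1}{5/2}=\tfrac15+\tfrac15$, $\tfrac{1}{5/3}=\tfrac{1}{10/3}+\tfrac{1}{10/3}$, and $\tfrac{7}{15}=\tfrac{7}{30}+\tfrac{7}{30}$ together with $\tfrac{31}{45}=\tfrac{31}{90}+\tfrac{31}{90}$, H\"older's inequality gives
\begin{align*}
\|v_n^j v_n^k\|_{L_{t,x}^{5/2}}&\leq \|v_n^j\|_{L_{t,x}^5}\,\|v_n^k\|_{L_{t,x}^5}, \\
\|v_n^j v_n^k\|_{L_{t,x}^{5/3}}&\leq \|v_n^j\|_{L_{t,x}^{10/3}}\,\|v_n^k\|_{L_{t,x}^{10/3}}, \\
\|v_n^j v_n^k\|_{L_t^{15/7}L_x^{45/31}}&\leq \|v_n^j\|_{L_t^{30/7}L_x^{90/31}}\,\|v_n^k\|_{L_t^{30/7}L_x^{90/31}}, \\
\bigl\|(\L)^{\frac{31}{120}} v_n^j\,(\L)^{\frac{31}{120}} v_n^k\bigr\|_{L_t^{15/7}L_x^{45/31}}&\leq \bigl\|(\L)^{\frac{31}{120}}v_n^j\bigr\|_{L_t^{30/7}L_x^{90/31}}\,\bigl\|(\L)^{\frac{31}{120}}v_n^k\bigr\|_{L_t^{30/7}L_x^{90/31}},
\end{align*}
so that, writing $Y$ for whichever of $L_{t,x}^5$, $L_{t,x}^{10/3}$, $L_t^{30/7}L_x^{90/31}$, $L_t^{30/7}\dot H_a^{31/60,90/31}$ is relevant, each quantity is controlled by $\|v_n^j\|_Y\|v_n^k\|_Y$ after a purely bilinear expansion.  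Now fix $\eta>0$ and set $\Psi_n^j:=\psi_\eta^j(\cdot-t_n^j,\cdot-x_n^j)$, where $\psi_\eta^j\in C_c^\infty(\R\times\R^3)$ is the approximant furnished by \eqref{PS-cc} (valid for every $j$, using Theorem~\ref{T:embedding} for those $j$ with $|x_n^j|\to\infty$), so that $\|v_n^j-\Psi_n^j\|_Y<\eta$ for $n$ large.  Combining this with the uniform bounds $\|v_n^j\|_Y\lesssim 1$, every term in the bilinear expansion in which at least one factor is $v_n^\ell-\Psi_n^\ell$ (resp.\ $(\L)^{31/120}(v_n^\ell-\Psi_n^\ell)$) is $O(\eta)$.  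Hence it suffices to show that the ``main term'' --- obtained by replacing $v_n^j,v_n^k$ by $\Psi_n^j,\Psi_n^k$, and applying $(\L)^{31/120}$ to each factor in the last quantity --- tends to $0$ as $n\to\infty$; then let $\eta\to0$.

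For the first three quantities this is immediate: $\psi_\eta^j$ and $\psi_\eta^k$ are supported in fixed compact subsets of $\R\times\R^3$, so by \eqref{orthogonal} the supports of $\Psi_n^j$ and $\Psi_n^k$ are disjoint for $n$ large, and $\Psi_n^j\Psi_n^k\equiv0$.  The one extra wrinkle is the term with $(\L)^{31/120}$, since this operator does not commute with spatial translations and so $(\L)^{31/120}\Psi_n^j$ need not be a translate of a fixed function.  When $x_n^j\equiv 0$ there is nothing to do: $(\L)^{31/120}\Psi_n^j=[(\L)^{31/120}\psi_\eta^j](\cdot-t_n^j,\cdot)$, and since the exponents $(s,p)=(\tfrac{31}{60},\tfrac{90}{31})$ satisfy the hypotheses of Lemma~\ref{pro:equivsobolev} (so $(\L)^{31/120}\psi_\eta^j$ is comparable to $|\nabla|^{31/60}\psi_\eta^j$ in $L_x^{90/31}$, uniformly in the compactly-supported time variable), the fixed function $(\L)^{31/120}\psi_\eta^j$ lies in $L_t^{30/7}L_x^{90/31}$.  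When $|x_n^j|\to\infty$, I would use the identity $\L[\phi(\cdot-x_n^j)]=[\L^{n_j}\phi](\cdot-x_n^j)$ from Definition~\ref{D:ops} (and functional calculus) to write $(\L)^{31/120}\Psi_n^j=[(\L^{n_j})^{31/120}\psi_\eta^j](\cdot-t_n^j,\cdot-x_n^j)$, and then invoke the convergence $(\L^{n_j})^{31/120}\psi_\eta^j\to(-\Delta)^{31/120}\psi_\eta^j$ in $L_t^{30/7}L_x^{90/31}$ --- which holds because the potential $a|x+x_n^j|^{-2}$ tends to $0$ uniformly on the fixed compact support of $\psi_\eta^j$, a fractional-regularity $L^p$ analogue of \eqref{coo4} in Lemma~\ref{L:coo}, provable by the same heat-kernel-based methods --- so that $(\L)^{31/120}\Psi_n^j$ equals a space-time translate of the fixed function $(-\Delta)^{31/120}\psi_\eta^j\in L_t^{30/7}L_x^{90/31}$ up to an error vanishing in $L_t^{30/7}L_x^{90/31}$ as $n\to\infty$.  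In both sub-cases, approximating the relevant fixed function in $L_t^{30/7}L_x^{90/31}$ by a $C_c^\infty(\R\times\R^3)$ function up to $\eta$ and then reusing the disjoint-support argument (since the translation parameters still drift apart by \eqref{orthogonal}) reduces the main term to $O(\eta)$; letting $\eta\to0$ completes the proof.

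The main obstacle is precisely this last point: handling the non-commutation of $(\L)^{31/120}$ with spatial translations for the profiles escaping to spatial infinity, which is what forces the auxiliary operator convergence $(\L^{n})^{31/120}\psi\to(-\Delta)^{31/120}\psi$ in $L_x^{90/31}$.  Once that is in hand, everything else is routine: H\"older, the approximations from Theorem~\ref{T:embedding}, density of $C_c^\infty$, and bookkeeping with asymptotically disjoint supports.
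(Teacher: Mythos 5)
Your proposal is correct and is essentially the paper's own argument, which the paper only sketches in two sentences: expand each bilinear quantity around the compactly supported approximants furnished by \eqref{PS-cc}, absorb every term containing an error factor into $O(\eta)$ via H\"older and the uniform bounds, and kill the main term using the asymptotic disjointness of supports coming from \eqref{orthogonal}. The only place you go beyond the paper is the careful treatment of $(\L)^{\frac{31}{120}}$ versus spatial translation for profiles with $|x_n^j|\to\infty$; your auxiliary claim that $(\L^{n})^{\frac{31}{120}}\psi\to(-\Delta)^{\frac{31}{120}}\psi$ in $L_x^{\frac{90}{31}}$ is true and provable with the heat-kernel/multiplier machinery of \cite{KMVZZ1}, though the justification you offer (smallness of the potential on $\supp\psi$) is only heuristic, since the fractional power is nonlocal.
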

\begin{proof}  If we knew that each $v_n^j(t,x)$ were of the form $\psi^j(t-t_n^j,x-x_n^j)$ for some $\psi^j\in C_c^\infty(\R\times\R^3)$, then the result would follow directly from a change of variables and \eqref{orthogonal}. In fact, \eqref{PS-cc} tells us that we may estimate each $v_n^j$ by such a function (in suitable spaces) up to arbitrarily small errors. Using this together with the uniform bounds on the $v_n^j$, the result quickly follows. \end{proof}

\begin{proof}[Proof of Lemma~\ref{L:approx}] First, using \eqref{PS-belowQ} and \eqref{L:approx-data}, we deduce the $H_x^1$-bound in \eqref{PS-approx1}.
From this bound and the decoupling \eqref{decouple1}, we deduce that
\[
\limsup_{n\to\infty}\sum_{j=1}^J \|\phi_n^j\|_{H_x^1}^2 \lesssim 1\qtq{uniformly in }J.
\]
In fact, in view of \eqref{iso}, \eqref{onebub-conv}, and the definition of profiles, this implies
\[
\sum_{j=1}^\infty \|\phi^j\|_{H_x^1}^2 < \infty
\]

Letting $\eta_0>0$ be the small-data threshold of Theorem~\ref{T:LWP} and using \eqref{small-data-bds}, there exists $J_0=J_0(\eta_0)$ such that
\[
\sup_J \limsup_{n\to\infty} \sum_{j=J_0}^J \|v_n^j\|_{S_a^1(\R)}^2 \lesssim \limsup_{n\to\infty}\sum_{j\geq J_0} \|\phi^j\|_{H^1}^2 <\eta_0.
\]
Thus, we deduce that
\begin{equation}\label{PS-summable}
\limsup_{n\to\infty} \sum_{j=1}^J \| v_n^j\|_{S_a^1(\R)}^2 \lesssim 1\qtq{uniformly in}J.
\end{equation}

Next, using Lemma~\ref{L:orthogonal}, Sobolev embedding, and equivalence of Sobolev spaces,
\begin{align*}
\biggl|\ \biggl\|\sum_{j=1}^J v_n^j\biggr\|_{L_{t,x}^5}^5 - \sum_{j=1}^J \|v_n^j\|_{L_{t,x}^5}^5\biggr| &
\lesssim_J \sum_{j\neq k} \|v_n^j\|_{L_{t,x}^5}^3 \|v_n^j v_n^k\|_{L_{t,x}^{\frac52}}\to 0\qtq{as}n\to\infty.
\end{align*}
As $\|e^{-it\L}w_n^J\|_{L_{t,x}^5}\lesssim 1$ uniformly, we may deduce  the $L_{t,x}^5$ bound in \eqref{PS-approx1} from \eqref{PS-summable}.  A similar argument yields
\begin{equation}\label{unJ-sym}
\limsup_{n\to\infty} \|u_n^J\|_{L_{t,x}^{\frac{10}{3}}} \lesssim 1 \qtq{uniformly in}J,
\end{equation}
which will be useful below.

Next, arguing as above, we have for $s\in\{0,\frac{31}{60}\}$,
\begin{align*}
\biggl\|\sum_{j=1}^J v_n^j \biggr\|_{L_t^{\frac{30}{7}} \dot H_a^{s,\frac{90}{31}}}^2 & \lesssim \biggl\| \biggl(\sum_{j=1}^J (\L)^{\frac{s}{2}} v_n^j\biggr)^2\biggr\|_{L_t^{\frac{15}{7}}L_x^{\frac{45}{31}}} \\
& \lesssim \sum_{j=1}^J \|v_n^j\|_{L_t^{\frac{30}{7}} \dot H_a^{s,\frac{90}{31}}}^2 + C_J \sum_{j\neq k} \|(\L)^{\frac{s}{2}}v_n^j(\L)^{\frac{s}{2}} v_n^k\|_{L_t^{\frac{15}{7}} L_x^{\frac{45}{31}}}.
\end{align*}
Using this, Lemma~\ref{L:orthogonal}, and \eqref{PS-summable}, we deduce
\begin{equation}\label{PS-unJ}
\limsup_{n\to\infty} \| u_n^J\|_{L_t^{\frac{30}{7}} H_a^{\frac{31}{60},\frac{90}{31}}} \lesssim 1\qtq{uniformly in }J.
\end{equation}

We are now ready to show \eqref{PS-approx2}. Denoting $F(z) = -|z|^2 z$, we write
\begin{align}\label{enj1}
e_n^J:=(i\partial_t-\L)u_n^J -F(u_n^J) & =  \sum_{j=1}^J F(v_n^j) - F\bigl(\sum_{j=1}^J v_n^j\bigr) \\
\label{enj2}
& \quad + F(u_n^J-e^{-it\L}r_n^J) - F(u_n^J).
\end{align}

We first estimate \eqref{enj1}. Using the fractional product rule, Sobolev embedding, and equivalence of Sobolev spaces, we find
\begin{align*}
\limsup_{n\to\infty}&\| |\nabla|^{\frac{31}{60}} \eqref{enj1}\|_{L_{t,x}^{\frac{10}{7}}}\\
& \lesssim_J \limsup_{n\to\infty} \sum_{j,k,\ell} \| |\nabla|^{\frac{31}{60}}(v_n^j v_n^k v_n^\ell)\|_{L_{t,x}^{\frac{10}{7}}} \\
&\lesssim_J \limsup_{n\to\infty}\sum_{j,k,\ell} \|v_n^j\|_{L_t^{\frac{30}{7}}L_x^{\frac{45}{8}}} \|v_n^k\|_{L_t^{\frac{30}{7}}L_x^{\frac{45}{8}}}\||\nabla|^{\frac{31}{60}} v_n^\ell\|_{L_t^{\frac{30}{7}} L_x^{\frac{90}{31}}} \\
&\lesssim_J \limsup_{n\to\infty}\sum_{j,k,\ell} \| |\nabla|^{\frac12} v_n^j\|_{L_t^{\frac{30}{7}} L_x^{\frac{90}{31}}}\| |\nabla|^{\frac12} v_n^k\|_{L_t^{\frac{30}{7}} L_x^{\frac{90}{31}}}\||\nabla|^{\frac{31}{60}} v_n^\ell\|_{L_t^{\frac{30}{7}} L_x^{\frac{90}{31}}} \lesssim_J 1.
\end{align*}
On the other hand, using Lemma~\ref{L:orthogonal}, we have that
\[
\limsup_{n\to\infty} \|\eqref{enj1}\|_{L_{t,x}^{\frac{10}{7}}}\lesssim_J \limsup_{n\to\infty} \sum_{j\neq k} \|v_n^j v_n^k\|_{L_{t,x}^{\frac52}}\|v_n^j\|_{L_{t,x}^{\frac{10}{3}}} = 0
\]
for all $J$. Thus, by interpolation, we have that
\[
\lim_{J\to J^*} \limsup_{n\to\infty} \||\nabla|^{\frac12}\eqref{enj1}\|_{L_{t,x}^{\frac{10}{7}}}  = 0.
\]

Next, we estimate \eqref{enj2}.  Using \eqref{PS-unJ} and the same spaces as above,
\begin{align*}
\limsup_{n\to\infty}\| |\nabla|^{\frac{31}{60}} F(u_n^J)\|_{L_{t,x}^{\frac{10}{7}}} &\lesssim \limsup_{n\to\infty} \| u_n^J \|_{L_t^{\frac{30}{7}}L_x^{\frac{45}{8}}}^2 \||\nabla|^{\frac{31}{60}} u_n^J \|_{L_t^{\frac{30}{7}} L_x^{\frac{90}{31}}}\\
& \lesssim 1\qtq{uniformly in}J.
\end{align*}
As $ e^{-it\L}r_n^J\in S_a^1(\R)$, this estimate suffices to show that
\[
\limsup_{n\to\infty} \||\nabla|^{\frac{31}{60}}\eqref{enj2}\|_{L_{t,x}^{\frac{10}{7}}} \lesssim 1 \qtq{uniformly in}J.
\]
On the other hand, using Strichartz, \eqref{unJ-sym}, \eqref{PS-approx1}, and \eqref{rnJ}, we can bound
\begin{align*}
&\lim_{J\to J^*}\limsup_{n\to\infty} \|\eqref{enj2}\|_{L_{t,x}^{\frac{10}{7}}} \\
&\quad \lesssim \lim_{J\to J^*}\limsup_{n\to\infty} \|e^{-it\L}r_n^J\|_{L_{t,x}^5} \bigl(\|u_n^J\|_{L_{t,x}^5}+\|r_n^J\|_{\dot H^{\frac12}}\bigr)\bigl(\|u_n^J\|_{L_{t,x}^{\frac{10}{3}}}+\|r_n^J\|_{L_x^2}\bigr) = 0.
\end{align*}
Thus, by interpolation, we have
\[
\lim_{J\to J^*}\limsup_{n\to\infty} \||\nabla|^{\frac12}\eqref{enj2}\|_{L_{t,x}^{\frac{10}{7}}} = 0.
\]
We conclude that \eqref{PS-approx2} holds, which completes the proof of Lemma~\ref{L:approx}.
\end{proof}
As described above, Lemma~\ref{L:approx} together with Theorem~\ref{T:stab} yields a contradiction to \eqref{PS-blowup}. This rules out the scenario \eqref{PS-multibubble} and hence completes the proof of Proposition~\ref{P:PS}. \end{proof}

\section{Preclusion of minimal blowup solutions}\label{S:not-exist}

In this section, we rule out the existence of solutions as in Theorem~\ref{T:exist}, thus completing the proof of Theorem~\ref{T:main}(i).

\begin{theorem}[Preclusion of minimal blowup solutions]\label{T:not-exist} There are no solutions to \eqref{nls} as in Theorem~\ref{T:exist}.
\end{theorem}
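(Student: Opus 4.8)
The plan is a virial/convexity argument driven by the coercivity below the threshold together with the precompactness of the orbit. Suppose $v$ were a solution as in Theorem~\ref{T:exist}: $M(v)=1$, $E_a(v)=\E_c\in(0,\E_a)$, $\|v(0)\|_{\dot H_a^1}<\K_a$, $v$ global, and $K:=\{v(t):t\in\R\}$ precompact in $H_x^1(\R^3)$. Since $M(v)E_a(v)=\E_c<\E_a$ and $\|v(0)\|_{L_x^2}\|v(0)\|_{\dot H_a^1}=\|v(0)\|_{\dot H_a^1}<\K_a$, Proposition~\ref{P:coercive}a applies: with the constant $c>0$ there, $\|v(t)\|_{\dot H_a^1}^2-\tfrac34\|v(t)\|_{L_x^4}^4\ge c\|v(t)\|_{\dot H_a^1}^2$ for all $t$, while the upper bound $E_a(v)\le\tfrac12\|v(t)\|_{\dot H_a^1}^2$ forces $\|v(t)\|_{\dot H_a^1}^2\ge2\E_c$. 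Hence $\|v(t)\|_{\dot H_a^1}^2-\tfrac34\|v(t)\|_{L_x^4}^4\ge2c\,\E_c>0$ uniformly in $t$.

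Next I would apply the truncated virial identity with $w_R(x)=R^2\phi(x/R)$ from Section~\ref{S:virial} and $V_R(t):=\int w_R(x)|v(t,x)|^2\,dx$. By Lemma~\ref{L:virial}, $\partial_{tt}V_R(t)=8\bigl[\|v(t)\|_{\dot H_a^1}^2-\tfrac34\|v(t)\|_{L_x^4}^4\bigr]+\mathcal E_R(t)$, where the error term obeys $|\mathcal E_R(t)|\lesssim\int_{|x|>R}\bigl(|\nabla v(t)|^2+|\sqrt{\L}\,v(t)|^2+R^{-2}|v(t)|^2+|v(t)|^4\bigr)\,dx$, using $|\partial_{jk}w_R|\le2$ and the bound $\tfrac{|ax|}{|x|^4}|\nabla w_R|\lesssim|a|R^{-2}$ on $\{|x|>R\}$. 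Now I invoke precompactness: a precompact subset of $H_x^1$ is uniformly tight, so $\sup_t\|v(t)\|_{H_x^1(\{|x|>R\})}\to0$ as $R\to\infty$; likewise, since $\sqrt{\L}$ is an isometry of $\dot H_a^1$ onto $L_x^2$ and $\dot H_a^1$, $\dot H_x^1$ carry equivalent norms, the set $\{\sqrt{\L}\,v(t):t\in\R\}$ is precompact in $L_x^2$ and hence tight; and $\int_{|x|>R}|v(t)|^4\lesssim\|v(t)\|_{L_x^2(\{|x|>R\})}\|v(t)\|_{\dot H_x^1}^3\to0$ uniformly. Thus $\sup_t|\mathcal E_R(t)|\to0$ as $R\to\infty$, and choosing $R$ with $\sup_t|\mathcal E_R(t)|\le8c\,\E_c$ yields $\partial_{tt}V_R(t)\ge16c\,\E_c-8c\,\E_c=:\delta_0>0$ for all $t\in\R$.

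Finally I would integrate once to reach a contradiction: $\partial_t V_R(t)=2\Im\int\bar v\,\nabla v\cdot\nabla w_R\,dx$ satisfies $|\partial_t V_R(t)|\lesssim R\,\|v(t)\|_{L_x^2}\|\nabla v(t)\|_{L_x^2}\lesssim R$ uniformly in $t$; yet $\partial_t V_R(T)=\partial_t V_R(0)+\int_0^T\partial_{tt}V_R(t)\,dt\ge\partial_t V_R(0)+\delta_0T\to\infty$ as $T\to\infty$, which is absurd. Hence no solution as in Theorem~\ref{T:exist} can exist, completing the proof of Theorem~\ref{T:main}(i). The only subtle point is the uniform-in-$t$ control of $\mathcal E_R(t)$, in particular of the inverse-square contributions $\int_{|x|>R}|\sqrt{\L}\,v(t)|^2\,dx$ and $\int_{|x|>R}|v(t)|^2\tfrac{ax}{|x|^4}\cdot\nabla w_R\,dx$; this is exactly what precompactness of the orbit delivers, via tightness of precompact families in $H_x^1$ and $L_x^2$ together with the crude estimate $|x|^{-2}\le R^{-2}$ on $\{|x|>R\}$.
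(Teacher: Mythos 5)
Your proposal is correct and follows essentially the same route as the paper: coercivity from Proposition~\ref{P:coercive}a.(ii) gives a uniform positive lower bound on $\|v(t)\|_{\dot H_a^1}^2-\tfrac34\|v(t)\|_{L_x^4}^4$, the truncated virial identity plus tightness of the precompact orbit makes the boundary errors uniformly small, and the uniform bound $|\partial_t V_R|\lesssim R$ against $\partial_{tt}V_R\geq\delta_0>0$ yields the contradiction. Your treatment is, if anything, slightly more explicit than the paper's at one point: you justify the smallness of the nonlocal term $\int_{|x|>R}|\sqrt{\L}\,v(t)|^2\,dx$ via precompactness of $\{\sqrt{\L}\,v(t)\}$ in $L_x^2$, whereas the paper's tightness statement only lists $|u|^2+|\nabla u|^2+|u|^4$.
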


\begin{proof} Suppose towards a contradiction that there exists a solution $v$ as in Theorem~\ref{T:exist}.  Let $\delta>0$ such that $\E_c\leq (1-\delta)\E_a$, and take $\eta>0$ to be determined later. By precompactness in $H_x^1$, there exists $R=R(\eta)>1$ such that
\begin{equation}\label{E:tight}
\int_{|x|>R} |u(t,x)|^2 + |\nabla u(t,x)|^2 + |u(t,x)|^4 \,dx < \eta \qtq{uniformly for}t\in\R.
\end{equation}

Note that we also get a uniform \emph{lower} bound on the $H_x^1$-norm of $u(t)$ by compactness and the fact that the solution is not identically zero (indeed, the solution has infinite $L_{t,x}^5$-norm).  Thus, by Proposition~\ref{P:coercive}a.(ii), we have
\begin{equation}\label{E:lb}
\|u(t)\|_{\dot H_a^1}^2 - \tfrac34 \|u(t)\|_{L_x^4}^4 \geq c \|u(t)\|_{\dot H_a^1}^2\gtrsim_u c\qtq{uniformly for}t\in\R
\end{equation}
for some $c=c(\delta,a)>0$.

Define $w_R$ as in Section~\ref{S:virial}.  Choosing $\eta=\eta(u,c)$ sufficiently small, Lemma~\ref{L:virial}, \eqref{E:tight}, and \eqref{E:lb} imply
\begin{align}\label{no name2}
c \lesssim_u \partial_{tt}\int_{\R^3} w_R(x) |u(t,x)|^2 \,dx \qtq{uniformly for} t\in\R.
\end{align}

Using \eqref{virial} and noting that
\[
\biggl|\partial_t \int_{\R^3} w_R(x) |u(t,x)|^2\,dx\biggr| \lesssim R\|u\|_{L_t^\infty H_x^1}^2 \lesssim_u R \qtq{uniformly for}t\in \R,
\]
we can integrate \eqref{no name2} over any interval of the form $[0,T]$ and use the fundamental theorem of calculus to deduce that $cT \lesssim_u R$. Choosing $T$ sufficiently large now yields a contradiction.\end{proof}

\section{Proof of Theorems~\ref{T:threshold}~and~\ref{T:radial}}\label{S:threshold}

In this section, we prove Theorems~\ref{T:threshold}~and~\ref{T:radial}.

For Theorem~\ref{T:threshold}, we prove that for $a>0$, we have failure of uniform space-time bounds at the threshold. Arguing as in \cite{KVZ}, we choose a sequence of solutions with data  equal to translates the ground state for \eqref{nls0}.

\begin{proof}[Proof of Theorem~\ref{T:threshold}] Let
\[
\phi_n(x) := (1-\eps_n) Q_0 (x-x_n),
\]
where $Q_0$ is the ground state for \eqref{nls0}, $\eps_n\to 0$, and $|x_n|\to\infty$.  Using \eqref{poho} and \eqref{coo4}, we can deduce that
\[
M(\phi_n)E_a(\phi_n)\nearrow \E_a\qtq{and} \|\phi_n\|_{L_x^2} \|\phi_n\|_{\dot H_a^1} \nearrow K_a.
\]
Thus, by Theorem~\ref{T:main}, there exist global solutions $u_n$ to \eqref{nls} with $u_n(0)=\phi_n$.

We now define
\[
\tilde u_n(t,x) = (1-\eps_n) e^{it}[\chi_n Q_0](x-x_n),
\]
where $\chi_n$ is as in \eqref{chin}. Note that
\begin{align*}
\|\tilde u_n(0) - u_n(0) \|_{\dot H_x^{\frac12}} = \big\|[(1-\eps_n)\chi_n - 1]Q_0\big\|_{\dot H_x^{\frac12}} \to 0\qtq{as}n\to\infty.
\end{align*}
Note also that
\[
\|\tilde u_n\|_{L_{t,x}^5([-T,T]\times\R^3)} \gtrsim_{Q_0} T\qtq{for any}T>0.
\]

Using the equation $-\Delta Q_0+Q_0-Q_0^3=0$, we can compute
\begin{align}
\nonumber
e_n & := (i\partial_t-\L)\tilde u_n + |\tilde u_n|^2 \tilde u_n \\
\label{thres-e1}
& = e^{it}\bigl[(1-\eps_n)^3\chi_n^3(x-x_n)-(1-\eps_n)\chi_n(x-x_n)\bigr]Q_0^3(x-x_n) \\
\label{thres-e2}
&\quad + (1-\eps_n)e^{it}[Q_0\Delta \chi_n + 2\nabla \chi_n \cdot \nabla Q_0](x-x_n) \\
\label{thres-e3}
&\quad - \tfrac{a}{|x|^2}(1-\eps_n)e^{it}[\chi_n Q_0](x-x_n).
\end{align}
We now claim that for any fixed $T>0$,
\begin{equation}\label{equ:error}
\| |\nabla|^{\frac12} e_n \|_{N([-T,T])} \to 0 \qtq{as} n\to\infty.
\end{equation}

To see this, first recall that $Q_0$ is Schwartz (cf. \cite[Proposition~B.7]{Tao}).  Thus, taking space-time norms over $[-T,T]\times\R^3$, we can first use Sobolev embedding and dominated convergence to estimate
\begin{align*}
&\|\nabla \eqref{thres-e1}\|_{L_t^1L_x^2}\lesssim T  \big[\|\nabla \chi_n\|_{L_x^3}\|Q_0\|_{L_x^8}^3+\|\nabla Q_0\|_{L_x^2}
	\|Q_0\|_{L_x^\infty}^2\big]\lesssim T, \\
& \|\eqref{thres-e1}\|_{L_t^1L_x^2}\lesssim T\|Q_0\|_{L_x^\infty}^2\big\|\bigl[(1-\eps_n)^3\chi_n^3(x-x_n)-(1-\eps_n)
	\chi_n(x-x_n) \bigr]Q_0 \big\|_{L_x^2}\to0
\end{align*}
as $n\to\infty$.  Next,
\begin{align*}
\big\|\nabla \eqref{thres-e2}\big\|_{L_t^1L_x^2}&\lesssim T\bigl\{\|\nabla\Delta \chi_n\|_{L_x^\infty} + \|\Delta\chi_n\|_{L_x^\infty}+ \|\nabla\chi_n\|_{L_x^\infty} \bigr\}\|Q_0\|_{H_x^2} \\
&\lesssim T\bigl\{ |x_n|^{-3}+|x_n|^{-2}+ |x_n|^{-1} \bigr\} \to 0 \qtq{as}n\to\infty, \\
 \|\eqref{thres-e2}\|_{L_t^1 L_x^2} & \lesssim T\bigl\{ \|\Delta\chi_n\|_{L_x^\infty} + \|\nabla \chi_n \|_{L_x^\infty}\bigr\} \| Q_0\|_{ H_x^1}  \\
&\lesssim T\bigl\{ |x_n|^{-2} + |x_n|^{-1} \}\to 0 \qtq{as}n\to\infty.
\end{align*}
Finally, we have
\begin{align*}
\|\langle\nabla\rangle \eqref{thres-e3} \|_{L_t^1 L_x^2}&\lesssim T\bigl\{ \|\tfrac{\chi_n}{|\cdot + x_n|^2}\|_{L_x^\infty}\|\langle\nabla\rangle Q_0\|_{ L_x^2}  + \|\nabla\bigl(\tfrac{\chi_n}{|\cdot + x_n|^2}\bigr)\|_{L_x^\infty}\|Q_0\|_{ L_x^2} \bigr\} \\
&\lesssim T\bigl\{ |x_n|^{-2} + |x_n|^{-3} \}\to 0\qtq{as}n\to\infty.
\end{align*}
Interpolation now gives \eqref{equ:error}.

An application of Theorem~\ref{T:stab} now implies
\[
\| u_n \|_{L_{t,x}^5([-T,T]\times\R^3)} \gtrsim_{Q_0} T.
\]
As $T>0$ was arbitrary, this implies the result. \end{proof}

We next sketch a proof of Theorem~\ref{T:radial}, which considers radial solutions for $a>0$.  Most of the arguments carry over directly from the proof of Theorem~\ref{T:main}, and hence we focus only on the points where the arguments change.

\begin{proof}[Proof of Theorem~\ref{T:radial}] Fix $a>0$.  We construct (non-zero) radial optimizers to the Gagliardo--Nirenberg inequality (restricted to radial functions) satisfying the elliptic equation and Pohozaev identities just as in the proof of Theorem~\ref{T:GN}.  The restriction to radial functions guarantees the requisite compactness via the radial Sobolev embedding. The analogue of Proposition~\ref{P:coercive} (coercivity) follows immediately.  Part (ii) of Theorem~\ref{T:radial} (blowup above the threshold) then follows as in Section~\ref{S:blowup}.

For part (i) of Theorem~\ref{T:radial} (scattering result below the threshold), nearly all of the arguments carry over directly---there is only one delicate point.  Specifically, in the proof of Proposition~\ref{P:PS} (the Palais--Smale condition), we can no longer apply Theorem~\ref{T:embedding} to construct solutions to \eqref{nls} corresponding to profiles living far from the origin. Indeed, because $\E_{a,\text{rad}}>\E_a=\E_0$ and $\K_{a,\text{rad}}>\K_a=\K_0$, these profiles are no longer necessarily below the thresholds for \eqref{nls0}, which would be necessary to invoke the results of \cite{DHR,HR}.  Fortunately, we can show that in the inverse Strichartz inequality, for sequences of radial functions, we may always take $x_n\equiv 0$ (see below). Hence, we never need to apply Theorem~\ref{T:embedding} in order to prove the Palais--Smale condition.

We argue as follows: Recalling the proof of Proposition~\ref{P:IS}, we see that we may take $x_n\equiv 0$ provided we can establish a uniform upper bound for the parameters $x_n$. Recalling that proof and the notation therein, we see that it suffices to prove that
\begin{equation}\label{radial-is}
\|e^{-it\L} f_n \|_{L_{t,x}^5(\R\times\{|x|>\rho\})} \ll \eps(\tfrac{\eps}{A})^{\frac32}\qtq{provided} \rho = C(\tfrac{A}{\eps})^{\frac{15}2}
\end{equation}
for some large constant $C$. Indeed, this implies an upper bound on the $|x_n|$.

To prove \eqref{radial-is}, we use Strichartz, radial Sobolev embedding, and equivalence of Sobolev spaces:
\begin{align*}
\|e^{-it\L} f_n\|_{L_{t,x}^5(\R\times\{|x|>\rho\})} & \lesssim \|e^{-it\L}f_n\|_{L_{t,x}^{\frac{10}{3}}}^{\frac23}
    \|e^{-it\L} f_n \|_{L_{t,x}^\infty(\R\times\{|x|>\rho\})}^{\frac13} \\
& \lesssim \rho^{-\frac13} A^{\frac23} \| |x| e^{-it\L} f_n\|_{L_{t,x}^\infty}^{\frac13} \\
& \lesssim  \rho^{-\frac13} A^{\frac23} \| e^{-it\L}f_n\|_{L_t^\infty L_x^2}^{\frac16} \| e^{-it\L}f_n\|_{L_t^\infty \dot H_a^1}^{\frac16}\lesssim \rho^{-\frac13}A.
\end{align*}
This gives \eqref{radial-is}, which completes the proof of Theorem~\ref{T:radial}.\end{proof}


\begin{thebibliography}{100}

\bibitem{Bourg} J. Bourgain, \emph{Global well-posedness of defocusing 3D critical NLS in the radial case.} J. Amer. Math. Soc. \textbf{12} (1999), 145--171.

\bibitem{BL} H. Br\'ezis and E. Lieb, \emph{A relation between pointwise convergence of functions and convergence of functionals.} Proc. Amer. Math. Soc. \textbf{88} (1983), 486--490.

\bibitem{BPSTZ} N. Burq, F. Planchon, J. Stalker, and A.S. Tahvildar-Zadeh, \emph{Strichartz estimates for the wave and Schr\"odinger equations with the inverse-square potential.} J. Funct. Anal. \textbf{203} (2003), 519--549.

\bibitem{CK} M. Christ and A. Kiselev, \emph{Maximal functions associated to filtrations.} J. Funct. Anal. \textbf{179} (2001), 409--425.

\bibitem{CW}  M.~Christ and M.~Weinstein, \emph{Dispersion of small amplitude solutions of the generalized Korteweg--de Vries quation.} J. Funct. Anal. \textbf{100} (1991), 87--109.

\bibitem{CKSTT} J. Colliander, M. Keel, G. Staffilani, H. Takaoka, and T. Tao, \emph{Global well-posedness and scattering for the energy-critical nonlinear Schr\"odinger equation in $\R^3$}. Annals of Math. \textbf{167} (2008), 767--865.

\bibitem{DHR} T. Duyckaerts, J. Holmer, and S. Roudenko, \emph{Scattering for the non-radial 3D cubic nonlinear Schr\"odinger equation.} Math. Res. Lett. \textbf{15} (2008), no. 6, 1233--1250.

\bibitem{DR} T. Duyckaerts and S. Roudenko, \emph{Threshold solutions for the focusing 3D cubic Schr\"odinger equation.}
Rev. Mat. Iberoam. \textbf{26} (2010), no. 1, 1--56. 

\bibitem{Fanelli} L. Fanelli, V. Felli, M. A. Fontelos, and A. Primo, \emph{Time decay of scaling critical electromagnetic Schr\"odinger flows.} Comm. Math. Phys. \textbf{324} (2013), no. 3, 1033--1067.

\bibitem{Glassey} R. T. Glassey, \emph{On the blowing up of solutions to the Cauchy problem for nonlinear Schr\"odinger equations.} J. Math. Phys. \textbf{18} (1977), 1794--1797. 

\bibitem{HR} J. Holmer and S. Roudenko, \emph{A sharp condition for scattering of the radial 3D cubic nonlinear Schr\"odinger equation.} Comm. Math. Phys. \textbf{282} (2008), no. 2, 435--467. 

\bibitem{IP1} A. D. Ionescu and B. Pausader, \emph{Global well-posedness of the energy-critical defocusing NLS on $\R\times\mathbb{T}^3$.} Comm. Math. Phys. \textbf{312} (2012), no. 3, 781--831.

\bibitem{IP2} A. D. Ionescu and B. Pausader, \emph{The energy-critical defocusing NLS on $\mathbb{T}^3$}. Duke. Math. J. \textbf{161} (2012), no. 8, 1581--1612.

\bibitem{IPS} A. D. Ionescu, B. Pausader, and G. Staffilani, \emph{On the global well-posedness of energy-critical Schr\"odinger equations in curved spaces.} Anal. PDE \textbf{5} (2012), no. 4, 705--746.

\bibitem{Jao1} C. Jao, \emph{The energy-critical quantum harmonic oscillator.} Comm. Partial Differential Equations \textbf{41} (2016), no. 1, 79--133.

\bibitem{Jao2} C. Jao, \emph{Energy-critical NLS with potentials of quadratic growth.} Preprint {\tt arXiv:1411.4950}.

\bibitem{KSWW} H. Kalf, U. W. Schmincke, J. Walter, and R. W\"ust, \emph{On the spectral theory of Schr\"odinger and Dirac operators with strongly singular potentials. In Spectral theory and differential equations.} 182--226. Lect. Notes in Math. \textbf{448} (1975) Springer, Berlin.

\bibitem{KeelTao} M. Keel and T. Tao, \emph{Endpoint Strichartz estimates.} Amer. J. Math. \textbf{120} (1998), 955--980.

\bibitem{KM} C. Kenig and F. Merle, \emph{Global well-posedness, scattering, and blow-up for the energy-critical focusing nonlinear Schr\"odinger equation in the radial case.} Invent. Math. \textbf{166} (2006), 645--675.

\bibitem{KKSV} R. Killip, S. Kwon, S. Shao, and M. Visan, \emph{On the mass-critical generalized KdV equation.} DCDS-A \textbf{32} (2012), 191--221.

\bibitem{KMVZZ1} R. Killip, C. Miao, M. Visan, J. Zhang, and J. Zheng, \emph{Multipliers and Riesz transforms for the Schr\"odinger operator with inverse-square potential.} Preprint {\tt arXiv:1503.02716}.

\bibitem{KMVZZ2} R. Killip, C. Miao, M. Visan, J. Zhang, and J. Zheng, \emph{The energy-critical NLS with inverse-square potential}.
Preprint {\tt arXiv:1509.05822}

\bibitem{KOPV} R. Killip, T. Oh, O. Pocovnicu, and M. Visan, \emph{Solitons and scattering for the cubic-quintic nonlinear Schr\"odinger equation on $\R^3$}. Preprint {\tt arXiv:1409.6734}.

\bibitem{KSV} R. Killip, B. Stovall, and M. Visan, \emph{Scattering for the cubic Klein--Gordon equation in two space dimensions.} Trans. Amer. Math. Soc. \textbf{364} (2012), 1571--1631.

\bibitem{KV-clay} R. Killip and M. Visan, \emph{Nonlinear Schr\"odinger equations at critical regularity.} In ``Evolution equations'', 325--437, Clay Math. Proc., \textbf{17}. Amer. Math. Soc., Providence, RI 2013.

\bibitem{KVZ0} R. Killip, M. Visan, and X. Zhang, \emph{Quintic NLS in the exterior of a strictly convex obstacle.} Preprint {\tt arXiv:1208.4904}. To appear in Amer. J. Math.

\bibitem{KVZ} R. Killip, M. Visan, and X. Zhang, \emph{The focusing cubic NLS on exterior domains in three dimensions.} Preprint {\tt arXiv:1501.05062}.  To appear in Appl. Math. Res. eXpress.

\bibitem{LS} V. Liskevich and Z. Sobol, \emph{Estimates of integral kernels for semigroups associated with second order elliptic operators with singular coefficients.} Potential Anal. \textbf{18}
(2003), 359--390.

\bibitem{MS} P. D. Milman and Yu. A. Semenov, \emph{Global heat kernel bounds via desingularizing weights.}  J. Funct. Anal. \textbf{212} (2004), 373--398.

\bibitem{OT} T. Ogawa and Y. Tsutsumi, \emph{Blow-up of $H^1$ solution for the nonlinear Schr\"odinger equation.} J. Differ. Eqns. \textbf{92} (1991),  317--330.

\bibitem{PTW} B. Pausader, N. Tzvetkov, and X. Wang, \emph{Global regularity for the energy-critical NLS on $\mathbb{S}^3$}. Ann. Inst. H. Poincar\'e Anal. Non Lin\'eaire \textbf{31} (2014), no. 2, 315--338.

\bibitem{ReedSimon} M. Reed and B. Simon, \emph{Methods of modern mathematical physics. II. Fourier analysis, self-adjointness}. Academic Press, New York-London, 1975.

\bibitem{Tao} T. Tao, \emph{Nonlinear dispersive equations, local and global analysis.} CBMS Regional Conference Series in
Mathematics, \textbf{106}. Published for the conference Board of the Mathematical Sciences, Washington DC; by the American Mathematical Society, Providence, RI, 2006.

\bibitem{Oberwolfach} M. Visan, \emph{Dispersive Equations}. In ``Dispersive Equations and Nonlinear Waves'', Oberwolfach Seminars \textbf{45}, Birkh\"auser/Springer Basel 2014.


\end{thebibliography}
\end{document}